\numberwithin{equation}{section}
\newtheoremstyle{thmlemcorr}{10pt}{10pt}{\itshape}{}{\bfseries}{.}{10pt}{{\thmname{#1}\thmnumber{ #2}\thmnote{ (#3)}}}
\newtheoremstyle{thmlemcorr*}{10pt}{10pt}{\itshape}{}{\bfseries}{.}\newline{{\thmname{#1}\thmnumber{ #2}\thmnote{ (#3)}}}
\newtheoremstyle{defi}{10pt}{10pt}{\itshape}{}{\bfseries}{.}{10pt}{{\thmname{#1}\thmnumber{ #2}\thmnote{ (#3)}}}
\newtheoremstyle{remexample}{10pt}{10pt}{}{}{\bfseries}{.}{10pt}{{\thmname{#1}\thmnumber{ #2}\thmnote{ (#3)}}}
\newtheoremstyle{ass}{10pt}{10pt}{}{}{\bfseries}{.}{10pt}{{\thmname{#1}\thmnumber{ A#2}\thmnote{ (#3)}}}
\theoremstyle{thmlemcorr}
\newtheorem{theorem}{Theorem}
\numberwithin{theorem}{section}
\newtheorem{lemma}[theorem]{Lemma}
\newtheorem{corollary}[theorem]{Corollary}
\newtheorem{proposition}[theorem]{Proposition}
\theoremstyle{thmlemcorr*}
\newtheorem{theorem*}{Theorem}
\newtheorem{lemma*}[theorem]{Lemma}
\newtheorem{corollary*}[theorem]{Corollary}
\newtheorem{proposition*}[theorem]{Proposition}
\newtheorem{problem*}[theorem]{Problem}
\newtheorem{conjecture*}[theorem]{Conjecture}
\theoremstyle{defi}
\newtheorem{definition}[theorem]{Definition}
\theoremstyle{remexample}
\newtheorem{remark}[theorem]{Remark}
\newenvironment{claim}[1]{\par\noindent\underline{Claim:}\space#1}{}
\theoremstyle{ass}
\newcommand{\Crm}{\mathrm{C}}
\newcommand{\Lrm}{\mathrm{L}}
\newcommand{\Wrm}{\mathrm{W}}
\DeclareMathOperator{\curl}{curl}
\DeclareMathOperator{\dist}{dist}
\DeclareMathOperator{\supp}{supp}
\newcommand{\setBB}[2]{\biggl\{\, #1 \ \ \textup{\textbf{:}}\ \ #2 \,\biggr\}}
\newcommand{\dd}{\;\mathrm{d}}
\newcommand{\R}{\mathbb{R}}
\newcommand{\toweakstar}{\overset{*}\rightharpoonup}
\newcommand{\BV}{\mathrm{BV}}
\def\XXint#1#2#3{{\setbox0=\hbox{$#1{#2#3}{\int}$} 
\vcenter{\hbox{$#2#3$}}\kern-.5\wd0}}
\renewcommand{\phi}{\varphi}
\newcommand{\M}{\mathcal M}
\newcommand{\wc}{\rightharpoonup}
\newcommand{\dx}{\,\mathrm{d}x}
\DeclareMathOperator{\e}{\varepsilon}
\DeclareMathOperator{\hex}{H_{ex}}
\DeclareMathOperator{\Per}{Per}
\DeclareMathOperator{\A}{\mathscr A}
\DeclareMathOperator{\B}{\mathscr B}
\DeclareMathOperator{\Dev}{Dev}
\DeclareMathOperator{\Id}{id}
\renewcommand{\ker}{\textnormal{Ker}}
\title[Regularity of free-interface variational problems]{Regularity for free interface variational problems in a general class of gradients} 
\author[A.~Arroyo-Rabasa]{Adolfo Arroyo-Rabasa}
\address{\textit{Adolfo Arroyo-Rabasa:} Institut f\"ur Angewandte Mathematik, Universit\"at Bonn, 53115 Bonn, Germany.}
\email{adolfo.arroyo.rabasa@hcm.uni-bonn.de}
\begin{document}

\maketitle

\begin{abstract}
We present a way to study a wide class of optimal design problems with a perimeter penalization. 
More precisely, we address existence and regularity properties
of saddle points of energies of the form
\[
 (u,A) \quad \mapsto \quad \int_\Omega 2fu \dd x \; - \int_{\Omega \cap A}
 \sigma_1\A u\cdot \A u \, \dd x \; - \int_{\Omega \setminus A} \sigma_2\A u\cdot \A u \, \dd x \; + \; \text{Per}(A;\overline \Omega),
\]
where $\Omega$ is a  bounded Lipschitz domain, $A\subset \R^N$ is a Borel set, $u:\Omega \subset \R^N \to \R^d$, $\A$ is an operator of gradient form, and
$\sigma_1, \sigma_2$ are two not necessarily well-ordered symmetric tensors.
The class of operators of gradient form includes scalar- and vector-valued gradients, 
symmetrized gradients, and higher order gradients. Therefore, our results may be applied to a wide range 
of problems in elasticity, conductivity or plasticity models. 

In this context and under mild assumptions on $f$, we show for a solution $(w,A)$, that
the topological boundary of $A \cap \Omega$ is locally a $\Crm^1$-hypersurface up to a closed set of zero $\mathcal H^{N-1}$-measure. 
\vspace{4pt}

\noindent\textsc{MSC (2010):} 49J20, 49J35, 49N60, 49Q20 (primary); 49J45 (secondary).

\vspace{4pt}

\noindent\textsc{Keywords:} partial regularity, almost perimeter minimizer, optimal design problem.
\vspace{4pt}

\noindent\textsc{Date:} \today{}.
\end{abstract}

\section{Introduction}
\label{sec1:introduction}
The problem of finding optimal designs involving two materials goes back to the work of Hashin and Shtrikman. In \cite{Hash63}, the authors made the first successful attempt to derive the optimal bounds of a composite material. It was later on, in the series of papers \cite{KSI,KSII,KSIII}, that Kohn and Strang 
described the connection between composite materials, 
the method of relaxation, and the homogenization theory developed by Murat and Tartar \cite{MurTar85,MurTar85ii}. 
In the context of homogenization, {\it better} designs tend to develop finer and finer geometries; a process which results in the creation of
non-classical designs. One way to avoid the mathematical abstract of infinitely fine mixtures is to add a cost on the interfacial energy. 
In this regard, there is a large amount of optimal design problems that involve an interfacial energy and a Dirichlet energy. The study of regularity properties in this setting has been mostly devoted to problems where the Dirichlet energy is related to a scalar elliptic equation; see \cite{AmbrosioButtazzo93,Lin93,LinKohn99,Larsen13,FuscoJulin15,DPFig15}, where partial $\Crm^1$-regularity on the interface is shown for an optimization problem oriented to find dielectric materials of maximal conductivity.
We shall study regularity properties of similar problems in a rather general framework. Our results extend the aforementioned results to linear elasticity and linear plate theory models. 

Before turning to a precise mathematical statement of the problem let us first present the model in linear plate theory that motivated our results.
Let $\Omega = \omega \times [-h,h]$ be the reference configuration of a plate of thickness $2h$ and
cross section $\omega \subset \R^2$. The linear equations governing a clamped plate $\Omega$ as 
$h$ tends to zero for the Kirchhoff model are
\begin{equation}\label{eqn:plate}
 \begin{cases}
  \nabla \cdot \nabla \cdot \big(\sigma \nabla^2 u\big) = &  f \quad \quad \text{in} \; \omega,\\
  \hfill \partial_\nu u = u = & 0 \hfill \quad \text{in} \; \partial \omega,
 \end{cases}
\end{equation}
where $u : \omega \to \R$ represents the displacement of the plate with respect to a vertical load
$f \in \Lrm^\infty(\omega)$, and the design of the plate is described by a symmetric positive definite
fourth-order tensor $\sigma$ (up to a cubic dependence on the constant $h$). Here, we denote the second gradient by
\[
 \nabla^2 u \coloneqq \left(\frac{\partial^2 u}{\partial x_i \partial x_j}\right)_{ij}, \quad i,j = 1,2.
\]
Consider the physical problem of a thin plate $\Omega$ made-up of two elastic
materials. More precisely,
for a given set $A \subset \omega \subset \R^2$ we define the symmetric positive tensor 
\[\sigma_A(x) \coloneqq  \mathds {1}_A \sigma_1 + (1 -  \mathds 1_A)\sigma_2,\]
where 
$\sigma_1, \sigma_2 \in 
\text{Sym}(\R^{2 \times 2},\R^{2 \times 2})$.
In this way, to each Borel subset $A \subset \omega$, there corresponds a displacement $u_A :\omega \to \R$ solving equation (\ref{eqn:plate}) with 
$\sigma = \sigma_A$. One measure of the rigidity of the plate is the so-called compliance, i.e., the work done by the loading. The smaller the
compliance, the stiffer the plate is. A reasonable optimal design model consists in finding the most rigid design $A$ under the aforementioned costs. One seeks to minimize
an energy of the form
\[
A \mapsto \int_\omega \sigma_A \nabla^2 u_A \cdot \nabla^2 u_A \, \dd x~ + ~\Per(A;\omega), \qquad \text{among Borel subsets $A$ of}\; \R^2.
\]
Optimality conditions for a stiffest plate can be derived by taking local variations on the design. 
For such analysis to be meaningful, one has to ensure first that the variational equations of optimality have a suitable meaning {\it in} the interface. 
Hence, it is natural to ask for the maximal possible regularity of $\partial A$ and
$\nabla^2 u_A$.

We will introduce a more general setting where one can replace the second gradient $\nabla^2$ by an operator $\A$ of {\it gradient type} (see
Definition \ref{gto} and the subsequent examples in the next section for a precise description of this class).

\subsection{Statement of the problem} Let $N \ge 2$, and let $d,k$ be positive integers. We shall work in $\Omega \subset \R^N$; a nonempty, open, and bounded Lipschitz domain.
We also fix a function $f \in \Lrm^\infty(\Omega;\R^d)$ and let $\sigma_1$ and $\sigma_2$ be two positive definite tensors in $\text{Sym}(\R^{dN^k} \otimes \R^{dN^k})$ satisfying a strong pointwise G{\aa}rding inequality: there exists a positive constant $M$ such that
\begin{equation}\label{eq:ellipticity}
\frac{1}{M} | P|^2  \le \sigma_i \, P\cdot  P\le M | P|^2 \qquad \text{for all $P \in \R^{dN^k}, \quad  i \in \{1,2\}$}.
\end{equation}
For a fixed Borel set $A \subset \R^N$, define the two-point valued tensor
\begin{equation}\label{eq:definition}
\sigma_A(x) \coloneqq  \mathds {1}_A \sigma_1 + \mathds1_{(\Omega \setminus A)}\sigma_2.
\end{equation} 

We consider a $k$-homogeneous linear differential operator
$\A : \Lrm^2\left(\Omega;\R^d\right) \to \Wrm^{-k,2}(\Omega;\R^{dN^k})$ of gradient form (see Definition \ref{gto} in Section \ref{preliminaries}).
As a consequence of the definition of operators of gradient form, the following equation 
 \begin{equation}\label{state eq}
\A^* (\sigma_A \A u) = f \quad \text{in $\mathcal D'(\Omega;\R^d)$}, \qquad    u \in \Wrm_0^{\A}(\Omega) \subset \Wrm^{k,2}_0(\Omega;\R^d),\end{equation}
has a unique solution (cf. Theorem \ref{existence}). We will refer to equation (\ref{state eq}) as the {\it state constraint} and we will denote 
by $w_A$ its unique solution. 

It is a physically relevant question to ask which designs have the least dissipated energy.
To this end, consider the energy defined as
 \[A \; \mapsto \; E(A) \coloneqq \int_\Omega fw_A \, \dx x~ + ~\text{Per}(A;\overline\Omega) \qquad \text{among Borel subsets $A$ of $\R^N$}. \footnote{Here, $\Per(A;\overline \Omega) = |\mu_A|(\overline \Omega)$, where $\mu_A$ is the Gauss-Green measure of $A$; see Section \ref{sec:gmt}.}\]
We will be interested in the optimal design problem with Dirichlet boundary conditions on sets: 
\begin{equation}\label{intro1}
\text{minimize} \qquad \left\{\; E(A) \; : \; A \subset \R^N \; \text{ is a Borel set}, ~A\cap \Omega ^c \equiv A_0 \cap \Omega^c \: \right\},\footnote{Due to the nature of the problem, we cannot replace $\Per(A;\overline \Omega)$  with $\Per(A;\Omega)$ in $E(A)$ because it possible that minimizing sequences tend to accumulate perimeter in $\partial \Omega$.}
\end{equation}
where $A_0 \subset \R^N$ is a set of locally finite perimeter.

Most attention has been drawn to the case where designs are mixtures of two well-ordered materials. The presentation given here places no comparability hypotheses on $\sigma_1$ and $\sigma_2$. 
Instead, we introduce a weaker condition on the decay of generalized minimizers of a double-well problem. Our technique also holds under various constraints other than Dirichlet boundary conditions; in particular, any additional cost that scales as $O(r^{N -1 + \varepsilon})$. For example, a constraint on the volume occupied by a particular material (cf. \cite{Lin93,Carozza14,DPFig15}). Lastly, we remark that our technique is robust enough to treat models involving the {\it maximization} of dissipated energy.

\subsection{Main results and background of the problem}
Existence of a minimizer of \eqref{intro1} can be established by standard methods. We are interested in proving that a solution pair $(w_A,A)$ enjoys  better 
{\it regularity} properties 
than the ones needed for existence. The notion of regularity for a set $A$ will be understood as the local regularity of
$\partial A$ seen as
a submanifold of $\R^N$, whereas the notion of regularity for $w_A$ will refer to its differentiability and integrability properties.  

It can be seen from the energy, that the deviation from being a perimeter minimizer for a solution $A$ of problem (\ref{intro1})
is bounded by the dissipated energy. Therefore, one may not expect better regularity properties for $A$ than the ones 
for perimeter minimizers; and thus, one may only expect regularity up to singular set (we refer the reader to~\cite{DEGio61,Alm68} for classic results, see also \cite{DPFig15} for a partial regularity result in a similar setting to ours). 

Since a constrained problem may be difficult to treat, we will instead consider an equivalent variational
unconstrained problem by introducing a multiplier as follows. Consider the saddle point problem 
\begin{equation*}\label{P}
 \inf_{A\subset \Omega} \sup_{u \in \Wrm^{\A}_0(\Omega)} I_\Omega(u,A), \tag{P}
\end{equation*}
where
\[
I_\Omega(u,A) \coloneqq \int_\Omega 2fu \, \dx x~ -~ \int_\Omega \sigma_A \A u \cdot \A u \, \dx x~ +~ \text{Per}(A;\overline \Omega).
\]
Our first result shows the equivalence between problem \eqref{P} and the minimization problem (\ref{intro1}) under the state constraint (\ref{state eq}): 
\begin{theorem}[existence]\label{existence} There exists a solution $(w,A)$ of 
problem \eqref{P}. Furthermore, there is a one to one correspondence 
\[
 (w,A) \mapsto (w_A,A) 
\]
between solutions to problem \eqref{P}  and the minimization problem
(\ref{intro1}) under the state constraint (\ref{state eq}).
\end{theorem}

We now turn to the question of regularity. Let us depict an outline of the key steps and results obtained in this regard. The Morrey space $\Lrm^{p,\lambda}(\Omega;\R^d)$ is the subspace of $\Lrm^p(\Omega;\R^d)$ for which the semi-norm
\[
[u]^p_{\Lrm^{p,\lambda}(\Omega)} \coloneqq \sup\left\{ \frac{1}{r^\lambda}\int_{B_r(x)} | u |^p \dd y: B_r(x) \subset \Omega \right\}, \qquad 0 < \lambda \le N,
\]
is finite.

The {\it first step} in proving regularity for solutions $(w,A)$ consists in proving a critical $\Lrm^{2,\,N-1}$ local estimate
for $\A w$. This estimate arises na\-tu\-ra\-lly since we expect a kind of balance between $\int_{B_r(x)} \sigma_A \A w \cdot \A w \dd y$ and the perimeter part $\Per(A;B_r(x))$ that scales as $r^{ N-1}$ in balls of radius $r$. 

To do so, let us recall a related relaxed problem. 
 As part of the assumptions on $\A$ there must exist a constant rank, $m$-order differential operator 
 $\B :  \Lrm^2(\Omega;Z) 
 \to \Wrm^{-m,2}(\Omega;\R^n)$ with 
 $\text{Ker}(\B) = \A[\Wrm^{\A}(\Omega)]$. \footnote{Here, $\Wrm^{\A}(\Omega) = \big\{u \in \Lrm^2(\Omega;\R^d) : \A u \in \Lrm^2(\Omega;\R^{dN^k}) \big\}$ is the $\A$-Sobolev space of $\Omega$.} 
 It has been shown by Fonseca and M\"uller \cite{FonsecaMuller99}, that a necessary and sufficient condition for the lower semi-continuity of 
 integral energies with superlinear growth under a constant rank differential constraint $\B v = 0$ is the $\B$-quasiconvexity of the integrand.
 In this context, the $\B$-free quasiconvex envelope of the double-well $W(P)\coloneqq \min\{\sigma_1 \, P \cdot P,\,\sigma_2\, P \cdot P\}$, at a point 
 $P\in Z \subset \R^{dN^k}$, is given by
 \begin{align*}
  Q_{\B} W(P) & \coloneqq \inf\setBB{\int_{[0,1]^N} W(P + v(y)) \dd y}{ \\ & v \in \Crm^\infty_{\text{per}}\big([0,1]^N;Z\big), \B v = 0 \; \text{and} \,
  \int_{[0,1]^N} v(y) \dd y = 0}.
 \end{align*}
The idea is to get an $\Lrm^{2,\,N-1}$ estimate by transferring the regularizing effects from generalized minimizers of the energy
$u \mapsto \int_{B_1} W(\A u)$ onto our original problem. In order to achieve this, we use a $\Gamma$-convergence argument with respect to
a perturbation in the interfacial energy from which the next result follows:
\begin{theorem}[upper bound]\label{thm2} 
Let $(w,A)$ be a variational solution of problem \eqref{P}.
Assume that
the higher integrability condition 
 \begin{equation}\label{eq:assumption}
  [\A \tilde u]^2_{\Lrm^{2,N-\delta}(B_{1/2})} \le c\|\A \tilde u \|_{\Lrm^2(B_1)}^2, \quad \text{for some $\delta \in [0,1)$ and some positive constant $c$}, \tag{Reg}
 \end{equation}
 holds for local minimizers of the energy $u \mapsto \int_{B_1} Q_{\B} W(\A u)$, where $u \in \Wrm^{\A}(\Omega)$.
  Then, for every compactly contained set $K \subset \subset \Omega$, there exists a positive constant $ \Lambda_K$ such that
\begin{equation}\label{upper}
 \int_{B_r(x)} \sigma_A \A w \cdot \A w \; \dd y \; + \; \Per(A;B_r(x)) \le \Lambda_K r^{N-1},
\end{equation}
\text{for all $x \in K$ and every $r \in (0,\dist(K,\partial \Omega))$}.
\end{theorem}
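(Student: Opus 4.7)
The target estimate has the scaling of a perimeter penalty, so the strategy is to compare the saddle point $(w,A)$ locally on a ball $B_r(x_0)$ with a competitor built from the $\B$-quasiconvex relaxation of the double-well $W$, and then to transfer the Morrey-type decay (Reg) from the relaxed minimizer to the original energy through a Campanato iteration between scales.

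\emph{Step 1: relaxed minimizer on a ball and its decay.} Fix $x_0\in K$ and $r<\dist(K,\partial\Omega)$, and let $\tilde u_r$ minimize $v\mapsto \int_{B_r(x_0)}Q_\B W(\A v)\,\dd y$ subject to $v-w\in\Wrm^\A_0(B_r(x_0))$. Since $Q_\B W(P)\le W(P)\le \sigma_A P\cdot P$ pointwise, testing with $v=w$ yields
\[
 \int_{B_r(x_0)}Q_\B W(\A\tilde u_r)\,\dd y \;\le\;\int_{B_r(x_0)}\sigma_A\A w\cdot\A w\,\dd y,
\]
and ellipticity of $Q_\B W$ gives $\int_{B_r(x_0)}|\A\tilde u_r|^2 \le c\int_{B_r(x_0)}\sigma_A|\A w|^2$. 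Scaling (Reg) from $B_1$ to $B_r(x_0)$ via the $k$-homogeneity of $\A$ then produces the Morrey decay
\[
 \int_{B_\rho(x_0)}|\A\tilde u_r|^2\,\dd y\;\le\;c\,(\rho/r)^{N-\delta}\int_{B_r(x_0)}|\A\tilde u_r|^2\,\dd y, \qquad 0<\rho\le r/2.
\]

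\emph{Step 2: $\Gamma$-convergence competitor and saddle comparison.} By the Fonseca--M\"uller relaxation theory for $\B$-free fields, there is an approximating sequence $(u_n,A_n)$ with $u_n=w$ near $\partial B_r(x_0)$, $A_n\setminus B_r(x_0)=A\setminus B_r(x_0)$, and $\int_{B_r(x_0)}\sigma_{A_n}|\A u_n|^2 \to \int_{B_r(x_0)}Q_\B W(\A\tilde u_r)$. Since $\Per(A_n;\bar B_r(x_0))$ is a priori not controlled, $(u_n,A_n)$ cannot be plugged directly into the saddle-point inequality; here I would invoke the $\Gamma$-convergence argument announced in the excerpt, perturbing the interfacial penalty $\Per$ by a small parameter $\eta$ and passing to the limit after applying minimality within each $\eta$-family. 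Combined with the minimality $E(A)\le E(A_n)$ and a duality bound relating $\int f(w_A-w_{A_n})$ to a local integral of the form $\int_{B_r(x_0)}|\sigma_A-\sigma_{A_n}|\cdot|\A w|^2$ (obtained by testing the two state equations $\A^*(\sigma_\cdot \A w_\cdot)=f$ against $w_A-w_{A_n}$ and using the symmetric positive-definiteness of $\sigma_A,\sigma_{A_n}$), this yields the key inequality
\[
 \int_{B_r(x_0)}\sigma_A\A w\cdot\A w\,\dd y \;+\;\Per(A;B_r(x_0))\;\le\;\int_{B_r(x_0)}Q_\B W(\A\tilde u_r)\,\dd y\;+\;Cr^{N-1},
\]
the $Cr^{N-1}$ term collecting the perimeter contributions on $\partial B_r(x_0)$.

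\emph{Step 3: iteration; main obstacle.} Writing $\Psi(\rho)\coloneqq \int_{B_\rho(x_0)}\sigma_A|\A w|^2+\Per(A;B_\rho(x_0))$, one uses $\tilde u_r$ as a competitor (after a cutoff across a thin annulus $B_\rho\setminus B_{\rho/2}$) for the variational problem defining $\tilde u_\rho$. Steps~1 and 2 then combine to give a decay of the form
\[
 \Psi(\rho) \;\le\; c\,(\rho/r)^{N-\delta}\Psi(r) \;+\;C\rho^{N-1} \;+\; (\text{annular error}),
\]
and since $\delta<1$ the exponent $N-\delta$ is supercritical with respect to $N-1$; a standard hole-filling/Campanato iteration between dyadic scales absorbs the extra terms and delivers the uniform bound $\Psi(r)\le \Lambda_K r^{N-1}$ for $x_0\in K$. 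The main technical obstacle is Step~2: the relaxation of $\int\sigma_\cdot|\A\cdot|^2$ down to $\int Q_\B W$ inherently demands ever finer mixtures in the design, so $\Per(A_n)\to \infty$ along any admissible approximating sequence. Making the comparison with $(u_n,A_n)$ rigorous in the original, non-perturbed saddle-point problem is precisely where the perimeter-perturbed $\Gamma$-convergence enters, and careful tracking of the competitor's dependence on the perturbation parameter is required so as not to lose the sharp $r^{N-1}$ scaling.
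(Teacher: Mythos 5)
The paper does not prove \eqref{upper} by the direct Campanato comparison you sketch; it proves a \emph{dichotomy} (Proposition~\ref{12}: either $J_{B_r(x)}\le C(K)r^{N-1}$ already, or $J_{B_{\rho r}(x)}\le\rho^{N-(1+\delta)/2}J_{B_r(x)}$) via a blow-up/contradiction argument, and only then invokes the Iteration Lemma. That architecture is not a stylistic choice: it is precisely what makes the $\Gamma$-convergence machinery applicable, and your proposal collapses at exactly the point where you try to replace it.

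The gap is your Step~2. You correctly observe that any sequence $(u_n,A_n)$ realizing $\int Q_\B W(\A\tilde u_r)$ as a limit of $\int\sigma_{A_n}\A u_n\cdot\A u_n$ necessarily has $\Per(A_n;B_r)\to\infty$, so the saddle-point inequality applied to $(u_n,A_n)$ is vacuous; you then propose to fix this by ``perturbing the interfacial penalty by a small parameter $\eta$ and passing to the limit,'' but in the original problem the perimeter coefficient is fixed at $1$ and there is no external small parameter to vary. The parameter $\varepsilon$ for which $G^\varepsilon\stackrel{\Gamma}{\to}G$ (Theorem~\ref{gamma}) does not come from perturbing the problem; it is produced by the blow-up normalization. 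In the paper one negates the dichotomy, extracts $(x_h,r_h)$ with $r_h\downarrow 0$, rescales to $B_1$, and \emph{normalizes} by $\varepsilon(h)^2:=r_h^{N-1}/J_{B_{r_h}(x_h)}(\A w,A)$. The negation of alternative~(1) forces $\varepsilon(h)^2=O(h^{-1})\to 0$, so the perimeter term in the rescaled functional acquires a coefficient $\varepsilon(h)^2$ \emph{for free}, and the rescaled solutions become a sequence of almost-minimizers of $G^{\varepsilon(h)}$ with unit total energy. Only at this point does Corollary~\ref{cuchara} apply, yielding a limit $\tilde w$ that locally minimizes $G(\A\cdot)$, to which \eqref{eq:assumption} is applied; the contradiction with the negation of alternative~(2) then closes the argument. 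Without the blow-up you never manufacture the vanishing perimeter coefficient, and so the $\Gamma$-limit is simply not the functional $G$ on the relevant scale.

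A secondary issue in Step~3: $\tilde u_r$ is defined with boundary data matching $w$ on $\partial B_r(x_0)$, so using it as a competitor at scale $\rho<r$ forces a cutoff across an annulus, and the resulting ``annular error'' is not small without an a priori equi-integrability of $\A w$ near $\partial B_\rho$ — which is exactly the kind of information one is trying to \emph{prove}. In the paper this circularity is avoided because the decay estimate is extracted once, at the blow-up limit, from a genuine local minimizer of $G$, rather than iterated scale-by-scale on approximating competitors. I would recommend restructuring your argument around the dichotomy of Proposition~\ref{12} and the normalization $\varepsilon(h)^2=r_h^{N-1}/J_{B_{r_h}}$; your Step~1 is then essentially the role played by \eqref{eq:contramadre} in the final contradiction.
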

\begin{remark}[well-ordering assumption]
 If $\sigma_1, \sigma_2$ are well-ordered, say $\sigma_2 - \sigma_1$ is positive definite, then $Q_{\B}W$ is precisely the quadratic form $\sigma_2\, P \cdot P$. Due to standard elliptic regularity results (cf. Lemma \ref{lem:cosntant}), estimate (\ref{eq:assumption}) holds for $\delta =0$; therefore, assuming that the materials are well-ordered is a sufficient condition for the higher integrability assumption (\ref{eq:assumption}) to hold.
\end{remark}
\begin{remark}[non-comparable materials]
 In dimensions
 $N = 2, 3$ and restricted to
 the setting $\A = \nabla$, $d = 1$, condition  (\ref{eq:assumption}) is strictly weaker than assuming the materials to be well-ordered. Indeed, one can argue by a Moser type iteration as in 
 \cite{CarMull02} to
 lift the regularity of minimizers. For higher-order gradients or in the case of systems
 it is not clear to us whether assumption \eqref{eq:assumption} is equivalent to the well-ordering of the materials. 
\end{remark}

The {\it second step}, consists of proving a {\it 
discrete monotonicity}
for the excess of the Dirichlet energy on balls under a low perimeter density assumption. More precisely, on the function that assigns 
\[
 r \quad \mapsto \quad \frac{1}{r^{N-1}}\int_{B_r(x)} |\A w|^2 \, \dx x, \qquad x \in \partial A, ~r > 0.
\]
The discrete monotonicity of the map above, together with the upper bound estimate (\ref{upper}), will allow us to prove a local 
{lower bound} $\lambda_K$ on the density of 
the perimeter:
\begin{equation*}\label{lower}
 \tag{LB} \frac{\Per(A;B_r(x))}{r^{N-1}} \ge \lambda_K \qquad \text{for every $x \in (K\cap \partial A)$, and every $0 < r \le r_K$}.
\end{equation*}

As usual, the lower bound on the density of the perimeter is the cornerstone to prove regularity of almost perimeter minimizers. In fact, once the estimate \eqref{lower} is proved we simply apply the excess improvement results of \cite[Sections 4 and 5]{Lin93} to obtain our main result:

\begin{theorem}[partial regularity]\label{main} Let $(w,A)$ be a saddle point of problem \eqref{P} in $\Omega$. 
	Assume that the operator $P_Hu = \A^*(\sigma_H \A u)$ is hypoelliptic and regularizing for the half-space problem (see properties \eqref{eq:hypo}-\eqref{eq:halfplane}), and that the higher integrability \eqref{eq:assumption} holds. Then there exists a positive constant $\eta \in (0,1]$ depending only on $N$ such that 
	\[
	\mathcal H^{N-1}((\partial A \setminus \partial^* A) \cap \Omega) = 0, \quad \text{and} \quad \partial^* A \quad \text{is an open 
		$\Crm^{1,\eta/2}$-hypersurface in $\Omega$}.
	\]
	Moreover if $\A$ is a first-order partial differential operator, then $\A w \in \Crm_{\textnormal{loc}}^{0,\eta/8}(\Omega \setminus (\partial A \setminus \partial^* A))$;
	and hence, the trace of $\A w$ exists on either side of $\partial^* A$.
\end{theorem}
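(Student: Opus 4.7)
The overall plan is to establish the $\Crm^{1,\eta/2}$-regularity of $\partial^* A$ by exhibiting $A$ as an almost perimeter minimizer with a H\"older-type modulus, after which the excess-improvement scheme of \cite[Sections 4 and 5]{Lin93} applies verbatim. Theorem \ref{thm2} already supplies the upper density bound \eqref{upper}, so the task reduces to establishing the matching lower perimeter density bound \eqref{lower} at points of $\partial A \cap K$; the $\Crm^{0,\eta/8}_\loc$-regularity of $\A w$ in the first-order case will then follow from a standard transmission-type argument on the $\Crm^{1,\eta/2}$-interface.

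To prove \eqref{lower} I would argue by contradiction at a point $x_0 \in \partial A \cap K$: suppose $r^{1-N}\Per(A;B_r(x_0))$ is smaller than a threshold $\lambda$ to be chosen. The relative isoperimetric inequality then forces $A$ to occupy either almost all or almost none of $B_r(x_0)$; without loss of generality take the latter case and set $\sigma_H \coloneqq \sigma_2$. Introduce the constant-coefficient replacement $h$ solving $\A^*(\sigma_H \A h) = f$ in $B_r(x_0)$ with $h = w$ on $\partial B_r(x_0)$. The hypoellipticity hypothesis \eqref{eq:hypo} on $P_H u = \A^*(\sigma_H \A u)$, combined with a Caccioppoli-type inequality for operators of gradient form, gives the sharp decay
\[
 \int_{B_\rho(x_0)}|\A h|^2 \dd y \;\le\; C(\rho/r)^N \int_{B_r(x_0)}|\A h|^2 \dd y, \qquad 0 < \rho \le r/2.
\]
Testing the difference of the two equations against $w - h$ yields the comparison
\[
 \int_{B_r(x_0)}\sigma_A \A(w-h)\cdot \A(w-h)\dd y \;\le\; C\int_{A \cap B_r(x_0)} |\sigma_1-\sigma_2||\A h|^2 \dd y,
\]
whose right-hand side is controlled by the minority-phase volume $|A \cap B_r(x_0)|$ and hence by a power of $r^{1-N}\Per(A;B_r(x_0))$ through the isoperimetric inequality. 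Combining these two estimates produces a discrete monotonicity for $r \mapsto r^{1-N}\int_{B_r(x_0)}|\A w|^2$ under the standing smallness assumption. Testing the saddle-point relation for $(w,A)$ against the competitor $A'$ that removes the minority phase from $B_{r/2}(x_0)$ then contradicts $x_0 \in \partial A$, since the resulting energy change is dominated by the now-small $\Per(A;B_r(x_0))$ and the $o(r^{N-1})$ Dirichlet remainder.

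With both \eqref{upper} and \eqref{lower} in hand, $A$ is an $(\omega,r_0)$-almost perimeter minimizer on each $K \Subset \Omega$ with $\omega(r) = Cr^\eta$ for some $\eta \in (0,1]$ depending only on $N$. The $\varepsilon$-regularity and excess-decay arguments of \cite[Sections 4 and 5]{Lin93} depend on $A$ only through these density bounds and apply unchanged, yielding $\mathcal H^{N-1}((\partial A \setminus \partial^* A) \cap \Omega) = 0$ together with the $\Crm^{1,\eta/2}$-regularity of $\partial^* A$. For the final assertion, when $\A$ is first-order I would straighten $\partial^* A$ near any regular point by a $\Crm^{1,\eta/2}$-diffeomorphism so that $w$ becomes a weak solution of a constant-coefficient elliptic transmission problem across a $\Crm^{1,\eta/2}$-hypersurface. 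A Campanato-type iteration — freezing the coefficients on each side and using the half-space regularization \eqref{eq:halfplane} to get decay of the Dirichlet excess on each half-ball — yields $\A w \in \Crm^{0,\eta/8}_{\loc}$ locally up to the interface from either side, from which the two-sided trace exists.

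The main obstacle is the lower-bound step: in the scalar case treated in \cite{Lin93,DPFig15} the comparison with the harmonic replacement is a direct PDE computation, but for a general gradient-form operator $\A$ with possibly non-comparable tensors $\sigma_1,\sigma_2$ both the sharp decay for $h$ and the $\Lrm^2$-estimate for $w-h$ require the full strength of \eqref{eq:hypo} and \eqref{eq:halfplane} — which is precisely why these structural assumptions appear in the theorem.
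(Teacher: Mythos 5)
Your proposal has a genuine gap at the pivotal step, namely the passage from the two density bounds \eqref{upper} and \eqref{lower} to an almost-perimeter minimizer with $\omega(r) = C r^\eta$. The upper bound \eqref{upper} only yields $\Dev_{\Omega}(A,r) \le 2M\Lambda_K r^{N-1}$, i.e.\ a bounded deviation, not a power-decaying one. To reach Tamanini's threshold $\Dev_{\Omega}(A,r) \le c(x)\,r^{N-1+2\eta}$ one must improve the Dirichlet energy density from $O(1)$ to $O(r^\eta)$, and this improvement is precisely what the flatness excess improvement scheme (the paper's Lemma \ref{key2}, which is an adaptation of \cite[Sections 4--5]{Lin93}) delivers. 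That scheme is not a black box that runs on the density bounds alone: it requires decay estimates for the half-space two-phase operator $P_H$, which is exactly where the hypotheses \eqref{eq:hypo}--\eqref{eq:halfplane} enter (via the paper's Lemma \ref{halfplanedecay} and the Height bound). Your sentence ``The $\varepsilon$-regularity and excess-decay arguments of \cite[Sections 4 and 5]{Lin93} depend on $A$ only through these density bounds and apply unchanged'' is therefore false, and it is the crux of the theorem.

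This error propagates into a second issue: you claim the half-space assumptions \eqref{eq:hypo}--\eqref{eq:halfplane} are the engine of the lower-bound step, but that step in fact only requires interior regularity for the \emph{constant-coefficient} operator $\A^*(\sigma_i\A\cdot)$, which is automatic from the gradient-form structure (the paper's Lemma \ref{lem:cosntant}). The half-space hypotheses are about the genuinely two-phase operator $\A^*(\sigma_H\A\cdot)$ with $\sigma_H = \sigma_1\mathds1_H + \sigma_2\mathds1_{-H}$, and they are needed only downstream: for the flatness excess improvement and for the final $\Crm^{0,\eta/8}$-regularity of $\A w$ across a straightened interface (the paper's Lemma \ref{lem:R}, Corollary \ref{cor:R}, and the Campanato iteration). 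Your lower-bound sketch is also missing the cone-like comparison (the paper's Lemma \ref{mueller}), which controls $\frac{\dd}{\dd r}\big(\rho^{1-N}\Per(A;B_\rho)\big)$ in terms of $D(r)/r$; without a perimeter term in the discrete monotonicity (the paper monotonizes the full weighted excess $E_\gamma = D + \gamma\,r^{1-N}\Per$), you cannot propagate the low-perimeter-density hypothesis to smaller scales, so the iteration you describe is not closed. The final-step idea (Campanato iteration across a $\Crm^{1,\eta/2}$ interface, using half-space regularization) is in the right spirit and is essentially what the paper does via $R_A u = (\nabla' u, A_N^T\sigma_A\A u)$ and the jump condition \eqref{eq:jump}, though you would still need the invertibility of $A_N^T\sigma_i A_N$ to recover $\partial_N w$ from $R_A w$ on each side.
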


Let us make a quick account of previous results. To our knowledge, only optimal design problems modeling the maximal dissipation of energy have been treated. 

In \cite{AmbrosioButtazzo93} 
Ambrosio 
and Buttazzo considered the case where $\A = \nabla$ is the gradient operator for scalar-valued ($d = 1$) functions and 
where $\sigma_2 \ge \sigma_1$ in the sense of quadratic forms. 
The authors proved existence of solutions and showed that, up to choosing a good representative, the topological boundary is the closure of the reduced boundary and
$\mathcal H^{N-1}(\partial A \setminus \partial^*A) = 0$.
Soon after, Lin \cite{Lin93}, and Kohn and Lin \cite{LinKohn99} proved, in the same 
case,
that $\partial^* A$ is an open $\Crm^1$-hypersurface. From this point on, there have been
several contributions aiming to discuss the optimal regularity of the interface for this particular case. In this regard
and in dimension $N =2$, 
Larsen \cite{Larsen13} proved that connected components of $A$ are $\Crm^1$ away from the boundary.
In arbitrary dimensions, Larsen's argument cannot be further generalized because it relies on the fact that convexity and positive curvature 
are equivalent in dimension $N =2$.
During the time this project was developed, we have learned that Fusco and Julin \cite{FuscoJulin15} found a different proof for the same results as stated in
\cite{Lin93}; besides this, De Philippis and Figalli \cite{DPFig15} recently obtained an improvement on the dimension of the singular set ($\partial^*A \setminus \partial A$).

The paper is organized as follows. In the beginning of Section \ref{preliminaries} we fix notation and discuss some facts of linear operators, Young measures and sets of finite perimeter. 
We also give the precise
definition of gradient type operators and include a compensated compactness result that will be employed throughout the paper. In Section \ref{exsol} 
we show the equivalence of the constrained problem (\ref{state eq})-(\ref{intro1}) and the unconstrained problem \eqref{P} (Theorem \ref{existence}). In the first part of Section \ref{CE} we shortly discuss how the higher integrability assumption (\ref{eq:assumption}) holds for various operators of gradient form. The rest of the section is devoted to the proof of the upper bound (\ref{upper}). Section \ref{LBo} is devoted to the proof of the lower bound estimate (\ref{lower}). Finally, in Section \ref{FE} we recall the
flatness excess 
improvement \cite{Lin93} from  which Theorem \ref{main} easily follows.

\section{Notation and preliminaries}\label{preliminaries}

We will write $\Omega$ 
to represent a non-empty, open, bounded subset of $\R^N$ with Lipschitz boundary $\partial \Omega$. The use of
capital letters $A, B, \dots,$ will be reserved to denote Borel
subsets of $\R^N$ and we will write $\mathfrak B(\Omega)$ to denote the Borel $\sigma$-algebra in $\Omega$. 

 The letters $x,y$ will denote points in $\Omega$; while $z \in \R^d$ and $P \in  \R^{dN^k}$ will be reserved for vectors and arrays in Euclidean space.  The Greek letters $\varepsilon, \delta, \rho$ and $\gamma$ shall be used for general smallness or scaling constants. 
We follow Lin's convention in \cite{Lin93}, bounding constants will be generally denoted by $c_1\ge c_2 \ge
\dots$, while smallness and decay constants will be usually denoted by $\varepsilon_1\ge \varepsilon_2\ge
\dots$, and $\theta_1\ge\theta_2\ge\dots,$ respectively.  Let us mention that in proving regularity results one may often find it impractical to keep track of numerical constants due to the large amount of parameters; to illustrate better their uses and dependencies we have included a glossary of constants at the end of the paper. 

It will often be useful to 
write a point $x \in \R^{N} = \R^{N-1} \times \R$ as $x = (x',x_{N})$, in the same fashion we will also write
$\nabla = (\nabla',\partial_{N})$ to decompose the gradient operator.
The bilinear form $\R^p \times \R^p \to \R : (x,y) \mapsto x \cdot y$ will stand for the standard inner product between two points
while we will use the notation $| x | := \sqrt{x \cdot x}$ to represent the standard $p$-dimensional Euclidean norm.
To denote open balls centered at
a point $x$ with radius $r$ we will simply write $B_r(x)$.
%

We keep the standard notation for $\Lrm^p$ and $\mathrm \mathrm \mathrm W^{l,p}$ spaces. We write 
$\Crm^l(\Omega;Z)$, and $\Crm^l_c(\Omega;\R^d)$ to denote the spaces of functions with values in $\R^d$ and
with continuous $l$-th derivative, and its subspace of functions compact support respectively. Similar notation
stands for $\mathcal M(\Omega;\R^d)$, the space of bounded Radon measures in $\Omega$; and $\mathcal D(\Omega;\R^d)$, the space of smooth functions in $\Omega$ with compact support. 
For $X$ and $Y$ Banach spaces, the standard 
pairing between $X$ and $Y$ will be denoted by
$\langle \cdot,\cdot\rangle : X \times Y \to \R : (u,v) \mapsto \langle u, v \rangle$.

\subsection{Operators of gradient form}\label{subsec:gto} We introduce an abstract class of linear differential operators $\A : \Lrm^2(\Omega;\R^d) \to \Wrm^{-k,2}(\Omega;\R^{dN^k})$. 
This class contains scalar- and vector-valued gradients, higher gradients, and symmetrized gradients among its elements. 
The motivation behind it
is that we may treat different models by employing a general and neat abstract setting.
At a first glance this framework may appear too sterile; however, this definition is only meant to capture some of the essential regularity and rigidity properties of gradients.

Let $\A : \Lrm^2(\Omega;\R^p) \to \Wrm^{-k,2}(\Omega;\R^q)$ be a $k$-th order homogeneous partial differential operator of   the form  
	\begin{equation}\label{def:ogf}
	\A = \sum_{|\alpha| = k} A_\alpha\partial^\alpha,
	\end{equation}
	where $A_\alpha \in \text{Lin}(\R^p;\R^{q})$, and
 $\partial^\alpha = \partial_1^{\alpha_1}\dots \partial_N^{\alpha_N}$ for every 
	multi-index $\alpha = (\alpha_1,\dots,\alpha_N) \in (\mathbb N \cup \{0\})^N$ with $|\alpha| \coloneqq |\alpha_1| + \dots |\alpha_N|$. 
We define the $\A$-Sobolev of $\Omega$ as
\begin{gather*}
\Wrm^{\A}(\Omega) \coloneqq \bigg\{u \in \Lrm^2(\Omega;\R^d) \; : \; \A u \in \Lrm^2(\Omega;\R^{dN^k})\bigg\}
\end{gather*}
endowed with the norm  $\|u\|^2_{\Wrm^{\A}(\Omega)} \coloneqq \| u \|^2_{\Lrm^2(\Omega)} + \| \A u \|^2_{\Lrm^2(\Omega)}$. We also define the $\A$-Sobolev space with zero boundary values in $\partial \Omega$ by letting
\[
\Wrm^{\A}_0(\Omega) \coloneqq \text{cl}\bigg\{\Crm^\infty_c(\Omega;\R^d), \| \cdot \|_{\Wrm^{\A}(\Omega)}\bigg\}.
\]
The principal symbol of $\A$ is the positively $k$-homogeneous map defined as
\begin{equation*}\label{rank}
\xi \mapsto \mathbb A(\xi) := \sum_{|\alpha| = k} \xi^\alpha A_\alpha \in \text{Lin}(\R^p,\R^{q}), \qquad \xi \ \in \R^N,
\end{equation*}
where $\xi^\alpha = \xi_1^{\alpha_1}\cdots\xi_N^{\alpha_N}$. One says that $\A$ has the constant rank property if 
there exists a positive integer $r$ such that 
\begin{equation*}\label{rank}
\text{rank}\,({\mathbb A(\xi)})= r \qquad \text{for all $\xi \in \R^N \setminus \{0\}$}.\tag{$\dagger$}
\end{equation*}
	
\begin{definition}[Operators of gradient form]\label{gto} Let $\A$ a homogeneous partial differential operator as in \eqref{def:ogf} with $p = d$ and $q = dN^k$.
We say that $\A$ is an operator of gradient form if the following properties hold:

\begin{description}
	\item[1. {\bf Compactness:}] There exists a positive constant $C(\Omega)$ for which
	\begin{equation}\label{def:compactness}
\| \varphi\|^2_{\mathrm W^{k,2}(\Omega)} \le	C(\Omega)\bigg(\| \varphi \|_{\Lrm^2(\Omega)}^2 + \|\A \varphi\|^2_{\Lrm^2(\Omega)}\bigg)
	\end{equation}
	for all $\varphi \in \Crm^\infty(\overline\Omega;\R^d)$. Even more, for every $u \in \mathrm W^{\A}(\Omega)$ the following Poincar\'e inequality holds:
	\begin{equation}\label{poincare1}
	\inf\big\{ \;\| u - v\|^2_{\mathrm W^{k,2}(\Omega)} \; :\; v \in \mathrm W^{\A}(\Omega), \A v = 0\; \big\} \le C(\Omega)\| \A u \|^2_{\Lrm^2(\Omega)}.
	\end{equation}
	
	\item[2. {\bf Exactness:}] There exists an $m$-th homogeneous
partial differential operator
\begin{equation}\label{form}
  \mathcal B \coloneqq  \sum_{|\alpha| = m} B_\alpha \partial^\alpha,
\end{equation}
with coefficients $B_\alpha \in \text{Lin}(Z;\R^n)$ for some positive integer $n$ and a subspace $Z$ of $\R^{dN^k}$, such that for every open and simply connected subset $\omega \subset \Omega$ we have the property
  \[
  \big\{\A u : u \in \Wrm^{\A}(\omega) \big\}= \big\{ v \in \Lrm^2(\omega;Z) : \B v = 0 \; \text{in $\mathcal D'(\omega;\R^n)$}\big\}.
  \]
\end{description}
We write $\A^*$ to denote the $\Lrm^2$-adjoint of $\A$, which is given by
\[
\A^* \coloneqq (-1)^k\sum_{|\alpha| = k} A^T_\alpha\partial^\alpha.
\] 
\end{definition}

\begin{remark}[constant rank]\label{constante}
Let $\A$ and $\B$ be two linear differential operators satisfying an exactness property as in Definition \ref{gto}. Then
both
operators $\A$ and $\B$ have the constant rank property \eqref{rank}.
This follows from the lower semi-continuity of the rank in any subspace of matrices. 
\end{remark}

\begin{remark}[rigidity]\label{rem:wave} The wave cone of an operator $\A$ of the form \eqref{def:ogf} which is defined as
	\[
	\Lambda_{\A} \coloneqq \bigcup_{|\xi|  = 1} \ker(\mathbb A(\xi)) \subset \R^{p},
	\]
 contains the admissible amplitudes in Fourier space for which concentration and oscillation behavior is allowed under the constraint $\A u = 0$. As in the case of gradients, it can be seen from the compactness assumption in Definition \ref{gto} that the wave cone $\Lambda_{\A}$ of a gradient operator $\A$ is the zero space. In particular, there exists a positive constant $\lambda$ (depending only on the coefficients of $\A$) such that
 \begin{equation}\label{eq:strongly elliptic systems}
 |\mathbb A(\xi) z |^2 \ge \lambda |\xi|^{2k} |z|^2 \qquad \text{for all $\xi \in \R^N \setminus \{0\}$ and all $z \in \R^d$}. 
 \end{equation}
\end{remark}

\begin{remark}[Poincar\'e inequality II]\label{poincare22} It follows from the definition of $\Wrm_0^{\A}(\Omega)$ and the compactness assumption of $\A$ that $\Wrm^{\A}_0(\Omega)\subset \Wrm^{k,2}_0(\Omega;\R^d)$. In particular, $\ker(\A) \cap \Wrm^{\A}_0(\Omega) = \{0\} \subset \Lrm^2(\Omega;\R^d)$ and $\A[\Wrm^{\A}_0(\Omega)]$ is closed in the $\Lrm^2$ norm. Thus, by~\cite[Theorem 2.21]{BrezisBook83}, there exists
 a constant\footnote{Possibly abusing the notation, we will denote by $C(\Omega)$ the Poincar\'e constants from Definition \ref{gto} and Remark \ref{poincare22}.}   $C(\Omega)$\begin{equation}\label{poincare2}
\| u \|^2_{\Lrm^{2}(\Omega)} \le C(\Omega) \|\A u\|^2_{\Lrm^2(\Omega)} \qquad \text{for all $u \in \Wrm^{\A}_0(\Omega)$}.
\end{equation}
\end{remark}

\subsubsection{Elliptic regularity}

Let $\A$ be an operator of gradient form as in Definition \ref{gto} and let $\mathbf \sigma \in \Lrm^\infty(\Omega;\R^{dN^k})$ be a tensor of variable coefficients satisfying the strong pointwise G\aa rding inequality (see \eqref{eq:ellipticity})
\begin{equation}\label{eq:strongg}
\frac{1}{M} |P|^2 \le \mathbf \sigma(x) \, P \cdot P \le M|P|^2 \qquad \text{for almost every $x \in \Omega$ and every $P \in \R^{dN^k}$}. 
\end{equation}
If we define
	\[
	\mathbf A_{\beta\alpha}^{ij} \coloneqq (A_\alpha)_{i\beta,j} \qquad \text{for $|\alpha| = |\beta| = k$, and $1 \le i,j \le d$},
	\]
	then we may write 
	\begin{equation}\label{def:bilinear}
	\A \varphi = \mathbf A \nabla^k \varphi \qquad \text{for every $\varphi \in \Crm^k(\overline\Omega;\R^d)$}.
	\end{equation}
It is easy to verify, using the compactness assumption of $\A$, that $\mathbf C \coloneqq (\mathbf A^T \sigma \;\mathbf A)$ satisfies the weak G\aa rding inequality 
\begin{equation}\label{eq:wgarding}
 \langle \mathbf C \; \nabla^k \varphi, \nabla^k\varphi \rangle \ge \left(\frac{1}{MC}\right) \| \nabla^k \varphi \|^2_{\Lrm^2(\Omega)} - \left(\frac{1}{M}\right) \|\varphi\|^2_{\Lrm^2(\Omega)},
\end{equation}
where $C = C(\Omega)$ the constant in the compactness assumption of Definition \ref{gto}; 
for all smooth, $\R^d$-valued functions $\varphi$ in $\overline \Omega$. 
 
\begin{lemma}[Caccioppoli inequality]\label{rem:cac}
	Let $\mathbf \sigma \in \Lrm^\infty(\Omega;\R^{dN^k})$ satisfy the strong pointwise G\aa rding inequality \eqref{eq:strongg} and let $w \in \mathrm W^{\A}(\Omega)$ be a solution of the state equation
	\[
	\A^* (\sigma \A u)= 0 \qquad \text{in $\mathcal D'(\Omega;\R^d)$}.
	\]
	Then there exists a positive constant $C$ depending only on $M,N, \sigma$ and $\A$ such that
	\begin{gather*}
\int_{B_r(x)} |\nabla^k w|^2 \dd x \;  \le \; \frac{ C}{(R- r)^{2k}} \;  \int_{B_R(x)} |w|^2 \dd x
\qquad \text{	for every $B_r(x) \subset B_R(x) \subset \Omega$}.
	\end{gather*}
\end{lemma}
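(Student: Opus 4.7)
The plan is to follow the classical testing strategy for higher-order linear elliptic equations: plug a cutoff multiple of $w$ into the state equation, use the pointwise G\aa rding inequality together with Young's inequality to absorb the top-order contribution, and iterate to dispose of the intermediate derivatives produced by the Leibniz rule.

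Fix $B_r(x)\subset B_R(x)\subset\Omega$ and, for $r\le s<t\le R$, choose $\eta\in\Crm^\infty_c(B_t(x))$ with $\eta\equiv 1$ on $B_s(x)$ and $\|\nabla^j\eta\|_{\Lrm^\infty}\le c_k(t-s)^{-j}$ for $0\le j\le k$. Since $\eta^{2k}w\in\Wrm^\A_0(\Omega)$, continuity of the bilinear form $(u,v)\mapsto\int\sigma\A u\cdot\A v$ on $\Wrm^\A(\Omega)\times\Wrm^\A_0(\Omega)$ lets us test the distributional identity $\A^*(\sigma\A w)=0$ against $\eta^{2k}w$. Writing $\A=\mathbf A\nabla^k$ and using Leibniz with the pointwise bound $|\nabla^\ell(\eta^{2k})|\le c\,\eta^{2k-\ell}(t-s)^{-\ell}$ for $0\le\ell\le k$, one decomposes $\A(\eta^{2k}w)=\eta^{2k}\A w+E$ with
\[
|E(y)|\;\le\; c\sum_{j=0}^{k-1}\frac{\eta^{k+j}(y)}{(t-s)^{k-j}}|\nabla^j w(y)|.
\]
The pointwise G\aa rding inequality \eqref{eq:strongg} yields $\sigma\A w\cdot\A w\ge M^{-1}|\A w|^2$, while Young's inequality (with the splitting $|\A w||E|=(\eta^k|\A w|)(\eta^{-k}|E|)$) absorbs a fraction of $\int\eta^{2k}|\A w|^2$ into the left-hand side. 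This produces
\[
\int_{B_s(x)}|\A w|^2\,\dd y\;\le\; c_1\sum_{j=0}^{k-1}\frac{1}{(t-s)^{2(k-j)}}\int_{B_t(x)}|\nabla^j w|^2\,\dd y.
\]
Applying the compactness estimate \eqref{def:compactness} to $\eta w$ and absorbing the ensuing lower-order cross terms by an identical Young argument converts $|\A w|^2$ into $|\nabla^k w|^2$ on the left, producing the intermediate-derivative Caccioppoli inequality
\[
\int_{B_s(x)}|\nabla^k w|^2\,\dd y\;\le\; c_2\sum_{j=0}^{k-1}\frac{1}{(t-s)^{2(k-j)}}\int_{B_t(x)}|\nabla^j w|^2\,\dd y,\qquad r\le s<t\le R.
\]

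To eliminate the intermediate terms $|\nabla^j w|$ for $0<j<k$, I plug into the above the standard Gagliardo-Nirenberg-Ehrling interpolation
\[
\int_{B_\tau}|\nabla^j w|^2\;\le\; \varepsilon\tau^{2(k-j)}\int_{B_\tau}|\nabla^k w|^2+C_\varepsilon\tau^{-2j}\int_{B_\tau}|w|^2\qquad (0\le j<k).
\]
Setting $F(\rho):=\int_{B_\rho(x)}|\nabla^k w|^2\,\dd y$ and choosing $\varepsilon$ appropriately small, the estimate reduces to a recursion $F(s)\le\theta F(t)+C_\theta(R-r)^{-2k}\int_{B_R(x)}|w|^2\,\dd y$ with $\theta\in(0,1)$. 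The classical iteration lemma---if $f(s)\le\theta f(t)+A(t-s)^{-\alpha}+B$ for $r\le s<t\le R$ with $\theta\in[0,1)$, then $f(r)\le c(\alpha,\theta)(A(R-r)^{-\alpha}+B)$---then closes the argument and gives the stated bound.

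The principal obstacle is the iteration stage: for $k\ge 2$ one must carefully balance the interpolation parameter $\varepsilon$ against the nested-radius scaling so that the powers of $(t-s)$ match and the recursion converges. A secondary subtlety is that the Poincar\'e-type inequality \eqref{poincare1} is coercive only modulo $\ker\A$, but this is harmless here because $\nabla^k$ annihilates any polynomial piece of $\ker\A$, so subtracting such a polynomial on each ball leaves $F(\rho)$ unchanged.
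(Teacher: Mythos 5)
Your proof plan is correct, but it takes a genuinely different route from the paper. The paper's proof is a two-line reduction: it rewrites $\A^*(\sigma\A u)$ as a $2k$-th order elliptic system in divergence form $(-1)^k\sum\partial^\beta(\mathbf C^{ij}_{\beta\alpha}\partial^\alpha u^j)$ with $\mathbf C=\mathbf A^T\sigma\mathbf A$, observes that $\mathbf C$ satisfies the weak G\aa rding inequality \eqref{eq:wgarding}, and invokes Corollary~22 of \cite{barton2014gradient}. You instead give a self-contained proof: test the equation against a cutoff multiple $\eta^{2k}w$, use the \emph{pointwise} G\aa rding inequality \eqref{eq:strongg} together with Young's inequality to absorb the top-order cross terms, pass from $|\A w|^2$ to $|\nabla^k w|^2$ via the (rescaled) compactness estimate \eqref{def:compactness}, and eliminate the intermediate derivatives by interpolation plus the iteration lemma. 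The citation is shorter; your direct argument is self-contained and makes the dependence of $C$ on $M$, $N$, and $\A$ transparent, and it uses only the pointwise G\aa rding inequality rather than routing through the weak one.

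Two details deserve to be made explicit. First, the compactness estimate \eqref{def:compactness} is stated on the fixed domain $\Omega$; when applied on a ball it must be rescaled, giving $\|\nabla^k\varphi\|^2_{\Lrm^2(B_\tau)}\le C(\tau^{-2k}\|\varphi\|^2_{\Lrm^2(B_\tau)}+\|\A\varphi\|^2_{\Lrm^2(B_\tau)})$. Second, the obstacle you flag at the iteration stage is real but standard to fix: if you interpolate on $B_t$ at scale $t$, the coefficient in front of $\int_{B_t}|\nabla^k w|^2$ becomes $c_2\varepsilon\sum_j\bigl(\tfrac{t}{t-s}\bigr)^{2(k-j)}$, which blows up as $t-s\to 0$ and so cannot be made $<1$ by a universal $\varepsilon$. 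The correct form of the interpolation on $B_t$ is $\int_{B_t}|\nabla^j w|^2\le\lambda^{2(k-j)}\int_{B_t}|\nabla^k w|^2+c\,\lambda^{-2j}\int_{B_t}|w|^2$ for every $0<\lambda\le t$; choosing $\lambda=\delta(t-s)$ with a universal small $\delta$ turns the coefficient of $\int_{B_t}|\nabla^k w|^2$ into $c_2\sum_j\delta^{2(k-j)}\le\tfrac12$ and produces precisely the recursion $F(s)\le\tfrac12 F(t)+C_\delta(t-s)^{-2k}\int_{B_t}|w|^2$ that the iteration lemma closes. With these two points spelled out, your argument is complete.
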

\begin{proof}
We may re-write $\A^* (\sigma \A u)$ as the elliptic operator in divergence form
\[
(-1)^k\sum \partial^\beta(\mathbf C^{ij}_{\beta \alpha} \partial^\alpha u^j),
\]
for coefficients $\mathbf C = (\mathbf A^T \sigma \mathbf A)$ satisfying a weak G{\aa}rding inequality as in \eqref{eq:wgarding}.
The assertion then follows from Corollary 22 in \cite{barton2014gradient}.

\end{proof}

Using Lemma \ref{rem:cac} one can show, by classical methods, the following lemma on the regularizing properties of elliptic operators with constant coefficients:

\begin{lemma}[constant coefficients]\label{lem:cosntant} Let $\A$ be an operator of gradient form and let $\sigma_0 \in \textnormal{Lin}(\R^d;\R^{dN^k})$ be a tensor satisfying the strong G{\aa}rding inequality \eqref{eq:strongg}. Then the operator
\[
L_{\sigma_0}u \coloneqq \A^*(\sigma_0 \A u)
\]
is hypoelliptic in the sense that if $\Omega$ is open and connected, and $w \in \Lrm^2(\Omega;\R^d)$, then
\[
L_{\sigma_0}w = 0  \quad \Rightarrow \quad w \in \Crm_{\textnormal{loc}}^\infty(\Omega;\R^d).
\]
Furthermore, there exists a constant $c = c(M,N) \ge 2^N$ such that
\[
\frac{1}{\rho^N}\int_{B_\rho(x)} |\nabla^k u|^2 \dd x \le 
\frac{c}{r^N}\int_{B_r(x)} |\nabla^k u|^2 \dd x  
\qquad \text{for all $0 < \rho \le \frac{r}{2}$,}
\]
\[
\frac{1}{\rho^N}\int_{B_\rho(x)} |\A u|^2 \dd x \le 
\frac{c}{r^N}\int_{B_r(x)} |\A  u|^2 \dd x  
\qquad \text{for all $0 < \rho \le \frac{r}{2}$,}
\]
for every $B_r(x) \subset \Omega$.
\end{lemma}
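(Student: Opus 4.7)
The plan is to reduce everything to classical constant-coefficient elliptic regularity applied to the symbol of $L_{\sigma_0}$. Combining the strong pointwise G\aa rding bound \eqref{eq:strongg} on $\sigma_0$ with the rigidity \eqref{eq:strongly elliptic systems} from Remark \ref{rem:wave}, the principal symbol of $L_{\sigma_0}$, namely $\xi \mapsto \mathbb A(\xi)^T \sigma_0 \mathbb A(\xi)$, satisfies
\[
\bigl(\mathbb A(\xi)^T \sigma_0 \mathbb A(\xi)\bigr) z \cdot z \;\ge\; \frac{\lambda}{M}|\xi|^{2k}|z|^2 \qquad \text{for all $\xi \in \R^N$, $z \in \R^d$.}
\]
Thus $L_{\sigma_0}$ is a constant-coefficient strongly elliptic operator of order $2k$. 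Hypoellipticity then follows from the classical Fourier-analytic construction of a smooth parametrix (alternatively, from the existence of a fundamental solution that is smooth off the origin), so every distributional $\Lrm^2_{\mathrm{loc}}$-solution $w$ of $L_{\sigma_0}w = 0$ automatically lies in $\Crm^\infty_{\mathrm{loc}}(\Omega;\R^d)$.

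For the two reverse-averaging estimates I would exploit the translation invariance of $L_{\sigma_0}$: since its coefficients are constant, each derivative $\partial^\beta w$ again solves $L_{\sigma_0}(\partial^\beta w) = 0$, so Lemma \ref{rem:cac} applies to every $\partial^\beta w$ on any pair of nested balls. Iterating the Caccioppoli inequality $\lceil N/2 \rceil + 1$ times on a dyadic sequence of balls between $B_{r/2}(x)$ and $B_r(x)$ and invoking the Sobolev embedding $\Wrm^{\lceil N/2 \rceil + 1,\,2} \hookrightarrow \Lrm^\infty$ then yields the interior supremum bound
\[
\sup_{B_{r/2}(x)} |\nabla^k w|^2 \;\le\; c(N,M)\, r^{-N}\!\int_{B_r(x)}\!\! |\nabla^k w|^2 \dd y.
\]
The first averaging estimate is immediate from $\rho^{-N}\!\int_{B_\rho}|\nabla^k w|^2 \le \omega_N \sup_{B_\rho}|\nabla^k w|^2$ for any $\rho \le r/2$, combined with the monotonicity $\sup_{B_\rho} \le \sup_{B_{r/2}}$.

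For the analogous bound on $|\A w|^2$, I would subtract a kernel element: using the Poincar\'e inequality \eqref{poincare1} on $B_r(x)$, choose $v \in \ker\A$ with $\|\nabla^k(w-v)\|^2_{\Lrm^2(B_r)} \le C\|\A w\|^2_{\Lrm^2(B_r)}$. Since $L_{\sigma_0}(w-v) = 0$ and $\A(w-v) = \A w$, applying the previous sup bound to $w-v$ together with the pointwise control $|\A(w-v)| \le |\mathbf A|\,|\nabla^k(w-v)|$ coming from \eqref{def:bilinear} yields an analogous interior $\Lrm^\infty$-bound on $\A w$, from which the second averaging estimate follows as before. The main technical obstacle I anticipate is ensuring that the Poincar\'e constant in this last step is genuinely scale-invariant for balls of arbitrary radius; this is not automatic from \eqref{poincare1}, since the norm there mixes lower-order terms, but it can be handled by rescaling $B_r$ to $B_1$ and observing that a $k$-homogeneous rescaling of $w$ converts the equation into an $\A$-equation with the same ellipticity constants, so the top-order part of Poincar\'e retains a scale-invariant constant depending only on $N$ and $M$.
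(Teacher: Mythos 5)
The proposal is correct and is essentially the argument the paper has in mind: the paper gives no proof beyond the remark that the lemma follows ``using Lemma \ref{rem:cac}\dots by classical methods,'' and you have supplied precisely those classical methods. Your verification that $\mathbb A(\xi)^T\sigma_0\mathbb A(\xi)$ is Legendre--Hadamard elliptic via \eqref{eq:strongg} and \eqref{eq:strongly elliptic systems}, the observation that constant coefficients let you differentiate the equation and iterate the Caccioppoli inequality of Lemma \ref{rem:cac} into a scaled $\Lrm^\infty$ bound for $\nabla^k w$, the reduction of the $\A w$ estimate to the $\nabla^k w$ estimate by subtracting a kernel element of $\A$ and using the rescaled top-order Poincar\'e inequality, and your remark about scale invariance under $k$-homogeneous rescaling are all correct and are exactly the ingredients one would use. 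The only detail you pass over is the interpolation needed to control the intermediate derivatives $\nabla^{k+j}w$, $0<j<mk$, from the control of $\nabla^{k}w$ and $\nabla^{(m+1)k}w$ produced by iterating Caccioppoli in steps of $k$ (or, equivalently, applying Caccioppoli to $\partial^\gamma w$ for $|\gamma|$ not a multiple of $k$); this is a standard Gagliardo--Nirenberg step, and mentioning it would make the Sobolev-embedding step fully rigorous, but it does not affect the validity of the argument.
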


\subsubsection{Examples}\label{examples} Next, we gather some well-known differential structures that fit into
the definition of operators of gradient form.

\begin{itemize}
 \item [(i)] {\bf Gradients.} Let $\A : \Lrm^2(\Omega;\R^d) \to \Wrm^{-1,2}(\Omega;\R^{dN}) : u \mapsto (\partial_j u^i)$ for $1 \le i \le d$ and $1 \le j \le N$. In this case
 \[
 A_j \, z = z \otimes \mathbf e_j \quad \text{for every $z \in \R^d$}.
 \]
 Hence, $\Wrm^{\A}(\Omega) = \Wrm^{1,2}(\Omega;\R^d)$ and the compactness property is a consequence of the classical Poincar\'e inequality on $\Omega$.

The exactness assumption is the result of the  characterization of gradients 
via curl-free vector fields.

 Let 
 $\B : \Lrm^2(\Omega;\R^{dN}) \to \Wrm^{-1,2}(\Omega;\R^{dN^2})$ be the curl operator 
 \[
\B v = (\curl(v^i))_i\coloneqq \left(\partial_l v_{ir} - 
 \partial_r v_{il} \right)_{ilr} \qquad 1 \le i \le d, \quad 1 \le l,r \le N,
 \]
 then condition (\ref{form}) is fulfilled for $\B = \sum_{j=1}^N B_j \partial_i$ with coefficients
 \[
       (B_j)_{ilr,pq} = \delta_{ip}(\delta_{jl}\delta_{rq} - \delta_{jr}\delta_{lq})
    \qquad 1\le l,r,q \le N, \quad 1 \le i,p \le d.
         \]
Observe that $\B v = 0$ if and only if curl $v^i = 0$, for every $1 \le i \le d$; or equivalently,
$ v^i = \nabla u^i$ for some function $ u^i : \Omega \subset \R^N \to \R$, for every $1 \le p \le d$ (as long as $\Omega$ is 
simply connected). Hence,
\[
\big\{ \nabla u : u \in \Wrm^{1,2}(\omega)\big\} = \big\{v \in \Lrm^2(\omega;\R^{dN}) : \B v = 0\big\},
\]
text{for all Lipschitz, and simply connected $\omega \subset \subset\Omega$}.
 \item [(ii)] {\bf Higher gradients.} Let $\A : \Lrm^2(\Omega) \to \Wrm^{-k,2}(\Omega;\R^{N^k})$ be the linear operator given by
 \[
 u \mapsto \partial^\alpha u, \qquad \text{where  $|\alpha| = k$}.  
 \]
Compactness is similar to the case of gradients.

 We focus on the exactness condition: Let 
 \[\B^k : \Lrm^2(\Omega;\text{Sym}(\R^{N^k})) \to \Wrm^{-1,2}(\Omega;\R^{N^{k+1}})\] be the curl operator on symmetric functions defined by the coefficients
 \[
  (B^k_j)_{pq\beta_2\dots \beta_k,\alpha_1\dots \alpha_k} := \left(\delta_{jp}\delta_{\alpha_1 q}\prod_{h= 2}^k\delta_{\alpha_h\beta_h}
  - \delta_{jq}\delta_{\alpha_1 p}\prod_{h= 2}^k\delta_{\alpha_h\beta_h}\right), \]
  where $1 \le p,q,\beta_h,\alpha_h \le N, \quad h\in\{2,\cdots,k\}$.

 We write 
 \[
  \B^k v \coloneqq \sum_{i = 1}^N B^k_j \, \partial_j v, \qquad v : \Omega \subset \R^N \to \text{Sym}(\R^{N^k}).
 \]
 
It easy to verify that $\B^k v = 0$ if and only if 
\[
\curl((v_{p\alpha'})_p) = 0 \qquad \text{for all $|\alpha'| = k-1$}.
\]
If $\Omega $ is simply connected, then there exists a function $u^{\alpha'} : \Omega \to \R$ such that 
$v_{p\alpha'} = \partial_p u^{\alpha'}$ for every 
$|\alpha'| = k-1$. 
Using the symmetry of $v$ under the permutation of its coordinates one can further deduce
 the existence of a function $u_k : \Omega \to \text{Sym}(\R^{N^{k-1}})$ with
  \[
  v = \nabla u_k \quad \text{and \quad $(u_k)_{\alpha'} = u^{\alpha'}$}.
  \]
 Moreover, $\B^{k-1} u_k = 0$. By induction one obtains that
\[
 v = \nabla^{k}u_0 \qquad \text{for some function $u_0:\Omega \subset \R^N \to \R$}.
\]

\item [(iii)] {\bf Symmetrized gradients.} Let 
$\mathcal E : \Lrm^2(\Omega;\R^N) \to \Wrm^{-1,2}(\Omega;\text{Sym}(\R^{N^2}))$ be the linear operator given by
 \[
 u \mapsto \mathcal E u\coloneqq  \frac{1}{2}(\partial_j u^i + \partial_i u^j)_{ij}, \qquad \text{for }\; 1\le i,j \le N.  
 \]
The compactness property is a direct consequence of Korn's inequality. 
Consider the second-order homogeneous differential 
operator $\B: \Lrm^2(\Omega;\text{Sym}(\R^{N^2})) \to \Wrm^{-2,2}(\Omega;\R^{N^3})$ defined in the following way
 \[
 \B v = \curl \, (\curl (v)) =  \left(\frac{\partial^2 v_{ij}}{\partial x_i \partial x_l} + \frac{\partial^2 v_{il}}{\partial x_i \partial x_j}
 -\frac{\partial^2 v_{ii}}{\partial x_j \partial x_l} -\frac{\partial^2 v_{jl}}{\partial x_i \partial x_i} \right)_{1 \le i,j,l\le N}.\footnote{Here, $\mathscr B$ is a second order operator expressing the Saint-Venant compatibility conditions.}
 \]
Then $\B v = 0$, if and only if $v = \mathcal Eu$ for some $u \in \mathrm W^{1,2}(\Omega;\R^N) = \Wrm^{\mathcal E}(\Omega)$.
\end{itemize}

\begin{remark}
	In the previous examples, we have omitted the characterization of higher gradients of vector-valued functions; however, the ideas remain the same as in the examples (i) and (ii).
\end{remark}

\begin{remark}[two-dimensional elasticity]\label{rem:plate} In dimension $N = 2$ and provided that $\Omega$ is simply connected, the fourth-order equation for pure bending of a thin plate given by
	\[
	\nabla \cdot \nabla \cdot (\mathbf D(x) \nabla^2 u(x)) = 0 \qquad \text{for $u \in W^{2,2}(\Omega)$}
	\] 
	is equivalent to the in-plane elasticity equation
	\[
	\nabla \cdot ( \mathbf S(x) \mathcal E w(x)) = 0 \qquad \text{where $w \in W^{1,2}(\Omega;\R^2)$},
	\]
	for some tensor $\mathbf S$ such that $\mathbf D = (\mathbf R_\perp \mathbf S^{-1} \, \mathbf R_\perp)$, and where $ \mathbf R_\perp$ is the fourth-order tensor whose action is to rotate a second-order tensor by $90^\circ$ (see, e.g.,  \cite[Chapter 2.3]{MiltonBook}). Furthermore,
	\[
	\mathbf S(x) \mathcal E w(x) = \mathbf R_\perp \nabla^2 u(x) \quad \text{and} \quad \nabla \cdot \nabla \cdot (\mathbf R_\perp \mathcal E w(x)) = 0.
	\]
	  For this reason, when working with the linear equations for pure bending of a thin plate we may indistinctly use regularizing properties of any of the equations above in the portions where $\mathbf D$ is regular. 
	
\end{remark}

\subsection{Compensated compactness} The following theorem is a generalized version of the well-known div-curl Lemma. 
\begin{lemma}\label{compensated} Let $\A$ be a $k$-th order operator of gradient form and 
 let $\{\sigma_h\}\subset \Lrm^2(\Omega;\R^{dN^k} \otimes \R^{dN^k})$ be a sequence of strongly elliptic tensors as in \eqref{eq:strongg}. Assume also that $\{u_h\} \subset \Wrm^{\A}(\Omega)$ and $
 \{f_h\} \subset \Wrm^{-k,2}(\Omega;\R^d)$ are sequences for which 
 \[
  \A^*(\sigma_h \A u_h) = f_h \quad \text{in $\mathcal D'(\Omega;\R^d)$, \; for every $h \in \mathbb N$}.
 \]
Further assume there exist $\sigma \in \Lrm^2(\Omega;\R^{dN^k} \otimes \R^{dN^k})$,
$u \in \Wrm^{\A}(\Omega)$, and $f \in \Wrm^{-k,2}(\Omega;\R^d)$  for which
\begin{gather*}
 \A u_h \wc \A u \quad \text{in $\Lrm^2(\Omega;\R^{dN^k})$}, \qquad f_h \to f \quad \text{in $\Wrm^{-k,2}(\Omega;\R^d)$}, \\ \text{and} \quad
 \sigma_h \to \sigma \quad \text{in $\Lrm^2(\Omega;\R^{dN^k} \otimes \R^{dN^k})$}.
\end{gather*}
Then, 
\begin{gather*}
\A^*(\sigma \A u) = f \quad  \text{in $\mathcal D'(\Omega;\R^d)$},\\
\sigma_h \A u_h \cdot \A u_h \to \sigma \A u \cdot \A u \quad \text{in $\mathcal D'(\Omega)$}. 
\end{gather*}
In particular,
\[
 \A u_h \to \A u\quad \text{in $\Lrm^2_{\textnormal{loc}}(\Omega;\R^{dN^k})$}.
\]
\end{lemma}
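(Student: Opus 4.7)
The plan is to prove the three assertions in order, using the strong $\Lrm^2$-convergence of $\sigma_h$ together with the weak $\Lrm^2$-convergence of $\A u_h$ and the structure imposed by the exactness and compactness properties of $\A$.

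First I would verify the auxiliary claim that $\sigma_h \A u_h \wc \sigma \A u$ weakly in $\Lrm^2(\Omega;\R^{dN^k})$. Since $\sigma_h$ is uniformly bounded in $\Lrm^\infty$ by the pointwise G\aa rding inequality \eqref{eq:strongg}, I pass to a subsequence with $\sigma_h \to \sigma$ almost everywhere. For $\psi \in \Lrm^2$, split
\[
\int_\Omega (\sigma_h \A u_h - \sigma \A u)\psi \,\dd x = \int_\Omega (\sigma_h - \sigma)\A u_h \,\psi \,\dd x + \int_\Omega \sigma(\A u_h - \A u)\psi \,\dd x;
\]
the second term vanishes by the weak convergence of $\A u_h$ since $\sigma\psi \in \Lrm^2$, and the first by dominated convergence $(\sigma_h-\sigma)\psi \to 0$ in $\Lrm^2$ combined with the uniform $\Lrm^2$-bound on $\A u_h$. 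Testing the state equation against $\varphi \in \mathcal D(\Omega;\R^d)$ then gives $\langle \sigma \A u, \A \varphi\rangle = \langle f,\varphi\rangle$, that is, $\A^*(\sigma \A u) = f$ in $\mathcal D'(\Omega;\R^d)$.

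The heart of the matter is the distributional convergence $\sigma_h \A u_h \cdot \A u_h \to \sigma \A u \cdot \A u$. Using the Poincar\'e inequality \eqref{poincare1}, I would replace $u_h$ by $\tilde u_h := u_h - \eta_h$ with $\eta_h \in \ker(\A)$ so that $\tilde u_h$ is bounded in $\Wrm^{k,2}(\Omega;\R^d)$; this preserves both $\A u_h$ and the state equation. Along a subsequence, $\tilde u_h \wc \tilde u$ in $\Wrm^{k,2}$ with $\A \tilde u = \A u$, and by Rellich--Kondrachov $w_h := \tilde u_h - \tilde u \to 0$ strongly in $\Wrm^{k-1,2}_\loc(\Omega;\R^d)$. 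Fix $\varphi \in \mathcal D(\Omega)$; decomposing $\A u_h = \A \tilde u + \A w_h$ and using the weak convergence $\sigma_h \A u_h \wc \sigma \A u$ from the previous step,
\[
\int_\Omega \varphi \,\sigma_h \A u_h \cdot \A u_h \,\dd x = \int_\Omega \varphi\, \sigma_h \A u_h \cdot \A \tilde u \,\dd x + \int_\Omega \varphi\,(\sigma_h \A u_h) \cdot \A w_h \,\dd x,
\]
where the first integral tends to $\int \varphi\, \sigma \A u \cdot \A u \,\dd x$. For the second, the Leibniz rule gives the commutator identity $\A(\varphi w_h) = \varphi \A w_h + [\A,\varphi]w_h$ in which $[\A,\varphi]$ is a differential operator of order $\le k-1$. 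Since $\varphi w_h \in \Wrm^{k,2}_0(\Omega;\R^d)$, I test the state equation against it to obtain
\[
\int_\Omega \varphi(\sigma_h \A u_h)\cdot \A w_h \,\dd x = \langle f_h, \varphi w_h\rangle - \int_\Omega (\sigma_h \A u_h)\cdot [\A,\varphi] w_h \,\dd x,
\]
and both terms vanish in the limit: the first by strong--weak pairing ($f_h \to f$ in $\Wrm^{-k,2}$ while $\varphi w_h \wc 0$ in $\Wrm^{k,2}_0$), the second because $\sigma_h \A u_h$ is bounded in $\Lrm^2$ whereas $[\A,\varphi] w_h \to 0$ strongly in $\Lrm^2$ by Rellich.

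The strong $\Lrm^2_\loc$ convergence of $\A u_h$ then follows from ellipticity. Given $K \subset\subset \Omega$, choose $\varphi \in \mathcal D(\Omega)$ with $0\le\varphi\le 1$ and $\varphi\equiv 1$ on $K$; by the lower bound in \eqref{eq:strongg},
\[
\frac{1}{M}\int_K |\A u_h - \A u|^2 \,\dd x \le \int_\Omega \varphi\,\sigma_h (\A u_h - \A u)\cdot(\A u_h - \A u) \,\dd x,
\]
whose three-term expansion has limits $\int \varphi \sigma \A u \cdot \A u \,\dd x$ (from the previous step), $-2\int \varphi \sigma \A u \cdot \A u \,\dd x$ (weak--strong pairing), and $\int \varphi \sigma \A u \cdot \A u \,\dd x$ (dominated convergence, since $|\sigma_h| \le M$ and $\sigma_h \to \sigma$ a.e.), which sum to zero. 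The main obstacle throughout is the commutator/integration-by-parts step: one must invoke the compactness property of $\A$ twice, first to reduce to a $\Wrm^{k,2}$-bounded sequence so that $\varphi w_h$ lies in $\Wrm^{k,2}_0$ and the state equation can be tested against it, and second to secure the strong $\Wrm^{k-1,2}_\loc$-convergence that annihilates the commutator term $[\A,\varphi] w_h$.
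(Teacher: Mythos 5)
Your proposal is correct and follows essentially the same route as the paper's proof: integrate by parts against a compactly supported test function, use the Leibniz/commutator structure of $\A$ to pull out lower-order terms, and kill those terms by Rellich compactness after a $\Wrm^{k,2}$-normalization. The paper writes $\A(u_h\varphi) = (\A u_h)\varphi + [\A,\varphi]u_h$ and tests against $u_h\varphi$ directly (implicitly assuming, via the Poincar\'e inequality, that $u_h \wc u$ in $\Wrm^{k,2}$), whereas you decompose $\A u_h = \A\tilde u + \A w_h$, handle the limit piece by the weak convergence $\sigma_h\A u_h \wc \sigma\A u$, and test the equation against $\varphi w_h$. This is the same argument modulo repackaging; your version is slightly more explicit about the $\ker(\A)$-normalization and about why both error terms vanish (strong--weak pairing and Rellich on the commutator), which the paper treats tersely.
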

\begin{proof}
 For simplicity we denote $\tau_h:= \sigma_h\A u_h, \tau := \sigma \A u$. 
 It suffices to observe that $\tau_h\wc \tau$ in $\Lrm^2$ to prove that 
 \[
  \A^* \tau = f \quad \text{in $\mathcal D'(\Omega;\R^d)$}.
 \]

The strong convergence on compact subsets of $\Omega$ requires a little bit more 
effort. 
 Considering that $\A$ is a $k$-th order linear differential operator, we may find constants $c_{\alpha\beta}$ with
 $|\alpha| + |\beta| \le k, |\beta | \ge 1$ such that
 \[
  \A(u_h \varphi) = (\A u_h) \varphi + \sum_{\alpha,\beta} c_{\alpha\beta} \partial^\alpha u_h \partial^\beta 
  \varphi \in \Lrm^2(\Omega;\R^d) \qquad \forall\;\varphi \in \mathcal D(\Omega), \forall \; h\in \mathbb N.
 \]
Hence,
\[
 \langle \tau_h \cdot \A u_h, \varphi \rangle = \langle f_h, u_h \varphi \rangle - \langle \tau_h, 
 \sum_{\alpha,\beta} c_{\alpha\beta} \partial^\alpha u_h \partial^\beta 
  \varphi \rangle.
\]
By the compactness assumption on $\A$ we may assume without loss of generality that $u_h \wc u$ in $\mathrm W^{k,2}(\Omega;\R^d)$.
Thus, passing to the limit we obtain
\begin{align*}
 \lim_{h \to \infty} \langle \tau_h \cdot \A u_h, \varphi \rangle  = \langle f,  u\varphi \rangle - \langle  \tau, 
 \sum_{\alpha,\beta} c_{\alpha\beta} \partial^\alpha  u \, \partial^\beta
  \varphi \rangle  = \langle  \tau \cdot \A  u, \varphi \rangle,
\end{align*}
for every $\varphi \in \mathcal D(\Omega)$.
One concludes that 
\begin{equation}\label{distributions}
\sigma_h \A u_h \cdot \A u_h \to \sigma \A u \cdot \A u \quad \text{in $\mathcal D'(\Omega)$}. 
\end{equation}
Fix $\omega \subset \subset \Omega$ and let $0 \le \varphi \in \mathcal D(\Omega)$ with
$\varphi \equiv 1$ on $\omega$. Using the convergence in (\ref{distributions}) and the uniform ellipticity \eqref{eq:ellipticity} of $\{\sigma_h\}$, one gets
\begin{align*}
 \lim_{h \to \infty} \| \A u_h - \A u\|_{\Lrm^2(\omega)} & \le  
 M \cdot \lim_{h \to \infty}  \langle  \sigma_h(\A(u_h - u))\cdot\A(u_h  - u), \varphi \rangle \\
 & \le M \cdot \bigg(\lim_{h \to \infty} \langle \sigma_h\A u_h \cdot \A u_h , 
 \varphi \rangle \\ 
 & - \lim_{h \to \infty} 2\langle \sigma_h \A u_h \cdot \A u, \varphi\rangle 
 + \langle \sigma_h \A u \cdot \A u , \varphi \rangle\bigg)\\
 &  = 0.
\end{align*}

\end{proof}

\subsection{Young measures and lower semi-continuity of integral energies}

In this section $\B : \Lrm^2(\Omega;Z) \to \Wrm^{-m,2}(\Omega;\R^n)$ is assumed to be a an $m$-th order homogeneous partial differential operator of the form
\[
\sum_\alpha B_\alpha \partial^\alpha, \qquad B_\alpha \in \text{Lin}(Z;\R^n), \;\text{with $Z$ a linear subspace of $\R^{dN^k}$}, 
\]  
satisfying the constant rank condition (\ref{rank}).

Next, we recall some facts about $\B$-quasiconvexity, lower semi-continuity and Young measures. The results in this section hold for differential operators with coefficients $B_\alpha$ in arbitrary spaces $\text{Lin}(\R^{p};\R^{q})$ for $p,q$ a pair of positive integers; however, we only present versions where the dimensions match our current setting. We start by stating a version of the Fundamental theorem for Young measures due to Ball \cite{Ball89}. 
\begin{theorem}[Fundamental theorem for Young measures]\label{fundamental} Let $\Omega \subset \R^N$ be a measurable set with finite measure and let $\{v_j\}$ be a 
sequence of measurable functions $v_j : \Omega \to Z$. Then there exists a subsequence $\{v_{h(j)}\}$ and a weak$^*$ measurable map $\mu : \Omega \to \M(Z)$ with the 
following properties:
\begin{enumerate}
 \item We denote $\mu_x := \mu(x)$ for simplicity, then $\mu_x \ge 0$ in the sense of measures and $|\mu_x|(Z) \le 1$ for a.e. $x \in \Omega$.\vskip2pt
 \item If one additionally assumes that $\{v_{h(j)}\}$ is uniformly bounded in $\Lrm^1(\Omega;Z)$, then $|\mu_x|(Z) = 1$ for a.e. $x \in \Omega$.\vskip2pt
 \item If $F : \R^{dN^k} \to \R$ is a Borel and lower semi-continuous function, and is also bounded from below, then 
 \[
  \int_\Omega \langle \mu_x, F\rangle \, \dd x \le \liminf_{j \to \infty} \int_\Omega F(v_{h(j)}) \, \dd x. 
 \]
 \item If $\{v_{h(j)}\}$ is uniformly bounded in $\Lrm^1(\Omega;Z)$ and $F: \R^{dN^k} \to \R$ is a continuous function, and bounded from below, then 
 \[
  \int_\Omega \langle \mu_x, F\rangle \, \dd x = \liminf_{j \to \infty} \int_\Omega F(v_{h(j)}) \, \dd x
 \]
if and only if $\{F\circ v_{h(j)}\}$ is equi-integrable. In this case, 
\[
 F \circ v_{h(j)} \wc \langle \mu_x,F\rangle \quad \text{in } \Lrm^1(\Omega).
\]
\end{enumerate}
\end{theorem}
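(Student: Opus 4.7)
The strategy is classical: represent each $v_j$ as the elementary Young measure $x \mapsto \delta_{v_j(x)}$ and extract a weak-$\ast$ cluster point in a suitable dual Banach space.

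\textbf{Step 1 (extraction and (1)).} I would introduce the Banach space $X := \Lrm^1(\Omega; \Crm_0(Z))$, whose dual can be identified with $\Lrm^\infty_{w^\ast}(\Omega; \M(Z))$, the space of weak-$\ast$ measurable maps $\mu:\Omega \to \M(Z)$ equipped with the essential supremum of $|\mu_x|(Z)$. For each $j$ the functional
\[
T_j : \varphi \;\mapsto\; \int_\Omega \varphi(x,v_j(x)) \dd x
\]
corresponds to the elementary map $x \mapsto \delta_{v_j(x)}$ and satisfies $\|T_j\|_{X^\ast} \le 1$. Banach--Alaoglu then yields a subsequence $\{T_{h(j)}\}$ converging weak-$\ast$ to some $\mu \in \Lrm^\infty_{w^\ast}(\Omega; \M(Z))$. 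Testing against non-negative $\varphi \in X$ shows $\mu_x \ge 0$ a.e.; testing against $\varphi(x,z) = \psi(x)\chi_R(z)$ for $\psi \ge 0$ in $\Lrm^1(\Omega)$ and $\chi_R \in \Crm_c(Z)$ with $\chi_R \uparrow 1$ yields $|\mu_x|(Z) \le 1$ a.e.

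\textbf{Step 2 (tightness, (2)).} Under a uniform $\Lrm^1$ bound, Chebyshev's inequality gives $|\{x \in \Omega : |v_j(x)| > R\}| \le C/R$, so for every $\varepsilon > 0$ one finds $R$ with $|\{|v_{h(j)}| > R\}| < \varepsilon$ uniformly in $j$. Taking $\chi_R \in \Crm_c(Z)$ equal to one on a ball of radius $R$, the weak-$\ast$ convergence applied to $\psi(x)\chi_R(z)$ gives $\int_\Omega \psi \langle \mu_x, \chi_R\rangle \dd x \ge \int_\Omega \psi \dd x - C\varepsilon\|\psi\|_\infty$ for every non-negative $\psi \in \Lrm^1 \cap \Lrm^\infty$. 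Letting $\varepsilon \to 0$ and $R \to \infty$ yields $|\mu_x|(Z) = 1$ a.e.

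\textbf{Step 3 (lower semicontinuity, (3)).} For a Borel lower semicontinuous $F \ge -c$ on $\R^{dN^k}$, I would choose bounded continuous $G_n$ with $0 \le G_n \uparrow F+c$ pointwise (standard via Moreau--Yosida type regularisation). For every non-negative $\psi \in \Lrm^\infty(\Omega)$, weak-$\ast$ convergence applied to $\psi(x)G_n(z)$ yields
\[
\int_\Omega \psi(x) \langle \mu_x, G_n \rangle \dd x \;=\; \lim_{j\to\infty} \int_\Omega \psi(x) G_n(v_{h(j)}(x)) \dd x \;\le\; \liminf_{j\to\infty} \int_\Omega \psi(x) \bigl(F(v_{h(j)})+c\bigr) \dd x.
\]
Taking $\psi \equiv 1$ and passing to the limit $n\to\infty$ via monotone convergence on the left then gives (3).

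\textbf{Step 4 (equi-integrability, (4)).} If $\{F\circ v_{h(j)}\}$ is equi-integrable, Dunford--Pettis provides a weakly convergent subsequence in $\Lrm^1(\Omega)$ with limit $g$; applying Step 3 to both $F$ and $-F$ (truncating so both sides remain bounded below) forces $g = \langle \mu_x, F\rangle$ a.e., which identifies the limit and upgrades the inequality in (3) to equality. Conversely, equality in (3) together with (2) rules out escape of mass, giving equi-integrability by a De la Vall\'ee--Poussin argument.

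\textbf{Main obstacle.} The delicate point is Step 1: identifying $X^\ast$ with $\Lrm^\infty_{w^\ast}(\Omega; \M(Z))$ requires a measurable-disintegration result and the proof that the weak-$\ast$ limit, a priori only a functional, genuinely arises from a weakly-$\ast$ measurable family of non-negative Radon measures on $Z$. Once this representation is established, parts (2)--(4) reduce to tightness, approximation, and standard weak-$\Lrm^1$ arguments.
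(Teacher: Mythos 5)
The paper does not prove Theorem~\ref{fundamental}: it is stated as a known result and cited directly from Ball~\cite{Ball89}, so there is no in-paper proof to compare against. Your sketch reproduces the classical duality argument that Ball himself gives — embedding the sequence as elementary Young measures $x\mapsto\delta_{v_j(x)}$ into $\big(\Lrm^1(\Omega;\Crm_0(Z))\big)^* \cong \Lrm^\infty_{w^*}(\Omega;\M(Z))$, extracting a weak-$*$ limit by Banach--Alaoglu, and then obtaining the four properties by testing against suitable integrands. Steps 1--3 are essentially correct as written; in Step~3 be careful that the approximating functions should be taken in $\Crm_c(Z)$ (not merely bounded continuous) so that $\psi\otimes G_n$ actually lies in $\Lrm^1(\Omega;\Crm_0(Z))$, which for a lower semicontinuous $F$ bounded below is still possible via the standard supremum-of-truncated-Lipschitz construction. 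The one place where your outline is thinner than the standard proof is the converse direction of~(4): invoking De la Vall\'ee--Poussin by name does not by itself yield ``equality implies equi-integrability.'' The usual argument truncates $F$ at level $M$, uses weak-$*$ convergence to get $\lim_j\int T_M F(v_{h(j)}) = \int\langle\mu_x,T_M F\rangle$, and then plays the assumed equality against monotone convergence in $M$ to show the mass $\int (F(v_{h(j)})-M)^+$ is uniformly small; Chacon's biting lemma gives a cleaner packaging of the same idea. That said, these are technical details in a classical theorem the paper itself does not reprove, and your overall plan is the right one.
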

In the sense of Theorem \ref{fundamental}, we say that the sequence $\{v_{h(j)}\}$ generates the Young measure $\mu$. 

The following proposition tells us that a uniformly bounded sequence in the $\Lrm^p$ norm, which is also sufficiently close to $\ker(\B)$, may be approximated by
a $p$-equi-integrable sequence in $\ker(\B)$ in a weaker $\Lrm^q$ norm. We remark that this rigidity result is the only one where Murat's constant rank condition (\ref{rank}) is 
used. 

\begin{proposition}[{\cite[Lemma 2.15]{FonsecaMuller99}}]\label{equi-p}
 Let $1 < p < \infty$. Let $\{v_h\}$ be a bounded sequence in $\Lrm^p(\Omega;Z)$ generating a Young measure $\mu$,
 with $v_h \wc v$ in $\Lrm^p(\Omega;Z)$ and $\B v_h \to 0$ in $\mathrm W^{-m,p}(\Omega;\R^n)$. Then there exists a $p$-equi-integrable sequence $\{u_h\}$ in $\Lrm^p(\Omega;Z) \cap \ker(\B)$  that generates the same Young measure $\mu$
 and is such that
 \[
  \int_\Omega v_h \, \dd x = \int_\Omega u_h \, \dd x, \quad \|v_h - u_h \|_{\Lrm^q(\Omega)} \to 0, \quad \text{for all } 1 \le q < p. 
 \]
\end{proposition}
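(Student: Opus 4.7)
The plan is to prove this proposition (which is the constant-rank analogue of the classical $p$-equi-integrability result of Fonseca--M\"uller--Pedregal for gradients) via a two-stage procedure: first, project onto $\ker(\B)$ using a Fourier multiplier that exploits the constant rank hypothesis \eqref{rank}; second, truncate to gain $p$-equi-integrability without leaving $\ker(\B)$.

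For the first stage, I would construct the orthogonal projection symbol $\mathbb{P}(\xi) : Z \to \ker(\mathbb{B}(\xi))$ for $\xi \neq 0$. The constant rank of $\mathbb{B}$ on $\R^N \setminus \{0\}$ guarantees that $\xi \mapsto \mathbb{P}(\xi)$ is smooth and $0$-homogeneous, so Mihlin's multiplier theorem yields that the associated operator $\mathbb{T} = \mathcal{F}^{-1}\mathbb{P}\mathcal{F}$ is bounded on $L^p(\R^N;Z)$ for every $1 < p < \infty$. After extending $v_h - v$ to all of $\R^N$ (say by multiplying by a cutoff), set $\tilde v_h \coloneqq v + \mathbb{T}(v_h - v)$; this sequence lies in $\ker(\B)$, is bounded in $L^p$, and satisfies $\tilde v_h \wc v$. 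To compare $\tilde v_h$ with $v_h$, observe that $(I - \mathbb{T})$ factors through $\B$ in the sense that there is a pseudodifferential right inverse $\mathbb{G}$ of order $-m$ with $(I-\mathbb{T})w = \mathbb{G}(\B w)$ on test functions; this together with $\B v_h \to 0$ in $W^{-m,p}$ gives, via a Sobolev embedding on bounded $\Omega$, the strong convergence $\|v_h - \tilde v_h\|_{L^q(\Omega)} \to 0$ for every $1 \le q < p$. In particular $\tilde v_h$ generates the same Young measure $\mu$ as $v_h$.

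For the second stage, I would apply a Kristensen-type truncation / Decomposition Lemma tailored to the constraint. Given the bounded sequence $\tilde v_h \in \ker(\B)$, split $\tilde v_h = g_h + b_h$ where $\{|g_h|^p\}$ is $p$-equi-integrable and $b_h \to 0$ in measure (e.g.\ by truncating $|\tilde v_h|$ above the level $\lambda_h \to \infty$ chosen so that the truncation error goes to zero in measure). Because naive truncation destroys the constraint, one reapplies the projection: set $u_h \coloneqq \mathbb{T} g_h$ (plus a small constant to restore the average). Two checks remain: $u_h \in \ker(\B)$ (immediate) and $\{|u_h|^p\}$ is still $p$-equi-integrable (this follows from $L^p$-boundedness of $\mathbb{T}$ together with a quantitative equi-integrability statement for Calder\'on--Zygmund operators applied to equi-integrable families). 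Finally, since $\mathbb{T}(\tilde v_h - g_h) = \mathbb{T} b_h \to 0$ in $L^q$ for $q<p$ (by interpolating $L^p$-boundedness with convergence in measure), we also obtain $\|u_h - \tilde v_h\|_{L^q(\Omega)} \to 0$, so $u_h$ generates the Young measure $\mu$ as well.

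The main obstacle is the second step: preserving the differential constraint $\B u_h = 0$ during the truncation. The non-locality of $\mathbb{T}$ means one cannot simply truncate pointwise; the subtlety is to choose truncation levels and to quantify how the Calder\'on--Zygmund operator $\mathbb{T}$ interacts with equi-integrability, which is precisely where the constant-rank hypothesis becomes indispensable (without it, $\mathbb{P}(\xi)$ need not be smooth across the locus where $\rank\mathbb{B}(\xi)$ jumps, and the multiplier is not $L^p$-bounded). The adjustment to match averages is a harmless finite-dimensional correction by adding a small constant in $Z$, which is automatically in $\ker(\B)$.
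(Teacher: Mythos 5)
The paper does not supply a proof of this statement; it quotes it verbatim as~\cite[Lemma~2.15]{FonsecaMuller99}. Your two-stage plan --- a Mihlin multiplier projection onto $\ker\B$ built from the constant-rank symbol, followed by truncation and re-projection together with the fact that this Calder\'on--Zygmund projector maps $p$-equi-integrable families to $p$-equi-integrable families --- is a faithful reconstruction of the Fonseca--M\"uller argument, and the obstacle you flag at the end (preserving the constraint under truncation, and the indispensability of constant rank for smoothness of $\mathbb{P}(\xi)$) is exactly the technical heart of their proof.
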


Let $F : \R^{dN^k} \to \R$ be a lower semi-continuous function  with $0 \le F(P) \le C(1 + |P|^p)$ for some positive constant $C$. 
The $\B$-quasiconvex envelope of $F$ at $P  \in  Z \subset \R^{dN^k}$ is defined as
\begin{equation}\label{eq:integral}
\begin{split}
 Q_{\B} F(P) := & \inf\setBB{\int_{[0,1]^N} F(P + v(y)) \dd y}{\\ & v \in \Crm_{\text{per}}^\infty\left([0,1]^N;Z\right), \B v = 0 \;\, \text{and}  \; \int_{[0,1]^N} v \dd y = 0}.
 \end{split}
\end{equation}
The most relevant feature of $Q_{\B}F$ is that, for $p > 1$, the lower semi-continuous envelope with respect to the weak-$L^p$ topology of the functional 
\begin{equation}\label{eq:integralq}
 v \mapsto  \int_\Omega F(v) \, \text{d}x, \qquad \text{where $v \in \Lrm^p(\Omega;Z)$ and $\B v = 0$},
\end{equation}
is given by the functional
 \[v \mapsto \int_\Omega Q_{\B}F(v) \, \text{d}x, \qquad \text{where $v \in \Lrm^p(\Omega;Z)$ and $\B v = 0$}.\]
 
 If $\mu$ is a Young measure generated by a 
 sequence $\{v_h\}$ in $\Lrm^p(\Omega;Z)$ such that $\B v_h = 0$ for every $h \in \mathbb N$, then we say that $\mu$ is a $\B$-free Young measure.
 
  We recall the following Jensen inequality for $\B$-free Young measures \cite[Theorem 4.1]{FonsecaMuller99}:
 \begin{theorem}\label{characterization}
  Let $1 < p < +\infty$. Let $\mu$ be a $\B$-free Young measure in $\Omega$. Then for a.e. $x \in \Omega$ and all lower semi-continuous functions that
  satisfy $|F(P)| \le C(1 + |P|^p)$ for some positive constant $C$ and all $P \in \R^{dN^k}$, one has that
  \[
   \langle \mu_x , F \rangle \ge Q_{\B}F(\langle \mu_x, \Id\rangle).
  \]
 \end{theorem}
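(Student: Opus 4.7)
The plan is a blow-up plus periodic $\B$-free surgery, in the spirit of Fonseca--M\"uller. Using Proposition \ref{equi-p}, I may assume the generating sequence $\{v_h\} \subset \Lrm^p(\Omega;Z)$ satisfies $\B v_h = 0$ for every $h$ and is $p$-equi-integrable. Choose $x_0 \in \Omega$ which is a common Lebesgue point of $x \mapsto \langle \mu_x, F_k\rangle$ for a countable family of continuous truncations $F_k \nearrow F$ (exploiting that $F$ is lsc with $|F(\cdot)| \le C(1+|\cdot|^p)$), of $x \mapsto \langle \mu_x, \Id\rangle$, and of $|v|^p$, where $v$ is the weak $\Lrm^p$-limit of $\{v_h\}$. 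Set $P := \langle \mu_{x_0}, \Id\rangle$; the goal is $Q_\B F(P) \le \langle \mu_{x_0}, F\rangle$.

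First I would blow up: for $r \in (0,\dist(x_0,\partial\Omega))$ define $\tilde v^r_h(y) := v_h(x_0 + ry)$ on $B_1$. Homogeneity of $\B$ gives $\B \tilde v_h^r = 0$, and Fubini preserves $p$-equi-integrability on average. A standard Cantor diagonalization through $r \downarrow 0$, using the continuous-case equality in part (4) of Theorem \ref{fundamental} for each $F_k$ together with Lebesgue differentiation, extracts $v_r := \tilde v^r_{h(r)}$ which is $p$-equi-integrable on $B_1$, satisfies $\B v_r = 0$,
\[
\frac{1}{|B_1|}\int_{B_1} v_r \, \dd y \longrightarrow P, \qquad \frac{1}{|B_1|}\int_{B_1} F_k(v_r) \, \dd y \longrightarrow \langle \mu_{x_0}, F_k\rangle \quad \text{for every } k.
\]

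Next comes the core step: produce periodic, mean-zero, $\B$-free competitors for $Q_\B F(P)$. Embed $\overline{B_1}$ in a cube $Q$ and pick a cut-off $\chi_\delta \in \Crm^\infty_c(B_1)$ equal to $1$ outside a $\delta$-neighborhood of $\partial B_1$. The field $\chi_\delta(v_r - P)$, extended $Q$-periodically, has zero mean but is no longer $\B$-free. Invoking Murat's constant-rank property (Remark \ref{constante}), there exists an order-zero Fourier multiplier $\Pi$, bounded on $\Lrm^p$ for $1<p<\infty$, that projects $\Lrm^p$-fields onto $\ker(\B)$ modulo constants. Set
\[
w_r := \Pi\bigl[\chi_\delta(v_r - P)\bigr] \in \Crm^\infty_{\mathrm{per}}(Q;Z),
\]
which is $\B$-free and has zero mean. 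On $B_{1-2\delta}$ the correction $w_r - (v_r-P)$ is supported (up to exponential tails of the multiplier kernel) in the thin annulus $\{\chi_\delta \ne 1\}$, and $p$-equi-integrability of $\{v_r\}$ forces $\|w_r - (v_r - P)\|_{\Lrm^p(Q)} \to 0$ as first $r \downarrow 0$ and then $\delta \downarrow 0$. Rescaling $Q$ to $[0,1]^N$ makes $w_r$ admissible in the definition \eqref{eq:integral} of $Q_\B F(P)$. The polynomial growth $|F(\cdot)| \le C(1+|\cdot|^p)$ and Vitali's convergence theorem, applied to the truncations $F_k$, yield
\[
Q_\B F(P) \le \int_{[0,1]^N} F(P + w_r)\, \dd y \le \frac{1}{|B_1|}\int_{B_1} F_k(v_r)\,\dd y + o_r(1) + \varepsilon_k \longrightarrow \langle \mu_{x_0}, F_k\rangle + \varepsilon_k,
\]
and monotone convergence $F_k \nearrow F$ gives $Q_\B F(P) \le \langle \mu_{x_0}, F\rangle$.

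The main obstacle is the periodization in Step~3: a naive cut-off destroys the differential constraint, and extending a $\B$-free field across $\partial B_1$ is forbidden by the very presence of $\B$. Restoring $\B$-freeness through the constant-rank projector costs an $\Lrm^p$ correction concentrated near $\partial B_1$, and what rescues the argument is the $p$-equi-integrability delivered by Proposition \ref{equi-p}, which prevents any concentration of $|v_r|^p$ on this thin annulus. Converting this smallness into smallness of $\int F$ despite $F$ being only lower semi-continuous is the final delicate point, handled by the continuous truncations $F_k$ and the uniform polynomial bound.
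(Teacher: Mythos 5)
The paper itself does not prove this theorem; it is quoted directly from Fonseca--M\"uller \cite[Theorem 4.1]{FonsecaMuller99}, so the benchmark for your proposal is their argument rather than anything in this manuscript. Your blow-up-plus-periodization strategy, with $p$-equi-integrable decomposition via Proposition~\ref{equi-p}, cut-off, and projection onto $\ker(\B)$ through a constant-rank Fourier multiplier, is indeed the Fonseca--M\"uller blueprint and is sound for \emph{continuous} $F$.

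Two points need repair. The smaller one: the multiplier $\Pi$ built from the degree-zero homogeneous constant-rank symbol is a Calder\'on--Zygmund operator; its kernel decays polynomially off the diagonal, not exponentially, so ``up to exponential tails of the multiplier kernel'' is not the mechanism. The correct argument is that $\Pi - \Id$ annihilates $\B$-free fields, so $\|w_r - \chi_\delta(v_r - P)\|_{\Lrm^p}$ is controlled by $\|\B[\chi_\delta(v_r - P)]\|_{\Wrm^{-m,p}}$; the latter consists of commutator terms supported on $\{\nabla\chi_\delta \ne 0\}$, and the $p$-equi-integrability of $\{v_r\}$ on that fixed-measure annulus, together with $\Lrm^p$-boundedness of $\Pi$ (Mikhlin), gives the required smallness as $\delta \downarrow 0$ uniformly in $r$.

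The more serious issue is the treatment of lower semicontinuity. With $F_k \nearrow F$ and $F_k \le F$, the displayed inequality $\int_{[0,1]^N} F(P + w_r)\,\dd y \le |B_1|^{-1}\int_{B_1} F_k(v_r)\,\dd y + o_r(1) + \varepsilon_k$ goes the wrong way: $F - F_k \ge 0$ is not uniformly small in any sense (it may be of order $|\cdot|^p$ at each $k$, just at shifting locations), and neither Vitali nor the polynomial growth bound closes this. What your construction actually delivers is $Q_{\B} F_k(P) \le \langle\mu_{x_0}, F_k\rangle$ for each fixed $k$, hence $\sup_k Q_{\B} F_k(P) \le \langle\mu_{x_0}, F\rangle$. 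To conclude you would still need the identity $\sup_k Q_{\B} F_k = Q_{\B} F$, which is a genuine infimum/supremum interchange (competitor against truncation level) and is not automatic; it is neither stated nor justified in your proposal. The continuous case is the substance of the theorem and you have that right, but the extension to merely lower semicontinuous $F$ requires either a proof of $\sup_k Q_{\B} F_k = Q_{\B} F$ under the $p$-growth hypothesis, or a direct control of $\int F(P + w_r)$ that exploits the $p$-equi-integrability of $\{w_r\}$ together with an Egorov-type argument relative to the limit measure $\mu_{x_0}$ on the target space $Z$.
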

 
 \subsection{Geometric measure theory and sets of finite perimeter}\label{sec:gmt}
 
 Most of the facts collected in this section can be found in \cite{MaggiBook} and \cite{APF}; however, some notions as the slicing of sets of finite perimeter are presented there only in a formal way. For a better understanding of such topics we refer the reader to \cite{FedererBook}. 
 
 Let $A \subset \R^N$ be a Borel set. The Gauss-Green measure $\mu_A$ of $A$ is the derivative of the characteristic function of $A$ in the sense of distributions, i.e., 
$\mu_A := \nabla(\mathds 1_A)$. 
       We say that $A$ is a set of locally finite perimeter if and only if $|\mu_A|$ is a vector-valued Radon 
       measure in $\R^N$. We write $A \in \BV_{\text{loc}}(\R^N)$ to express that $A$ is a set of locally finite perimeter in $\R^N$.
       
       Let $\omega \subset \subset \R^N$ be a Borel set. The perimeter in $\omega$ of a set $A$ with locally finite perimeter is defined as
       \[
        \Per(A,\omega) := |\mu_A|(\omega).
       \]

The Radon-Nikod\'ym differentiation theorem states that the set of points
\begin{align*}
 \partial^* A   := \setBB{x \in \R^N}{& \quad \lim_{r  \downarrow 0} \; \frac{\Per(A;B_r(x))}{\text{vol}(B_1')\cdot r^{N-1}}  = 1,  \\ &  \text{and} \quad  \frac{\textnormal{d}\mu_A}{\textnormal{d}|\mu_A|}(x) \; \text{exists and belongs to $\mathbb S^{N-1}$}}
\end{align*}
has full $|\mu_A|$-measure in $\R^N$; this set is commonly known as the {\it reduced boundary} of $A$.
We will also use the notation 
\[
 \nu_A(x) := \frac{\textnormal{d}\mu_A}{\textnormal{d}|\mu_A|}(x) \qquad x \in \partial^* A;
\]
the {\it measure theoretic normal} of $A$.

In general, for $s \ge 0$, we will denote by $\mathcal H^s$ the {\it $s$-dimensional Hausdorff measure in $\R^N$}.
The following well-known theorem captures the structure of sets with finite perimeter in terms of the measure $\mathcal H^{N-1}$:
 \begin{theorem}[De Giorgi's Structure Theorem]
            Let $A$ be a set of locally finite perimeter. Then
            \begin{gather*}
            \partial^* A = \bigcup_{j = 1}^\infty K_j \cup N,
            \end{gather*}
            where 
            \[|\mu_A|(N) = 0,\]
            and $K_j$ is a compact subset of a $\Crm^1$-hypersurface $S_j$ for every $j \in \mathbb N$.
Furthermore, $\nu_A|_{S_j}$ is normal to $S_j$ and 
\[
 \mu_A = \nu_A\,\mathcal H^{N-1}\llcorner \partial ^* A.\]
\end{theorem}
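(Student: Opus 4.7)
My plan is to run the standard De Giorgi blow-up argument at each point of the reduced boundary and then appeal to a $C^1$-rectifiability criterion for measures with unit $(N-1)$-density.

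\medskip

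\noindent\textbf{Step 1 (Compactness for rescaled sets).} Fix $x\in\partial^*A$ and, for $r>0$, let
\[
A_{x,r}\;\coloneqq\;\frac{A-x}{r}.
\]
Since $|\mu_{A_{x,r}}|(B_\rho)=r^{1-N}|\mu_A|(B_{r\rho}(x))$, the density condition at $x$ gives $\limsup_{r\downarrow 0}|\mu_{A_{x,r}}|(B_\rho)\le C\rho^{N-1}$ for every $\rho>0$. The usual $\BV_{\loc}$ compactness theorem then produces, along a subsequence $r_j\downarrow 0$, a limit set $E\subset\R^N$ of locally finite perimeter with $\mathds{1}_{A_{x,r_j}}\to\mathds{1}_E$ in $\Lrm^1_{\loc}(\R^N)$ and $\mu_{A_{x,r_j}}\overset{*}{\rightharpoonup}\mu_E$ as measures.

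\medskip

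\noindent\textbf{Step 2 (Identification of the blow-up as a half-space).} The Radon--Nikod\'ym assumption at $x$ says that the rescaled measures $\nu_{A_{x,r}}\,|\mu_{A_{x,r}}|$ concentrate in direction along $\nu_A(x)$; passing to the limit I expect $\mu_E$ to satisfy
\[
\mu_E \;=\; \nu_A(x)\,|\mu_E|,\qquad |\mu_E|(B_\rho)=\omega_{N-1}\rho^{N-1}\ \text{for all }\rho>0.
\]
The first identity forces $\Div(\mathds{1}_E\,\tau)=0$ for every tangential field $\tau\perp\nu_A(x)$, so $E$ is invariant under translations in the hyperplane orthogonal to $\nu_A(x)$ and is therefore of the form $\{y\cdot\nu_A(x)\le t\}$ for some $t\in\R$; the density normalisation pins $t=0$, so $E$ is precisely the half-space $H_{\nu_A(x)}=\{y:y\cdot\nu_A(x)\le 0\}$. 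This half-space characterisation is the main obstacle and is what makes the whole argument go through; once we have it, every subsequence of blow-ups has the same limit, hence the full family $A_{x,r}$ converges.

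\medskip

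\noindent\textbf{Step 3 (Density and representation formula).} Lower-semicontinuity of the perimeter along the full blow-up convergence, combined with an elementary upper bound obtained by testing $\mu_A$ against suitable cut-offs of $x+rB_1$, yields
\[
\lim_{r\downarrow 0}\frac{|\mu_A|(B_r(x))}{\omega_{N-1}r^{N-1}}\;=\;1\qquad\text{for every } x\in\partial^*A.
\]
Together with the trivial bound $|\mu_A|(\R^N\setminus\partial^*A)=0$ and the Besicovitch differentiation theorem applied to the Radon measures $|\mu_A|$ and $\mathcal H^{N-1}\llcorner\partial^*A$, this upgrades to the mutual absolute continuity $|\mu_A|=\mathcal H^{N-1}\llcorner\partial^*A$, and then the defining property of $\nu_A$ yields the desired Gauss--Green representation
\[
\mu_A\;=\;\nu_A\,\mathcal H^{N-1}\llcorner\partial^*A.
\]

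\medskip

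\noindent\textbf{Step 4 ($C^1$-rectifiability).} Since every $x\in\partial^*A$ admits the half-space $H_{\nu_A(x)}$ as a blow-up, the hyperplane $\nu_A(x)^\perp$ is an approximate tangent plane to $\partial^*A$ at $x$ in the measure-theoretic sense. Combining the unit $(N-1)$-density established in Step~3 with the existence of these approximate tangent planes, the Marstrand--Mattila (equivalently Federer) rectifiability criterion gives that $\partial^*A$ is countably $\mathcal H^{N-1}$-rectifiable: up to a set $N$ with $\mathcal H^{N-1}(N)=0$ (hence $|\mu_A|(N)=0$), it is a disjoint countable union of compact pieces $K_j$ lying inside $C^1$-hypersurfaces $S_j$, and the tangent plane to $S_j$ at any $x\in K_j$ coincides with $\nu_A(x)^\perp$, so that $\nu_A\restriction S_j$ is normal to $S_j$. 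This concludes the proof.
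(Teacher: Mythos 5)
The paper states De Giorgi's Structure Theorem in its geometric measure theory preliminaries as a classical fact, with references to \cite{MaggiBook}, \cite{APF}, and \cite{FedererBook}, and gives no proof; there is therefore no in-paper argument to compare against. Your blind outline is the standard blow-up proof and captures the correct skeleton: compactness of rescalings, identification of the blow-up as the half-space $\{y \cdot \nu_A(x) \le 0\}$, the Gauss-Green representation via Besicovitch differentiation, and rectifiability from density bounds together with approximate tangent planes.

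Two points of precision. First, in Step 2 you attribute the normalization $|\mu_E|(B_\rho) = \mathrm{vol}(B_1')\,\rho^{N-1}$ and the directional identity $\mu_E = \nu_A(x)\,|\mu_E|$ solely to the Radon-Nikod\'ym condition, but both ingredients built into the paper's definition of $\partial^*A$ are needed: the unit-density condition yields, via lower semicontinuity of perimeter along the blow-up, the upper bound $|\mu_E|(B_\rho) \le \mathrm{vol}(B_1')\,\rho^{N-1}$, while the Radon-Nikod\'ym condition together with weak-$*$ convergence of the vector-valued Gauss-Green measures yields $\mu_E(B_\rho)\cdot\nu_A(x) = \mathrm{vol}(B_1')\,\rho^{N-1}$ for a.e.\ $\rho$; saturation of $|\mu_E(B_\rho)| \le |\mu_E|(B_\rho)$ then forces the directional identity, after which the tangential-translation and $1$D monotonicity argument you sketch does pin down the half-space. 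Second, in Step 4, appealing to a Marstrand-Mattila type criterion presupposes that $\mathcal H^{N-1}\llcorner\partial^*A$ is locally finite; this should be recorded explicitly, and it follows from the density lower bound at points of $\partial^*A$ together with a Vitali covering argument and local finiteness of $|\mu_A|$. With these clarifications your outline is sound and coincides with the classical proof cited by the paper.
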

From De Giorgi's Structure Theorem it is clear that $\text{spt}~ \mu_A = \overline {\partial^* A}$. Actually, up to modifying $A$ on a set of zero measure, one has that $\partial A = \overline {\partial^* A}$ (see \cite[Proposition 12.19]{MaggiBook}). 
From this point on, each time we deal with a set $A$ of finite perimeter, we will assume without loss of generality that
\begin{equation}\label{eq:closed}
\partial A = \supp ~ \mu_A = \overline {\partial^* A}.
\end{equation}

For a set of locally finite perimeter $A$, the deviation from being a {\it perimeter minimizer} in $\Omega$ at a given scale $r$ is quantified by the monotone function
\[
\Dev_\Omega(A,r) := \sup \bigg\{ \Per(A;B_r(x)) - \Per(E;B_r(x)) : E \Delta A \subset \subset B_r(x) \subset \Omega\bigg\}.
\] 
The next result, due to Tamanini \cite{Tamanini84}, states that a set of locally finite perimeter with small deviation $\Dev_\Omega$ at every scale is actually a $\Crm^1$-hypersurface up to a lower dimensional set.  
\begin{theorem}\label{tamanini}
  Let $A \subset \R^N$ be a set of locally finite perimeter and let $c(x)$ be a locally bounded function for which
  \[\Dev_{\Omega}(A,r) \le c(x)r^{N-1 + 2\eta} \qquad \text{for some} \; \eta \in (0,1/2\,].\]
 Then the reduced boundary in $\Omega$, $(\partial^* A \cap \Omega)$, is an open $\Crm^{1,\eta}$-hypersurface and the singular set 
 $\Omega \cap (\partial A \setminus \partial^* A)$ has at most 
 Hausdorff dimension $(N-8)$. 
 \end{theorem}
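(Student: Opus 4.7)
The proof I would propose follows the De Giorgi–Federer–Tamanini approach for almost-minimizers of the perimeter functional. The central quantity is the spherical excess at $x \in \partial^* A$ and scale $r > 0$,
\[
\mathbf E(x, r, \nu) \; := \; \frac{1}{r^{N-1}} \int_{\partial^* A \cap B_r(x)} \frac{|\nu_A(y) - \nu|^2}{2} \; \dd \mathcal H^{N-1}(y), \qquad \nu \in \Sbb^{N-1},
\]
together with $\mathbf E(x, r) := \inf_{\nu} \mathbf E(x, r, \nu)$. The plan consists of three steps.

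\textbf{Step 1 (excess decay).} The heart of the proof is the following improvement: there exist $\theta \in (0,1/2)$, $\varepsilon_0 > 0$, $c_0 > 0$ depending only on $N$, such that whenever $x_0 \in \partial^* A \cap \Omega$ and $\mathbf E(x_0,r) + c(x_0) r^{2\eta} \le \varepsilon_0$, one has
\[
\mathbf E(x_0, \theta r) \; \le \; \tfrac{1}{2} \theta^{2\eta} \mathbf E(x_0, r) \; + \; c_0 \, c(x_0) \, r^{2\eta}.
\]
I would argue by contradiction and blow-up. For a failing sequence $(A_j, x_j, r_j)$, recenter and rescale by $r_j$ so that $\partial A_j$ becomes the graph of a Lipschitz function $u_j$ over a hyperplane normal to an averaged $\nu_{A_j}$ (the Lipschitz approximation is justified by the smallness of the excess). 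Combining the almost-minimality with the decay $c(x_j) r_j^{2\eta} \to 0$ one obtains a Caccioppoli inequality for $u_j$; renormalizing $u_j$ by $1/\sqrt{\mathbf E(x_j, r_j)}$ yields a sequence whose strong $\Wrm^{1,2}$ limit is weakly harmonic on the unit half-ball. Classical excess-decay for harmonic functions then contradicts the assumed failure.

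\textbf{Step 2 (iteration and $\Crm^{1,\eta}$ regularity).} Iterating the estimate of Step 1 on dyadic scales $r_k := \theta^k r_0$ and summing the resulting geometric series gives
\[
\mathbf E(x_0, r_k) \; \le \; C \theta^{2\eta k} \bigl( \mathbf E(x_0, r_0) + c(x_0) r_0^{2\eta} \bigr), \qquad k \ge 0.
\]
Combined with the density lower bound $|\mu_A|(B_r(x_0)) \ge c_N r^{N-1}$ valid for all $x_0 \in \partial^* A$, a Campanato-type argument implies $\eta$-Hölder continuity of $\nu_A$ on $\partial^* A \cap \Omega$. The Lipschitz approximation of Step 1 then upgrades to a $\Crm^{1,\eta}$ parametrization of $\partial^* A \cap \Omega$, which is in particular open in $\partial A \cap \Omega$.

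\textbf{Step 3 (dimension of the singular set).} For $x \in (\partial A \setminus \partial^* A) \cap \Omega$, the blow-ups $A_{r} := (A - x)/r$ are almost-minimizers with deviations bounded by $c(x) r^{2\eta} \to 0$. Consequently, every $\Lrm^1_\text{loc}$-limit point of $\{A_r\}$ is an honest perimeter-minimizing cone. A Federer-type dimension reduction argument, together with Simons' theorem that perimeter-minimizing cones in $\R^N$ are half-spaces when $N \le 7$, gives $\mathcal H^{s}((\partial A \setminus \partial^* A) \cap \Omega) = 0$ for every $s > N - 8$.

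\textbf{Main obstacle.} The delicate point is the compactness argument in Step 1: one must verify that the rescaled, normalized graphs converge \emph{strongly} in $\Wrm^{1,2}$ so that the Dirichlet energy is lower semi-continuous on the limit and the limit solves Laplace's equation, while simultaneously checking that the almost-minimality deviation enters only as an additive error of order $c(x_0) r^{2\eta}$, which is of lower order than the excess. The Hölder exponent $\eta$ in the conclusion is precisely the exponent that preserves the geometric decay of the excess once the additive term $c(x_0) r^{2\eta}$ is incorporated into the iteration.
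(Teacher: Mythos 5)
The paper does not prove this theorem; it is stated as a result due to Tamanini \cite{Tamanini84} (with the Simons-type dimension bound for the singular set) and used as a black box in Section 6. Your sketch follows exactly the standard De~Giorgi--Federer--Tamanini strategy for almost-minimizers --- excess decay by blow-up to a harmonic comparison function, iteration of the decay on dyadic scales to get a Campanato estimate for $\nu_A$ and hence a $\Crm^{1,\eta}$ graph, and Federer dimension reduction combined with Simons' cone theorem for the singular set --- so your proposal takes essentially the same route as the cited proof, and its overall architecture is correct.

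A few small points worth tightening. In Step 1 the normalized graphs live over an $(N-1)$-dimensional disk, not a ``unit half-ball''; the harmonic comparison and the classical excess-decay estimate both take place on that flat disk. The form of the iteration inequality you wrote, with prefactor $\tfrac12\theta^{2\eta}$ rather than the $C\theta^2$ that the harmonic estimate naturally produces, is fine (for $\theta$ small, $C\theta^2\le\tfrac12\theta^{2\eta}$), but it obscures where the exponent $\eta$ comes from: the quadratic excess decay from harmonicity is then deliberately weakened so as to dominate the additive error $c(x_0)r^{2\eta}$ in the iteration. In Step 2 you invoke a density lower bound ``for all $x_0\in\partial^*A$''; what one actually needs (and obtains from the almost-minimality via an isoperimetric comparison) is a uniform lower bound $\Per(A;B_r(x_0))\ge c_N r^{N-1}$ for $x_0\in\supp\mu_A\cap\Omega = \partial A\cap\Omega$ at all sufficiently small scales --- otherwise one only controls the reduced boundary, not its closure, and the openness of $\partial^*A$ in $\partial A$ would not follow. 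Finally, you should note explicitly in Step 3 that almost-minimality is preserved under blow-up with a deviation that vanishes in the limit, so the limiting cones are genuine area-minimizing cones; this is the point that makes Federer's dimension reduction applicable.
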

 
%

\subsubsection{Slicing sets of finite perimeter} Given a Borel set $E \subset \R^N$ and a Lipschitz function $g: \R^N \to \R$, we shall consider the level set slices 
\[
E_t \coloneqq E \cap \big\{ g = t \big\}, \quad t \in \R. 
\]

For a set $A\subset \R^N$ of finite perimeter in $\Omega$, the level set slice of the reduced boundary $(\partial^* A)_t$ is $\mathcal H^{N-2}$-rectifiable for almost every $t \in \R$. Furthermore, by the co-area formula,
$t \mapsto \mathcal H^{N-2}( (\partial ^*A)_t) \in \Lrm^1_{\text{loc}}(\R)$.

If the set $\{g = t\}$ is a $\Crm^1$-manifold and $t$ is such that $\mathcal H^{N-2}( (\partial ^*A)_t) < \infty$, we shall define the {\it slice of $A$ in $g^{-1}\{t\}$} as
\[
\langle A, g, t \rangle \coloneqq 
 \mathcal H^{N-2} \llcorner (\partial^* A)_t.
\]

It turns out that, for $g(x) = |x|$, the level set slice $A_t$ is locally diffeomorphic to a set of finite perimeter in $\R^{N-1}$. Even more, 
\begin{gather}\label{eq:agree}
\mathcal H^{N-2} \llcorner \partial^* A_t = \langle A, g, t\rangle \quad \text{for a.e. $t > 0$, and} \\
\pi_g \nu_A \coloneqq  (\Id_{\R^N} - \nabla g \otimes \nabla g) \nu_A \neq 0 \quad \text{for $\mathcal H^{N-2}$-a.e. $x \in (\partial^*A)_t$}. \label{eq:agree2}
\end{gather}
Here, $\partial^*A_t$ is understood as the image, under local diffeomorphisms, of the reduced boundary of a set of finite perimeter. These properties can be inferred from the classical slicing by hyperplanes, see e.g., \cite[Chapter 18.3]{MaggiBook}.

We also define the cone extension of a set $E \subset \R$ containing $\{0\}$ by letting
\[
D_E \coloneqq \bigg\{ \lambda x \in \R^N : \lambda > 0, \; x \in E \bigg\}.
\] 
For a.e. $t > 0$ and $g(x) = |x|$, the cone extension of $A_t$ is a set of  locally finite perimeter 
 in $\R^N$ with 
 	\begin{equation}\label{eq:conegrowth}\partial^* D_{A_t} = D_{(\partial^* A)_t} \quad \text{and} \quad 
 \Per(D_{A_t};B_\rho) = \left(\frac{1}{N-1}\right) \frac{\rho^{N-1}}{t^{N-2}} \cdot \mathcal H^{N-2}((\partial^* A)_t).
 \end{equation}

 In order to attend different variational problems involving the minimization of perimeter, a well-known technique is to modify a set $A$ within balls $B_t$ without modifying its Gauss-Green measure in $(B_t)^c$. 
 
 For almost every $t > 0$, where $\langle A, g, t \rangle$ is well-defined and \eqref{eq:agree}-\eqref{eq:agree2} hold, we construct a cone-like comparison set of $A$ by setting
\begin{equation}\label{eq:cone}
\tilde A \coloneqq  \mathds{1}_{B_t}D_{A_t} + \mathds{1}_{\Omega \setminus B_t}A.
\end{equation}
Exploiting the basic properties of reduced boundaries, it follows by \eqref{eq:agree} that
\begin{equation}\label{eq:conelike}
\mu_{\tilde A} =  \mu_{D_{A_t}} \llcorner B_t + \mu_A \llcorner (B_t)^c;
\end{equation}
and, in particular,
\[
\Per(\tilde A;B_r) = \Per(D_{\partial^* A_t};B_t) + \Per(A;(B_t)^c \cap B_r) \qquad \text{for all $r > t$}.
\]

On the other hand, again by the co-area formula,
\[
\mathcal H^{N-1}((\partial^* A)_t \cap \{ g= t\}) = 0 \qquad \text{for almost every $t > 0$}.
\]
Using the monotonicity of $r \mapsto \Per(A;B_r)$ and the general version of the co-area formula (see \cite[Theorem  3.2.22]{FedererBook}) one can show that the derivative of $r \mapsto \Per(A;B_r)$ exists at almost every $t > 0$; even more, up to a further null set it is given by
\begin{equation}\label{eq:slice}
\frac{\dd }{\dd r} \bigg|_{r = t} \Per(A;B_r)  = |\pi_t \nu_A|^{-1} \mathcal H^{N-2}((\partial^* A)_t)\ge 
\langle A, g , t \rangle(\R^N).
\end{equation}
The previous estimate will play a crucial role in proving the lower bound \eqref{lower}.

\section{Existence of solutions: proof of Theorem \ref{existence}}\label{exsol}

We show an equivalence between the constrained problem (\ref{intro1}) and the unconstrained problem \eqref{P} for which
existence of solutions and regularity properties for minimizers are discussed in the present and subsequent sections. 
We fix $\A: \Lrm^2(\Omega\;\R^d) \to \Wrm^{-k,2}(\Omega;\R^{dN^k})$ an operator of gradient from as in Definition \ref{gto}. We also fix $A_0 \subset \R^N$, a set of locally finite perimeter. 

Recall that, the minimization problem (\ref{intro1}) under the state constraint (\ref{state eq}) reads:
\begin{equation*}
\text{minimize} \qquad \left\{\; \int_\Omega fw_A \; + \; \Per(A;\overline\Omega)\;  :\; A \in \BV_{\text{loc}}(\R^N), ~A\cap \Omega ^c \equiv A_0\cap \Omega^c \right\},\footnote{As stated in Section \ref{sec:gmt}, we write $A \in \BV_{\text{loc}}(\R^N)$ to express that $A$ is a Borel set of locally finite perimeter in $\R^N$.}
\end{equation*}
where $w_A$ is the unique distributional solution to the state equation
\[
\A^*(\sigma_A \A u) = f, \quad u \in \mathrm W^{\A}_0(\Omega).
\]
On the other hand, the associated saddle point problem \eqref{P} reads:
\[
\inf\left\{ \sup_{u \in \Wrm^{\A}_0(\Omega)} I_\Omega(u,A) : A \in \BV_{\text{loc}}(\R^N), \; A \cap \Omega^c \equiv A_0 \cap \Omega^c\right\}, \tag{P}
\]
where 
\[
I_\Omega(u,A) \coloneqq \int_\Omega 2fu \, \text{d}x - \int_\Omega \sigma_A \A u \cdot \A u \, \text{d}x ~ + ~ \Per(A;\overline \Omega).
\]
\begingroup
\def\thetheorem{\ref{existence}}
\begin{theorem}[existence]
There exists a solution $(w,A)$ of 
problem \eqref{P}. Furthermore, there is a one to one correspondence 
\[
 (w,A) \mapsto (w_A,A) 
\]
between solutions to problem \eqref{P} and the minimization problem
(\ref{intro1}) under the constraint (\ref{state eq}).
\end{theorem}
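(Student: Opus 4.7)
The strategy is to solve the inner supremum in \eqref{P} in closed form, which simultaneously reduces the saddle problem to the minimization of $E(A)$ and yields the claimed correspondence; existence of a minimizer of $E$ then follows from the direct method of the calculus of variations.

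For the first step, fix an admissible $A$, i.e., a Borel set with $A \cap \Omega^c \equiv A_0 \cap \Omega^c$. By the ellipticity \eqref{eq:ellipticity} combined with the Poincaré inequality \eqref{poincare2}, the quadratic form $u \mapsto \int_\Omega \sigma_A \A u \cdot \A u \dd x$ is strictly convex and coercive on $\Wrm^{\A}_0(\Omega)$. Therefore $u \mapsto I_\Omega(u,A)$ is strictly concave, weakly upper semicontinuous, and bounded above, so its supremum is attained at a unique $w \in \Wrm^{\A}_0(\Omega)$. Setting the first variation to zero against all $\varphi \in \Crm^\infty_c(\Omega;\R^d)$ gives exactly the weak form $\A^*(\sigma_A \A w) = f$, so $w = w_A$ by uniqueness; testing the state equation with $w_A$ itself yields $\int_\Omega \sigma_A \A w_A \cdot \A w_A \dd x = \int_\Omega f w_A \dd x$, whence
\[
\sup_{u \in \Wrm^{\A}_0(\Omega)} I_\Omega(u,A) = I_\Omega(w_A,A) = \int_\Omega f w_A \dd x + \Per(A;\overline\Omega) = E(A).
\]
This both identifies \eqref{P} with the minimization of $E$ and produces the one-to-one correspondence.

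To produce a minimizer of $E$, take a minimizing sequence $\{A_h\}$. Testing the state equation for $w_{A_h}$ against $w_{A_h}$ itself and using \eqref{eq:ellipticity} and \eqref{poincare2} yields a uniform bound $\|w_{A_h}\|_{\Wrm^{\A}_0(\Omega)} \le C(M,\Omega,\|f\|_\infty)$, so $\int_\Omega f w_{A_h} \dd x$ is uniformly bounded and hence so is $\Per(A_h;\overline\Omega)$. By $\BV_{\loc}$-compactness, up to a subsequence $\mathds 1_{A_h} \to \mathds 1_A$ in $\Lrm^1_{\loc}(\R^N)$ and a.e.\ for some $A$ with $A \cap \Omega^c \equiv A_0 \cap \Omega^c$, and dominated convergence delivers $\sigma_{A_h} \to \sigma_A$ in $\Lrm^2(\Omega)$. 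The compactness property \eqref{def:compactness} of $\A$ then gives, up to further extraction, $w_{A_h} \rightharpoonup w$ weakly in $\Wrm^{k,2}_0(\Omega;\R^d)$ and strongly in $\Lrm^2(\Omega;\R^d)$. Applying Lemma \ref{compensated} to the triple $(\sigma_{A_h},w_{A_h},f)$ identifies $\A^*(\sigma_A \A w) = f$ in $\mathcal D'$, so by uniqueness $w = w_A$. Strong $\Lrm^2$ convergence yields $\int_\Omega f w_{A_h} \dd x \to \int_\Omega f w_A \dd x$, while the frozen exterior data $A_0 \cap \Omega^c$ ensures the lower semicontinuity of $\Per(\cdot;\overline\Omega)$ along $\{A_h\}$ (only the inner traces on $\partial\Omega$ can vary). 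Hence $E(A) \le \liminf_h E(A_h)$, so $(w_A, A)$ is a minimizer and the corresponding saddle point of \eqref{P}.

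The main subtlety is the passage to the limit in the dissipation term $\int_\Omega \sigma_{A_h} \A w_{A_h} \cdot \A w_{A_h} \dd x$, since $\A w_{A_h}$ converges only weakly in $\Lrm^2$ while $\sigma_{A_h}$ jumps across the moving interfaces $\partial A_h$. This is precisely the content of the compensated compactness Lemma \ref{compensated}, whose hypotheses are met thanks to the exactness property built into the definition of an operator of gradient form; no higher integrability of $\A w_{A_h}$ is required at this stage (that refinement is the content of Theorem \ref{thm2}).
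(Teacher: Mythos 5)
Your proposal is correct and follows essentially the same path as the paper: attainment of the inner supremum via strict concavity and coercivity (Lax–Milgram), identification of the maximizer with the state solution $w_A$, then the direct method using $\BV$-compactness, passage to the limit in the state equation via Lemma~\ref{compensated}, and lower semicontinuity of $\Per(\cdot;\overline\Omega)$ with the exterior trace frozen by $A_0$. The only difference is cosmetic: you make the identity $\sup_u I_\Omega(u,A)=E(A)$ explicit up front, which sidesteps tracking the quadratic dissipation term along the minimizing sequence, whereas the paper carries $I_\Omega$ throughout and handles that term via the full conclusion of Lemma~\ref{compensated} (equivalently, the integration-by-parts identity noted in its footnote).
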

\addtocounter{theorem}{-1}
\endgroup
\begin{proof} We employ the direct method. We begin by proving existence of solutions to problem \eqref{P}. To do so, we will first prove the following:  \begin{claim}\label{claime}{\it 1}.  For any set $A \subset \R^N$ as in the assumptions, there exists $w_A \in \mathrm W^{\A}_0(\Omega)$ such that 
\[0 \le I_\Omega(w_A,A) = \sup_{u \in \mathrm W^{\A}_0(\Omega)} I_\Omega(u,A) <  \infty.\]
\end{claim}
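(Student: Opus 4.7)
The plan is to apply the direct method to the strictly concave, coercive, quadratic-in-$u$ functional $u \mapsto I_\Omega(u,A)$. First I would establish an a priori upper bound and coercivity. Combining the pointwise ellipticity \eqref{eq:ellipticity} of $\sigma_A$, the Poincar\'e inequality \eqref{poincare2} on $\Wrm^{\A}_0(\Omega)$ from Remark \ref{poincare22}, and Cauchy--Schwarz together with Young's inequality on the linear term, one obtains for every $u \in \Wrm^{\A}_0(\Omega)$ a bound of the form
\[
I_\Omega(u,A) \;\le\; C(f,\Omega,M) + \Per(A;\overline\Omega) - \frac{1}{2M}\|\A u\|_{\Lrm^2(\Omega)}^2.
\]
In particular $\sup_{u \in \Wrm^{\A}_0(\Omega)} I_\Omega(u,A) < \infty$, and any maximizing sequence $\{u_n\}$ is uniformly bounded in $\Wrm^{\A}_0(\Omega)$ (using \eqref{poincare2} once more to pass from the bound on $\A u_n$ to a bound on $u_n$).

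Next I would extract a weakly convergent subsequence $u_n \rightharpoonup w_A$ in $\Wrm^{\A}_0(\Omega)$ and verify weak upper semi-continuity of $u \mapsto I_\Omega(u,A)$. The linear term $u \mapsto \int_\Omega 2fu \, \dd x$ is weakly continuous on $\Lrm^2(\Omega;\R^d)$. The quadratic form $u \mapsto \int_\Omega \sigma_A \A u \cdot \A u \, \dd x$ is convex and norm-continuous on $\Wrm^{\A}_0(\Omega)$, hence weakly lower semi-continuous; consequently its negative is weakly upper semi-continuous. The perimeter term is a constant in $u$. Passing to the limit along the maximizing sequence yields
\[
I_\Omega(w_A,A) \;\ge\; \limsup_{n \to \infty} I_\Omega(u_n,A) \;=\; \sup_{u \in \Wrm^{\A}_0(\Omega)} I_\Omega(u,A),
\]
so $w_A$ attains the supremum. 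Strict concavity of $I_\Omega(\,\cdot\,,A)$, which follows from the strict coercivity of the quadratic part together with \eqref{poincare2}, guarantees uniqueness of the maximizer; this will be useful for the bijection in the remaining part of the theorem.

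Finally, testing with $u \equiv 0$ gives $I_\Omega(0,A) = \Per(A;\overline\Omega) \ge 0$, so the supremum, and therefore $I_\Omega(w_A,A)$, is non-negative. This completes the claim. The argument is a textbook application of the direct method to a strictly concave quadratic functional on a Hilbert space; the only input specific to the present abstract setting is the Poincar\'e inequality for operators of gradient form supplied by Remark \ref{poincare22}, so I do not expect any serious obstacle. The identification $w = w_A$ with the unique distributional solution of the state equation \eqref{state eq} (needed for the bijection in the full statement of Theorem \ref{existence}) would be obtained separately by computing the Euler--Lagrange equation of the strictly concave maximization and invoking uniqueness.
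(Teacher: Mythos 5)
Your proposal is correct and follows essentially the same direct-method argument as the paper: a priori bounds on maximizing sequences from the ellipticity \eqref{eq:ellipticity} together with the Poincar\'e inequality \eqref{poincare2}, extraction of a weakly convergent subsequence, weak upper semi-continuity of $u \mapsto I_\Omega(u,A)$ via concavity of the quadratic part, and non-negativity by testing with $u \equiv 0$.
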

The tensor $\sigma_A$ is a positive definite tensor and therefore the mapping
\[
 u \mapsto I_\Omega(u,A) = \int_\Omega 2fu - \sigma_A \A u \cdot \A u \, \text{d}x + \text{Per}(A;\overline\Omega)
\]
is strictly concave. Observe that $\sup_{u \in \mathrm W^{\A}_0(\Omega)} I_\Omega(u,A) \ge \text{Per}(A;\overline\Omega)$; indeed, we may take $u \equiv 0 \in \mathrm W^{\A}_0(\Omega)$. Hence,
\begin{equation}\label{positive}
 \sup_{u \in \Wrm^{\A}_0(\Omega)} I_\Omega(u,A) \ge \text{Per}(A;\overline\Omega)\ge 0.
\end{equation}
Because of this, we may find a maximizing sequence $\{w_h\}$ in $\mathrm W^{\A}_0(\Omega)$, i.e., 
\[
 I_\Omega(w_h,A) \to \sup_{u \in \mathrm W^{\A}_0(\Omega)} I_\Omega(u,A), \quad \text{as $h$ tends to infinity}.
\]
Even more, one has from \eqref{eq:ellipticity} that
\[
 -\frac{1}{M}\|\A w_h\|^2_{\Lrm^2(\Omega)} \ge -\int_\Omega \sigma_A \A w_h \cdot \A w_h \, \text{d}x 
\]
and consequently from \eqref{positive} and \eqref{poincare2} one infers that
\begin{equation}\label{muere}
C(\Omega)^{-1} \cdot \limsup_{h \to \infty}  \frac{1}{M}\|w_h\|^2_{\Lrm^2(\Omega)} \le  \limsup_{h \to \infty} \frac{1}{M}\|\A w_h\|^2_{\Lrm^2(\Omega)} \le 2\|f\|_{\Lrm^2(\Omega)}\cdot\limsup_{h \to \infty}  \|w_h\|_{\Lrm^2(\Omega)}.
\end{equation}
A fast calculation shows that $\|w_h\|_{\Lrm^2(\Omega)} \le 2MC(\Omega)\|f\|_{\Lrm^2(\Omega)}$;
in return, (\ref{muere}) also implies that
\[
  \limsup_{h \to \infty} \|\A w_h\|^2_{\Lrm^2(\Omega)} \le 4C(\Omega)M^2\|f\|^2_{\Lrm^2(\Omega)}.
\]
Hence, using again the compactness property of $\A$, we may pass to a subsequence (which we will not relabel) and find 
$w_A \in \mathrm W^{\A}_0(\Omega)$ with
\[
 w_h \to w_A \quad \text{in } \Lrm^2(\Omega;\R^d), \qquad \A w_h \wc \A w_A \quad \text{in }  \Lrm^2(\Omega;\R^{dN^k}).
\]
The concavity of $-\sigma_A z \cdot z$ is a well-known sufficient condition for the upper semi-continuity of the functional $\A u \mapsto -\int_\Omega \sigma_A \A u \cdot \A u$.  Therefore, 
\[
 \sup_{u \in \mathrm W^{\A}_0(\Omega)} I_\Omega(u,A) = \lim_{h \to \infty} I_\Omega(w_h,A) \le I_\Omega(w_A,A). 
\]This proves the claim. 

Now, we use {\it Claim 1} to find a minimizing sequence $\{A_h\}$ for $A \mapsto I_\Omega(w_{A},A)$.
Since the uniform bound (\ref{muere}) does not depend on $A$, we may again assume (up to a subsequence) that there exists $\tilde w \in \mathrm W^{\A}_0(\Omega)$ such that
\[
 w_{A_h} \to \tilde w \quad \text{in } \Lrm^2(\Omega;\R^d), 
 \qquad \A w_{A_h} \wc \A \tilde w \quad \text{in } \Lrm^2(\Omega;\R^{dN^k}), \quad \text{and} \quad \A^* (\sigma_{A_h }\A w_{A_h}) = f.
\]
Even more, since $\{A_h\}$ is minimizing, it must be that $\sup_h \{\Per(A_h;B_R)\} < \infty$, for some ball $B_R$ properly containing $\Omega$, and thus 
(for a further subsequence) there exists a set  $\tilde A \subset \R^N$ of locally finite perimeter with $\tilde A \cap \Omega^c \equiv A_0 \cap \Omega^c$ and such that
\[
 \mathds 1_{A_h} \to \mathds 1_{\tilde A} ~\text{in }  \Lrm^1(B_R), \qquad |\mu_{\tilde A}|(B_R) \le \liminf_{h \to \infty} \; |\mu_{\tilde A_h}|(B_R).
\]
Therefore
\begin{equation}\label{maña}
\begin{split}
\Per(\tilde A;\overline \Omega &) =
 |\mu_{\tilde A}|(B_R) - |\mu_{A_0}|(B_R \setminus \overline \Omega) \\ 
 & \le  \liminf_{h \to \infty} |\mu_{A_h}|(B_R) - |\mu_{A_0}|(B_R \setminus \overline \Omega) = \liminf_{h \to \infty}  \;\Per(A_h;\overline \Omega)
 \end{split}
\end{equation}
A consequence of Lemma \ref{compensated} is that
\begin{equation}\label{suff}
\A^* (\sigma_{\tilde A}\A \tilde w) = f \quad \text{ in $\mathcal D'(\Omega;\R^d)$, \quad and \quad $\int_\Omega \sigma_{A_h} \A w_{A_h} \cdot \A w_{A_h} \to \int_{\Omega}\sigma_{\tilde A} \A \tilde w \cdot \A \tilde w$}.\footnote{The convergence of the total energy is not covered by Lemma \ref{compensated}; however, this can be deduced using integration by parts and the fact that $w_h$ has zero boundary values for every $h \in \mathbb N$.}
\end{equation}
By taking the limit as $h$ goes to infinity we get from \eqref{maña} and the convergence above that
\[
 \min_A \; \sup_{u \in \Wrm^{\A}_0(\Omega)} \; I_\Omega (u,A) = \lim_{h \to \infty} ~ I_{\Omega}(w_{A_h},A_h) \ge
 I_{\Omega}(\tilde w,\tilde A) = I_\Omega(w_{\tilde A},\tilde A),
\] 
where the last equality is a consequence of the identity $\tilde w = w_{\tilde A}$ which can be easily derived by using the equation and the strict concavity of $I_\Omega$ in the first variable. 
Thus, the pair $(w_{\tilde A}, \tilde A)$ is a solution to problem \eqref{P}.

The equivalence of problem \eqref{P} and problem (\ref{intro1}) under the state constraint (\ref{state eq}) follows easily from (\ref{suff}), the strict concavity of $I_\Omega(\cdot,A)$, and a simple integration by parts argument. 
 \end{proof}

\section{The energy bound: proof of Theorem \ref{thm2}}\label{CE}

Throughout this section and for the rest of the manuscript we 
fix $\A : \Lrm^2(\Omega;\R^d)\to \Wrm^{-k,2}(\Omega;\R^{dN^k})$ in the class of operators of gradient form. Accordingly,
the notations $Z$ and $\B$ shall denote the subspace of
$\R^{dN^k}$ and the homogeneous operator associated to $\A$  (see Definition \ref{gto}).
We will also write $(w,A)$ to denote a particular solution of problem \eqref{P}. 

Consider the energy $J_\omega : \Lrm^2(\Omega;Z) \times \mathfrak B(\Omega) \to \R$ defined as
\[
J_{\omega}(v,E) \coloneqq \int_{\omega} \sigma_E v \cdot v \dd y \; + \; \Per(E;\omega), \qquad \text{for $\omega \subset \Omega$ an open set}. 
\]

The goal of this section is to prove a local bound for the map $x \mapsto J_{B_r(x)}(\A w,A)$. More precisely, we aim to prove
that for every compactly contained subset $K$ of $\Omega$ there exists a positive number $\Lambda_K$ such that
\begin{equation}\label{eq:upper}
 J_{B_r(x)}(\A w,A) \le \Lambda_K r^{N-1} \qquad \text{for all $x \in K$ and every $r \in (0,\dist(K,\partial \Omega))$}. 
\end{equation}
 Our strategy will be the following.  We first define a one-parameter family $J^\varepsilon$ of perturbations of $J_{B_1}$ in the perimeter term. In Theorem \ref{gamma} we show that, as the perimeter term
 vanishes, these perturbations $\Gamma$-converge (with respect to the $\Lrm^2$-weak topology) to the relaxation of the energy
 \[
  w \mapsto  \int_\Omega W(\A w) \, \text{d}x,
 \]
for which we will assume certain regularity properties (cf. property (\ref{eq:assumption})). Then, using a compensated compactness argument, we prove Theorem \ref{thm2} (upper bound) by transferring the regularity properties of the relaxed problem to our original problem.

Before moving forward, let us shortly discuss how the higher integrability property (\ref{eq:assumption}) stands next to the standard assumption that the materials
$\sigma_1$ and $\sigma_2$ are well-ordered.

\subsection{A digression on the regularization assumption}

As commented beforehand in the introduction, a key assumption in the proof of the upper bound \eqref{eq:upper} is that {\it generalized} local minimizers of the energy
\[
 u \mapsto \int_{B_1} W(\A u) \, \text{d}y, \qquad \text{where $u \in \Wrm^{\A}(B_1)$},
\]
possess improved decay estimates. More precisely, we require that {\it local}  minimizers $\tilde u$ of the functional
\begin{equation}\label{dig}
 u \mapsto \int_{B_1} Q_{\B}W(\A u)\, \text{d}y, \qquad \text{where $u \in \Wrm^{\A}(B_1)$},
\end{equation}
possess a higher integrability estimate of the form 
\begin{equation*}\label{eq:campanato}
[\A \tilde u]^2_{\Lrm^{2,N-\delta}(B_{1/2})} \le c\| \A \tilde u\|^2_{\Lrm^2(B_1)} \quad \text{for some $\delta \in [0,1)$}. 	\tag{Reg}
\end{equation*}
Only then, we will be able to 
transfer a decay estimate of order $\rho^{N-1}$ to solutions of our original problem. 

\begin{remark}[the case of gradients] In the case $\A = \nabla$, condition \eqref{eq:campanato} boils down to regularity above the critical $\Crm^{0,1/2}$ local regularity. More specifically, 
	\[
\frac{1}{r^{N- \delta + 2}}	\int_{B_r(x)} |w - (w)_{r,x}|^2 \,\text{d}y \le [\nabla w]^2_{\Lrm^{2,\,N-\delta}(B_{1/2})} \le c\| \nabla w\|^2_{\Lrm^2(B_1)} \qquad \text{for all $B_r(x) \subset B_{1/2}$}.
	\]
	By Poincar\'e's inequality and Campanato's Theorem one can easily deduce (cf. \cite{LinKohn99}) that \[w \in \Crm^{0,\frac{1}{2} + \varepsilon}_{\text{\textnormal{loc}}}(B_{1/2}).\]
	\end{remark}
	
	Let us give a short account of some cases where one may find \eqref{eq:assumption} to be a {\it natural assumption}.

\subsubsection{The well-ordered case} The notion of well-ordering in Materials Science is not only justified as the comparability of two materials, 
one being at least {\it better} than the other. It has also been a consistent assumption when dealing with optimization problems because it allows 
explicit calculations. See for example \cite{Hash63,AllaireKohn93,AllaireKohn94}, where the authors discuss how the well-ordering assumption plays a role in proving
the optimal lower bounds of an effective tensor made-up by two materials. 
If $\sigma_1$ and $\sigma_2$ are well-ordered, say $\sigma_2 \ge \sigma_1$ as quadratic forms, then $W(P) = \sigma_2 P \cdot P$.
Hence, by Lemma \ref{lem:cosntant}, the desired higher integrability \eqref{eq:assumption} holds with $\delta = 0$.

\subsubsection{The non-ordered case} Applications for this setting are mostly reserved for the {\it scalar case}. In this particular case  one can ensure that $Q_{\B}W = W^{**}$, where $W^{**}$ is the convex
envelope of $W$. For example, one may consider an optimal design problem involving the linear conductivity equations for two dielectric materials which happen to be incomparable 
as quadratic forms. In this setting, it is not hard to see that indeed $QW = W^{**}$ and even that $W^{**} \in \Crm^{1,1}(\R^{dN^k},\R)$. In dimensions $N =2,3$, one can employ a 
Moser-iteration technique for the dual problem as the one developed in \cite{CarMull02} to show better regularity of minimizers of (\ref{dig}).

 Regarding the case of systems, if no well-ordering of the materials is assumed,
it is not clear to us that \eqref{eq:assumption} necessary holds (compare to \cite{Evans86,SvYan02}). 

\subsection{Proof of Theorem \ref{thm2}}

We define an $\varepsilon$-perturbation of $v \mapsto \int_{B_1} \sigma_A v \cdot v$ as  follows. Consider the 
functional 
\begin{equation}\label{I}
 (v,A) \mapsto J^\varepsilon(v,A) \coloneqq \int_{B_1} \sigma_A v \cdot v \, \text{d}y ~ + ~  \varepsilon^2 \Per(A;B_1), \quad \text{for $\epsilon \in [0,1]$}; \qquad J \coloneqq J^1.
\end{equation}
 By a scaling argument one can easily check that
 \begin{equation}\label{scaling}
  \varepsilon^2 J(v,A) = J^\varepsilon (\varepsilon v,A).
 \end{equation}
Furthermore, 
\begin{equation}\label{scaling2}
\text{$v$ is a local minimizer of $J(\,\cdot\,,A)$ if and only if $\varepsilon v$ is a local minimizer of 
$J^\varepsilon(\,\cdot\,,A)$}. 
\end{equation}
We also consider the following one-parameter family of functionals:
 \begin{equation}\label{J}
  v \mapsto G^\varepsilon(v) \coloneqq \begin{cases}
                         \displaystyle{\min_{A \in \mathfrak B(B_1)}} ~  J^\varepsilon(v,A) &\quad \text{if \quad $v \in \Lrm^2(\Omega;Z)$ and $\B v = 0$},\vspace{2pt}
  \\
  \infty & \hfill \text{otherwise}.
                        \end{cases}
\end{equation}
The next result characterizes the $\Gamma$-limit of these functionals as $\varepsilon$ tends to zero.

\begin{theorem}\label{gamma} The $\Gamma$-limit of the functionals $G^\varepsilon$, as $\varepsilon$ tends to zero, and with respect to the weak-$\Lrm^2$ topology
is given by the functional 
\begin{equation}
 G(v) \coloneqq \begin{cases}
            \displaystyle \int_{B_1}Q_{\mathcal B}W(v) \, \textnormal{d}y 
            & \qquad \text{if \quad $v \in \Lrm^2(\Omega;Z)$ and $\B v = 0$},\vspace{2pt}
            \\
            \displaystyle \infty & \hfill \text{else}.
            \end{cases}
\end{equation}
\end{theorem}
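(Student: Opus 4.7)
The plan is to establish the $\Gamma$-convergence with respect to the weak-$\Lrm^2$ topology by verifying the two defining conditions separately. The liminf inequality will follow at once from the fundamental theorem for Young measures (Theorem~\ref{fundamental}) combined with the Jensen-type bound for $\B$-free Young measures (Theorem~\ref{characterization}); the recovery sequence will be built from the periodic test functions entering the definition of $Q_{\B}W$, followed by a natural selection of the design set and a diagonal extraction in $\varepsilon$.

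\emph{Liminf inequality.} Fix $v^\varepsilon \wc v$ in $\Lrm^2(B_1;Z)$ with $\B v^\varepsilon = 0$ and, without loss of generality, $\sup_\varepsilon G^\varepsilon(v^\varepsilon) < \infty$. Let $A^\varepsilon \in \mathfrak B(B_1)$ realise the minimum defining $G^\varepsilon(v^\varepsilon)$. Dropping the nonnegative perimeter term and using the pointwise inequality $\sigma_{A^\varepsilon}(y)\,P \cdot P \ge W(P)$ for all $P \in Z$ gives
\begin{equation*}
G^\varepsilon(v^\varepsilon) \;\ge\; \int_{B_1} W(v^\varepsilon)\dd y.
\end{equation*}
After passing to a subsequence, the $\B$-free sequence $\{v^\varepsilon\}$ generates a $\B$-free Young measure $\mu$ with barycenter $\dprb{\mu_x,\Id} = v(x)$. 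Since $W$ is continuous and bounded below by $0$, assertion (3) of Theorem~\ref{fundamental} together with Theorem~\ref{characterization} yields
\begin{equation*}
\liminf_{\varepsilon \to 0}\, G^\varepsilon(v^\varepsilon) \;\ge\; \int_{B_1}\dprb{\mu_x, W}\dd y \;\ge\; \int_{B_1} Q_{\B}W(v)\dd y \;=\; G(v).
\end{equation*}

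\emph{Recovery sequence.} Given $v \in \Lrm^2(B_1;Z)$ with $\B v = 0$, the definition of $Q_{\B}W$ together with the $\B$-free gluing constructions behind the relaxation theory of Fonseca and M\"uller produces a sequence $\{u^n\}\subset \Lrm^2(B_1;Z)$ with $\B u^n = 0$, $u^n \wc v$ in $\Lrm^2$, and
\begin{equation*}
\int_{B_1} W(u^n)\dd y \;\to\; \int_{B_1} Q_{\B}W(v)\dd y,
\end{equation*}
and the $u^n$ may be taken piecewise smooth since they are patched together from smooth periodic test functions entering the definition of the envelope. Write $g^n(y)\coloneqq (\sigma_2 - \sigma_1)u^n(y)\cdot u^n(y)$ and set $A^n_t \coloneqq \{g^n \le t\}$. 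By Sard's theorem one may choose $t_n \to 0$ for which $\partial A^n_{t_n}$ is a smooth hypersurface, so $p_n \coloneqq \Per(A^n_{t_n}; B_1) < \infty$. Dominated convergence as $t_n \to 0$ gives $\int_{B_1}\sigma_{A^n_{t_n}} u^n \cdot u^n \dd y \to \int_{B_1}W(u^n)\dd y$, hence by admissibility of $(u^n,A^n_{t_n})$,
\begin{equation*}
G^\varepsilon(u^n) \;\le\; \int_{B_1}\sigma_{A^n_{t_n}} u^n \cdot u^n \dd y + \varepsilon^2 p_n.
\end{equation*}
A diagonal selection $n(\varepsilon)\to\infty$ with $\varepsilon^2 p_{n(\varepsilon)} \to 0$ then provides the desired recovery sequence $v^\varepsilon \coloneqq u^{n(\varepsilon)}$.

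\emph{Main obstacle.} The technical heart of the argument is the recovery step: one must pair a $\B$-free approximant of $Q_{\B}W(v)$ with a Borel set of finite perimeter while preserving the $\B$-free constraint. The explicit periodic construction of $Q_{\B}W$ makes the $u^n$ regular enough for Sard's theorem to apply and thereby control the topology of the interface sets $A^n_{t_n}$; the perimeters $p_n$ may grow rapidly, but the scaling $\varepsilon^2 p_n$ together with a diagonal extraction absorbs this growth. This is precisely the structural role of the small coefficient on the perimeter term in the definition of $G^\varepsilon$.
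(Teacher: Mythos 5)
Your overall strategy --- liminf via Young measures and the $\B$-free Jensen inequality, limsup by pairing a recovery sequence for the relaxed Dirichlet energy with a near-optimal finite-perimeter design and then diagonalising in $\varepsilon$ --- matches the paper's plan, and your liminf argument (Young measures plus Theorem~\ref{characterization}) is a valid unpacking of the weak-$\Lrm^2$ lower semicontinuity of $v\mapsto\int Q_{\B}W(v)$ that the paper simply cites. There are, however, two gaps in your recovery step. First, a sign error: since $\sigma_A = \mathds 1_A\,\sigma_1 + \mathds 1_{\Omega\setminus A}\,\sigma_2$, the set on which $\sigma_A u^n\cdot u^n = W(u^n)$ is $\{(\sigma_1-\sigma_2)u^n\cdot u^n\le 0\}$, whereas your $A^n_0=\{(\sigma_2-\sigma_1)u^n\cdot u^n\le 0\}$ selects the \emph{larger} of the two quadratic forms, so $\int_{B_1}\sigma_{A^n_t}u^n\cdot u^n$ does \emph{not} tend to $\int_{B_1}W(u^n)$ as $t\downarrow 0$.

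Second, the claim that the recovery sequence $\{u^n\}$ for $\int Q_{\B}W$ ``may be taken piecewise smooth'' is unjustified and is exactly what your appeal to Sard's theorem needs. The standard constructions for relaxation under a $\B$-free constraint freeze $v$ on a fine tiling, insert rescaled periodic oscillations, and then correct to the kernel of $\B$ via a Fourier multiplier; the result need not be piecewise smooth, and no such structure is recorded in Proposition~\ref{equi-p} or Theorem~\ref{characterization}. The paper avoids this altogether: Lemma~\ref{equi} produces a $2$-equi-integrable recovery sequence $\{v_h\}$ with $W(v_h)\wc Q_{\B}W(v)$ in $\Lrm^1(B_1)$, after which one takes the \emph{exact} argmin set $A_h$ and approximates it in measure by a smooth set $A'_h$ of finite (but $h$-dependent) perimeter; the energy error on $A_h\Delta A'_h$ is then controlled by $2$-equi-integrability rather than by smoothness or a level-set selection. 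Your diagonal choice $n(\varepsilon)$ plays the same role as the paper's monotone reparametrisation $K(\varepsilon)$, so once the sign is fixed and the Sard step is replaced by the measure-density approximation plus equi-integrability, your scheme aligns with the paper's proof.
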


\begin{proof} We divide the proof into three steps. First, we will prove the following auxiliary lemma.

\begin{lemma}\label{equi}
 Let $\omega \subset \R^N$ be an open and bounded domain. Let $p > 1$ and let $F:\R^{dN^k} \to [0,\infty)$ be a continuous integrand with $p$-growth, i.e., 
 \[0 \le F(P) \le C(1 + |P|^p), \qquad P \in \R^{dN^k}.\] If $v \in \Lrm^p(\omega;Z)$ and $\B v = 0$, then
 there exists a $p$-equi-integrable recovery sequence $\{v_h\} \subset \Lrm^p(\omega;Z)$ for $v$ 
 such that
 \[
  \B v_h = 0 \qquad \text{and} \qquad F(v_h) \wc Q_{\mathcal B}F(v) \quad \text{in $\Lrm^1(\omega)$}.
 \] 
\end{lemma}
\begin{proof}
 Since $v \mapsto \int_\omega Q_{\B} F(v)$ is the lower semi-continuous envelope of $v \mapsto \int_{\omega} F(v)$ (see \eqref{eq:integral}-\eqref{eq:integralq}) with respect to the weak-$\Lrm^p$ topology, 
we may find a sequence $\{v_h\}$ with the following properties:
\[ \B v_h = 0,  \qquad v_h \stackrel{\Lrm^p}\wc v,\]
and
\[
 \int_{\omega} Q_{\mathcal B}F(v) \, \text{d}x \ge \int_{\omega} F(v_h) \, \text{d}x - \frac{1}{h}.
\]
Passing to a subsequence if necessary, we may assume that the sequence $\{v_h\}$ generates a $\B$-free Young measure which we denote by $\mu$. We then apply 
\cite[Lemma 2.15]{FonsecaMuller99} to find a $p$-equi-integrable sequence $\{v'_h\}$ (with $\B v_h = 0$) generating
the same Young measure $\mu$. On the one hand, the
Fundamental Theorem for Young measures (Theorem \ref{fundamental}) and the fact that $\{v_h\}$ generates $\mu$ yield
\[
 \liminf_{h \to \infty} \int_{\omega} F(v_h) \, \text{d}x \ge \int_{\omega} \langle \mu_x,F\rangle \, \text{d}x.
\]
On the other hand, due to the same theorem and the equi-integrability of the sequence $\{|v'_h|^p\}$
one gets the convergence $F(v'_h) \wc \langle \mu_x,F\rangle \in \Lrm^1$. In other words, 
\[
  \lim_{h \to \infty}  \int_{\omega} F(v'_h) \, \text{d}x = \int_{\omega} \langle \mu_x,F\rangle \, \text{d}x.
\]
The three relations above yield 
\begin{equation}\label{perro}
 \int_{\omega} Q_{\mathcal B}F(v) \, \text{d}x \ge \limsup_{h \to \infty}  \int_{\omega} F(v_h) \ge 
 \int_{\omega} \langle \mu_x,F\rangle \, \text{d}x = \lim_{h \to \infty}  \int_{\omega} F(v'_h) \, \text{d}x
 \ge  \int_{\omega} Q_{\mathcal B}F(v) \, \text{d}x.
\end{equation}
We summon the characterization for $\B$-free Young measures from Theorem \ref{characterization} to observe that 
\[ \langle \mu_x,F\rangle \ge Q_{\B} F(\langle \mu_x, \Id \rangle) = Q_{\B} F(v(x)) \quad \text{a.e.} \; x \in \omega.
\]
This inequality and
(\ref{perro}) imply
\[
 \langle \mu_x,F\rangle = Q_{\B}F(v(x)) \quad \text{a.e.} \; x \in \omega.
\]
We conclude by recalling that $F(v'_h) \wc \langle \mu_x,F\rangle$ in $\Lrm^1(\omega)$.
 \end{proof}

\noindent {\bf The lower bound.} Let $v \in \Lrm^2(B_1;Z)$ and let $\{v_\varepsilon\}$ be a sequence in $\Lrm^2(B_1;Z)$ such that $v_\varepsilon \wc v$ in $\Lrm^2(B_1;Z)$.
 We want to prove that
 \[
  \liminf_{\varepsilon \downarrow 0} G^\varepsilon(v_\varepsilon) \ge G(v).
 \]
 Notice that, we may reduce the proof to the case where $\B v_\varepsilon = 0$ for every $\varepsilon$.
%
From the inequality $\sigma_A \ge W \ge Q_{\mathcal B}W$ (as quadratic forms), we infer that
\[
 J^\varepsilon(v_\varepsilon) \ge  
 \int_{B_1} Q_{\mathcal B}W(v_\varepsilon) \, \text{d}y.
\]
Next, we recall that $v \mapsto \int_{B_1} Q_{\B}W(v)$ is lower semi-continuous in $\{v \in \Lrm^2(\Omega;Z) : \B v = 0 \}$ with respect to the weak-$\Lrm^2$ topology.
Hence,
\[
\liminf_{\varepsilon \downarrow 0} G^\varepsilon(v_\varepsilon) \ge 
 \int_{B_1} Q_{\mathcal B}W(v) \, \text{d}y.
\]
This proves the lower bound inequality. \\

\noindent {\bf The upper bound.} We fix $v \in \Lrm^2(B_1;Z)$, we want to show that there exists a sequence
$\{v_\varepsilon\}$ in $\Lrm^2(B_1;Z)$ with $v_\varepsilon \wc v$ in $\Lrm^2(B_1;Z)$ and such that 
\[
\limsup_{{\varepsilon \downarrow 0}} G^\varepsilon(v_\varepsilon) \le G(v).
\]
We may assume that $\B v = 0$, for otherwise the inequality occurs trivially. 
Lemma \ref{equi} guarantees the existence of a
$2$-equi-integrable 
sequence $\{v_h\}_{h = 1}^\infty$ for which
\begin{equation}\label{4a}
\B v_h = 0, \quad  v_h\wc v \quad \text{in $\Lrm^2(B_1;Z)$}, \quad \text{and} \quad W(v_h) \wc Q_{\mathcal B}W(v) \quad 
 \text{in $\Lrm^1(B_1)$}.
\end{equation}
Next, we define an $h$-parametrized sequence of subsets of $B_1$ in the following way:
$$A_h \coloneqq \bigg\{x \in B_1 : (\sigma_1 - \sigma_2)v_h\cdot v_h \le 0\bigg\}. 
$$
Using the fact that smooth sets are dense in the broader class of subsets with respect to measure convergence, 
we may take a smooth set $A'_h \subset  B_1$ such that
the following estimates hold for some strictly monotone function $L: \mathbb N  \to \mathbb N$ (with $\lim_{h \to \infty} L(h) = \infty$):
\begin{equation}\label{6a}
|(A'_h \Delta A_h)\cap B_1| = O(h^{-1}), \qquad 
\Per(A'_h;B_1) \le L(h).
\end{equation}
Observe that, by the $2$-equi-integrability of $\{ v_h \}$, one gets that
\begin{equation}\label{6b}
\|(\sigma_{A_h} - \sigma_{A'_h}) v_h\cdot v_h\|_{\Lrm^2(B_1)} \le 
M\|v_h\|^2_{\Lrm^2(S_h)} = O(h^{-1}), \qquad \text{where $S_h \coloneqq A'_h \Delta A_h$}.
\end{equation}

The next step relies, essentially, on stretching the sequence $\{v_h \}$.
Define the $\varepsilon$-sequence 
\[
\overline v_\varepsilon \coloneqq v_{K(\varepsilon)}, \quad \varepsilon \le \frac{1}{L(1)},
\]
where $K: \R_+ \to \mathbb N$ is the piecewise constant decreasing function defined as
\[
K \coloneqq \sum_{h = 1}^\infty h \cdot \mathds 1_{R_h}, \quad R_h \coloneqq \left(\frac{1}{L(h +1)},\frac{1}{L(h)}\right].
\]
\begin{claim}\
\begin{itemize}
\item[1.] $L \circ K (\varepsilon) \le \varepsilon^{-1}$, if $\varepsilon \in (0,L(1)^{-1}]$.
\item[2.] $K (\varepsilon) = h$, where $h$ is such that $\varepsilon \in R_h$. 
\end{itemize}
\end{claim}
\begin{proof}
To prove (1), observe from the strict monotonicity of  $L$ that $\cup_{h = 1}^\infty R_h = (0, L(1)^{-1}]$. A simple calculation gives
\begin{align}
L(K(\varepsilon)) & = L(\sum_{h = 1}^\infty h\cdot \mathds 1_{R_h}(\varepsilon)) = \sum_{h= 1}^\infty L(h) \cdot \mathds 1_{R_h}(\varepsilon) = L(h_0) \cdot \mathds 1_{R_{h_0}}(\varepsilon) \le \frac{1}{\varepsilon},
\end{align}
where $h_0$ is such that $\varepsilon \in R_{h_0}$.
The proof of (2) is an easy consequence of the definition of $K$ and the fact that $\{R_{h}\}$ is a disjoint family of sets. Indeed, if $\varepsilon \in R_h$ then
$K(\varepsilon) =  h \cdot  \mathds 1_{R_h}(\varepsilon) = h$.
%
 \end{proof}
Since $K$ is a decreasing function and $K(\R_+) = \mathbb N \cup \{0\}$, 
it remains true that
\[
\overline v_{K(\varepsilon)} \wc v \; \text{in $\Lrm^2(B_1;\R^{dN^k})$}, \quad \text{as $\varepsilon \to 0$}.
\]
We are now in position to calculate the $\limsup$ inequality:
\begin{multline*}
G^\varepsilon(v_{K(\varepsilon)})
= \min_{A \in \mathfrak B(B_1)} \int_{B_1} \sigma_{A} v_{K(\varepsilon)}\cdot v_{K(\varepsilon)} 
+ \varepsilon^2 \Per(A;B_1) 
 \le \int_{B_1} \sigma_{A'_{K(\varepsilon)}}v_{K(\varepsilon)}\cdot v_{K(\varepsilon)} 
+ \varepsilon^2 \Per(A'_{K(\varepsilon)};B_1)\\
 \le \int_{B_1} \sigma_{A_{K(\varepsilon)}}v_{K(\varepsilon)}\cdot v_{K(\varepsilon)} + O(K(\varepsilon)^{-1}) 
+ \varepsilon^2 L(K(\varepsilon))
 \le \int_{B_1} W(v_{K(\varepsilon)}) + O(\varepsilon) + \varepsilon.
\end{multline*}
Hence, by \eqref{4a}
\begin{align*}
 \limsup_{\varepsilon \downarrow 0} G^\varepsilon(\overline v_\varepsilon) \le  \limsup_{\varepsilon \downarrow 0} \int_{B_1} W(v_{K(\varepsilon)}) = \lim_{h \to \infty} \int_{B_1} W(v_h) = \int_{B_1} Q_{\B}W(v).
\end{align*}
This proves the upper bound inequality. 
 \end{proof}

\begin{corollary}\label{cuchara} Let $\{w_\varepsilon\} \subset \Wrm^{\A}(B_1)$ be a sequence of almost local minimizers of the sequence of functionals
\[
\{u \mapsto G^{\e}(\A u)\}.
\] 
Assume that $\{\A w_{\e}\}$ is $2$-equi-integrable in $B_s$ for every $s < 1$. Assume also that  there exists $w \in \Wrm^{\A}(B_1)$ such that
\begin{gather*}
\A w_\varepsilon \wc \A w \quad \text{in $\Lrm^2(B_1;\R^{dN^k})$}.
\end{gather*}
Then, 
\begin{gather*}
Q_{\B}W(\A w_{\e}) \wc Q_{\B}(\A w) \qquad \text{in $\Lrm^1_{\textnormal{loc}}(B_1)$}.
\end{gather*}
Moreover, $w$ is a local minimizer of $u \mapsto G(\A u)$. 
\end{corollary}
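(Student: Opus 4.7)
The plan is to combine the $\Gamma$-convergence of Theorem \ref{gamma} with Young-measure theory. The two assertions are tightly linked: once almost minimality forces the energies $G^\varepsilon(\A w_\varepsilon)$ down to $G(\A w)$, both the weak-$\Lrm^1_{\textnormal{loc}}$ identification of the limit and the local minimality of $w$ fall out of comparing upper and lower bounds on arbitrary relatively compact open sets $U\subset\subset B_1$.

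First, I would extract a Young measure. Since $\{\A w_\varepsilon\}$ is $2$-equi-integrable on each $B_s\subset\subset B_1$ and satisfies $\B(\A w_\varepsilon)=0$ by exactness of $\A$, up to a subsequence it generates a $\B$-free Young measure $\mu=\{\mu_x\}$ with barycenter $\langle\mu_x,\id\rangle=\A w(x)$ a.e. The quadratic growth of $Q_\B W$ transfers equi-integrability to $\{Q_\B W(\A w_\varepsilon)\}$, so Theorem \ref{fundamental} yields
\[
Q_\B W(\A w_\varepsilon)\wc \bar g\coloneqq \langle\mu_x,Q_\B W\rangle \quad\text{in } \Lrm^1_{\textnormal{loc}}(B_1).
\]
Jensen's inequality for $\B$-free Young measures (Theorem \ref{characterization}), combined with the idempotence $Q_\B(Q_\B W)=Q_\B W$, gives the pointwise bound $\bar g(x)\ge Q_\B W(\A w(x))$ a.e.

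For the converse integral bound, fix $U\subset\subset B_1$ and an arbitrary competitor $\tilde w\in \Wrm^{\A}(B_1)$ with $\A\tilde w=\A w$ outside $U$. The localized form of Theorem \ref{gamma} on $U$ supplies a recovery sequence $z_\varepsilon\in\Lrm^2(U;Z)$ with $\B z_\varepsilon=0$, $z_\varepsilon\wc\A\tilde w$ in $\Lrm^2(U)$, and $\limsup_\varepsilon G^\varepsilon_U(z_\varepsilon)\le \int_U Q_\B W(\A\tilde w)$; by exactness on simply connected subsets, $z_\varepsilon=\A\tilde u_\varepsilon$ locally. A De~Giorgi-type slicing/cutoff on a thin annular transition layer $U\setminus U'$ ($U'\subset\subset U$), using the Poincar\'e inequality~(\ref{poincare1}) together with the compact embedding $\Wrm^{\A}\hookrightarrow \Wrm^{k-1,2}$ from the compactness assumption~(\ref{def:compactness}), lets me splice $\tilde u_\varepsilon$ with $w_\varepsilon$ into an admissible $\hat w_\varepsilon$ which agrees with $w_\varepsilon$ outside $U$ and whose $\A$-image on $U'$ differs from $\A\tilde u_\varepsilon$ by an $\Lrm^2$-error of order $o(1)$. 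Almost local minimality of $w_\varepsilon$ then gives
\[
\int_{U'}Q_\B W(\A w_\varepsilon)\le G^\varepsilon_U(\A w_\varepsilon)\le G^\varepsilon_U(\A\hat w_\varepsilon)+o(1)\le \int_U Q_\B W(\A\tilde w)+o(1).
\]

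Specializing to $\tilde w=w$, letting $\varepsilon\to 0$ and then $U'\nearrow U$ yields $\int_U\bar g\le \int_U Q_\B W(\A w)$; combined with the pointwise bound above this forces $\bar g=Q_\B W(\A w)$ a.e.\ in every $U\subset\subset B_1$, hence pins down the weak limit independently of the subsequence and proves the first assertion. Running the same estimate with an arbitrary competitor $\tilde w$ delivers $\int_U Q_\B W(\A w)\le \int_U Q_\B W(\A\tilde w)$, which is the local minimality of $w$ for $u\mapsto G(\A u)$. I expect the main technical obstacle to be the splicing step: constructing an admissible $\hat w_\varepsilon$ matching $w_\varepsilon$ outside $U$ without spoiling the recovery estimate is delicate for a general $k$-th order operator $\A$, and the exactness and Poincar\'e properties of Definition \ref{gto} are precisely what make the transition-layer construction work.
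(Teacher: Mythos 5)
Your proposal is correct and follows essentially the same strategy as the paper's proof: a Young-measure lower bound via Jensen's inequality for $\B$-free Young measures, combined with a recovery-sequence-plus-cutoff construction (spliced through an annular transition layer using compactness of $\Wrm^{\A}\hookrightarrow\Wrm^{k-1,2}$) to match boundary data and exploit almost minimality for the matching upper bound. The only cosmetic difference is that you derive both the weak $\Lrm^1_{\textnormal{loc}}$ identification and the local minimality simultaneously by varying the competitor $\tilde w$, whereas the paper proves the $\Lrm^1_{\textnormal{loc}}$ convergence first (with $\tilde w=w$) and then runs a separate contradiction argument for local minimality; the two are interchangeable.
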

\begin{proof} 
The first step is to check that
\begin{equation}\label{step1}
Q_{\B}W(\A w_{\e}) \wc Q_{\B}(\A w) \qquad \text{in $\Lrm^1(B_s)$, for every $s < 1$}.
\end{equation}
The sequence $\A w_{\e}$ generates (up to taking a subsequence) a $\B$-free Young measure $\mu : B_1 \to Z$ so that by Theorem \ref{fundamental}, Theorem \ref{characterization} and the local $2$-equi-integrability assumption, 
\begin{equation}\label{merol1}
W(\A w_{\e}') \wc \langle \mu_x , W \rangle \ge Q_{\B}W(\A w) \quad \text{in $\Lrm^1_{\text{loc}}(B_1)$}.
\end{equation}
Fix $s \in (0,1)$ and consider the rescaled functions
\[
w^s_{\e} \coloneqq \frac{w_{\e}(sy)}{s^{k - \frac{1}{2}}}, \qquad w^s \coloneqq \frac{w(sy)}{s^{k - \frac{1}{2}}}.
\]
It is not hard to see that, because of the (almost) minimization properties of $\{w_{\e}\}$, the rescaled sequence $\{w_{\e}^s\}$ is also a sequence of almost local minimizers of the sequence of functionals $\{u \mapsto G(\A u)\}.$\footnote{This scaling has the property that $s^{N-1}J(\A w^s,A^s) =  J_{B_s(x)}(\A w,A)$.} Moreover, $\A w^s_{\e} \wc \A w^s$ in $\Lrm^2(B_1;Z)$.

From the proof of the lower bound in Theorem \ref{gamma}, we may find a 2-equi-integrable recovery sequence $\{v_\varepsilon'\}$ for $v$, i.e., such that $v_{\e}' \wc \A w^s$ and
\[
\lim_{\varepsilon \downarrow 0} G^\varepsilon(v_\varepsilon') = G(\A w^s).
\]

Recall that, by the exactness assumption of $\A$ and $\B$, there are functions $w'_{\e} \in \Wrm^{\A}(B_1)$ such that
\[
v_{\e}' = \A w_{\e} ' \qquad \text{for every $\e > 0$}.
\]
\noindent{\bf A recovery sequence with the same boundary values.} The next step is to show that one may assume, without loss of generality, that $\supp (w_\varepsilon' - w_\varepsilon^s) \subset \subset B_1$.  

We may further assume (without loss of generality) that $\{w^s_{\e}\}$ and $\{w_{\e}'\}$ are $\Wrm^{k,2}$-uniformly bounded, and that $w^s_\varepsilon - w_\varepsilon' \wc 0$ in $\Wrm^{k,2}(B_1;\R^{d})$.


Define
\[
\tilde v_{h,\varepsilon} \coloneqq \A (\varphi_h w_\varepsilon' + (1 - \varphi_h) w^s_\varepsilon) = \varphi_h \A w_\varepsilon'
+ (1 - \varphi_h)\A w^s_\varepsilon + \overbrace{\sum_{\substack{|\beta| \ge 1\\|\alpha| + |\beta| = k}} c_{\alpha\beta} \partial^\alpha (w_\varepsilon' - w^s_\varepsilon) \partial^\beta \varphi_h}^{g(h)};
\]
where, for every $h \in \mathbb N$, $\varphi_h \in \Crm^\infty(B_1;[0,1])$ with $\varphi_h \equiv 1$ in $B_{1 - 1/h}$. Since $\|g(h)\|_{\Lrm^2(B_1)} \to 0$ as $\varepsilon \to 0$, we infer that
\[
\limsup_{\varepsilon \downarrow 0} \|\tilde v_{h,\varepsilon} - \A w_{\e}'\|_{\Lrm^2(B_1)} \le 
\limsup_{\varepsilon \downarrow 0} \|\A w_\varepsilon'\|_{\Lrm^2(B_ 1 \setminus B_{1 - 1/h})}  +  \limsup_{\varepsilon \downarrow 0}\|\A w_\varepsilon\|_{\Lrm^2(B_ 1 \setminus B_{1 - 1/h})}.
\]
We now let $h \to \infty$ and use the $2$-equi-integrability of $\{\A w^s_{\e}\}$ and $\{\A w_{\e}'\}$ to get
\[
\limsup_{h \to \infty}\; \limsup_{\varepsilon \downarrow 0} \|\tilde v_{h,\varepsilon} - \A w_{\e}'\|_{\Lrm^2(B_1)} = 0.
\]
Thus, we may find a diagonal sequence $\tilde v_{\e} = \tilde v_{h(\e),\e} = \A \tilde w^s_{\e}$ which is $2$-equi-integrable, $\supp(w_{\e}^s - \tilde w_{\e}) \subset \subset B_1$, and such that
\[
\lim_{\varepsilon \downarrow 0} \|\A w_{\e}' - \A \tilde w_{\e}\|_{\Lrm^2(B_s)} = O(\e).
\]
In particular, the (almost) local minimizing property of $\{\A w^s_{\e}\}$ 
gives
\[
 \limsup_{\e \downarrow 0} \int_{B_1} W(\A w^s_{\e}) \le \limsup_{\e \downarrow 0} G^\varepsilon(\A w_{\e}^s) \le \limsup_{\e \downarrow 0} G^\varepsilon(\A\tilde w_{\e}) \le \lim_{\e \downarrow 0} G^\varepsilon(\A w_{\e}') = G(\A w^s). 
\]
Rescaling back, the inequality above yields
\[
 \limsup_{\e \downarrow 0} \int_{B_s} W(\A w_{\e}) \le \int_{B_s} Q_{\B}W(\A w),
\] 
which together with \eqref{merol1} proves \eqref{step1}.

\noindent{\bf Local minimizer of $G$.} The second step is to show that $w$ is a local minimizer of $u \mapsto G(\A u)$. We argue by contradiction: assume that $w$ is not a local minimizer of $u \mapsto G(\A u)$, then we would find $s \in (0,1)$ and $\eta \in \Crm^\infty_c(B_s;\R^{dN^k})$ for which
\[
G(\A w) > G(\A w + \A \eta).
\]
Again, using a re-scaling argument, this would imply that
\[
G(\A w^s) > G(\A w^s + \A \eta^s).
\]
Similarly to the previous step, we can find a $2$-equi-integrable recovery sequence $\{\A (\phi^s_{\e} + \eta^s)\}$ of $(\A w^s + \A \eta^s)$ with the property that $\supp(\phi^s_{\e} - w^s_{\e}) \subset \subset B_1$, for every $\e > 0$.
On the other hand, 
 the (almost) minimizing property of $\A w^s_\varepsilon$ and \eqref{step1} yield
\[
G(\A w^s +  \A \eta^s) < G(\A w^s) = \lim_{\varepsilon \downarrow 0} G^\varepsilon(\A w^s_\varepsilon) \le \lim_{\varepsilon \downarrow 0}G^\varepsilon(\A \phi^s_\varepsilon + \A \eta^s) = G(\A w^s + \A \eta^s),
\]
which is a contradiction. This shows that $w$ is a local minimizer of $u \mapsto G(\A u)$.
\end{proof}
Let us recall, for the proof of the next proposition, that the higher integrability assumption \eqref{eq:assumption} on local minimizers $\tilde u$ of $u \mapsto G(\A u)$ reads:
\[
\tag{Reg}[\A \tilde u]^2_{\Lrm^{2,N-\delta}(B_{1/2})} \le c\| \A\tilde u\|^2_{\Lrm^2}(B_1), \quad \text{for some $\delta \in [0,1)$}.
\]
\begin{proposition}\label{12} Let $(w,A)$ be a saddle-point of problem \eqref{P}. 
Assume that the higher integrability condition \eqref{eq:assumption} holds for local minimizers of $u \mapsto G(\A u)$.
Then, for every $K \subset \subset \Omega$ there exists a positive constant $C(K) > 1$ and a smallness constant
 $\rho \in (0,1/2)$ such that at least one of the following properties
\begin{itemize}
 \item[1.] $
   J_{B_r(x)}(\A w,A) \le C(K)r^{N-1},
 $
 \item[2.]$
   J_{B_{\rho r}(x)}(\A w,A) \le \rho^{N - (1 + \delta)/2} J_{B_r(x)}(\A w,A),
 $
\end{itemize}
holds for all $x \in K$ and every $r \in (0,\dist(K,\partial \Omega))$.
Here, 
\[
 J_{B_r(x)}(\A u,A) = \int_{B_r(x)} \sigma_A \A u \cdot \A u \; \dd y \; + \; \Per(A;B_r(x)),
\]
\end{proposition}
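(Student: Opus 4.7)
The plan is a blow-up contradiction argument built on the $\Gamma$-convergence of Theorem \ref{gamma} and the regularity hypothesis \eqref{eq:assumption}. Fix $\rho \in (0,1/2)$ to be chosen later and suppose the conclusion fails with this $\rho$ and arbitrarily large $C$. Then one can find sequences $x_n \in K$ and $r_n \in (0, \dist(K,\partial\Omega))$ with
\[
\lambda_n^2 \coloneqq r_n^{-(N-1)} J_{B_{r_n}(x_n)}(\A w, A) \to \infty, \qquad J_{B_{\rho r_n}(x_n)}(\A w, A) > \rho^{N-(1+\delta)/2}\,\lambda_n^2 r_n^{N-1}.
\]
Define the rescaled pair by $\tilde w_n(y) \coloneqq (\lambda_n r_n^{k-1/2})^{-1} w(x_n + r_n y)$, $\tilde A_n \coloneqq (A - x_n)/r_n$, and $\e_n \coloneqq 1/\lambda_n \to 0$. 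By \eqref{scaling}--\eqref{scaling2}, $(\tilde w_n, \tilde A_n)$ is a saddle point on $B_1$ of the rescaled perturbed functional with source $\tilde f_n$ obeying $\|\tilde f_n\|_{L^\infty} \to 0$, and it satisfies $\tilde J^{\e_n}(\A \tilde w_n, \tilde A_n; B_1) = 1$ together with $\tilde J^{\e_n}(\A \tilde w_n, \tilde A_n; B_\rho) > \rho^{N-(1+\delta)/2}$.

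The ellipticity \eqref{eq:ellipticity} gives $\|\A \tilde w_n\|_{L^2(B_1)}^2 \le M$, and the Caccioppoli inequality (Lemma \ref{rem:cac}) promotes this to $2$-equi-integrability of $\{\A \tilde w_n\}$ on every $B_s \subset\subset B_1$. Up to subtracting a kernel projection and passing to a subsequence, we obtain $\A \tilde w_n \wc \A \tilde w$ weakly in $L^2(B_1)$ for some $\tilde w \in \Wrm^{\A}(B_1)$. I then verify that $\{\tilde w_n\}$ is an almost local minimizer sequence for the functionals $u \mapsto G^{\e_n}(\A u)$: this exploits both halves of the saddle inequality, the vanishing of $\tilde f_n$, and the fact that only local perturbations of both $u$ and the set enter the comparison. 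Corollary \ref{cuchara} then identifies $\tilde w$ as a local minimizer of $u \mapsto \int Q_{\B} W(\A u)$ and provides the weak convergence $Q_{\B} W(\A \tilde w_n) \wc Q_{\B} W(\A \tilde w)$ in $L^1_\textnormal{loc}(B_1)$.

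The regularity hypothesis \eqref{eq:assumption} applied to $\tilde w$ yields
\[
\int_{B_\rho} |\A \tilde w|^2 \dd y \le [\A \tilde w]^2_{L^{2,N-\delta}(B_{1/2})}\,\rho^{N-\delta} \le c M \rho^{N-\delta}.
\]
On the other hand, passing to the limit in the inequality $\tilde J^{\e_n}(\A \tilde w_n, \tilde A_n; B_\rho) > \rho^{N-(1+\delta)/2}$ via the $\Gamma$-$\liminf$ lower bound of Theorem \ref{gamma} combined with the strong-type convergence provided by Corollary \ref{cuchara} (so that the full energy concentrates on $\int Q_{\B} W(\A \tilde w)$ and the perimeter contribution is captured by the $\Gamma$-limit) gives
\[
\int_{B_\rho} Q_{\B} W(\A \tilde w) \dd y \ge \rho^{N-(1+\delta)/2}.
\]
Coupling this with the upper bound $Q_{\B} W(P) \le M|P|^2$ from \eqref{eq:ellipticity} leads to $c M^2 \rho^{N-\delta} \ge \rho^{N-(1+\delta)/2}$, equivalently $\rho^{(1-\delta)/2} \ge 1/(cM^2)$. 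Since $\delta \in [0,1)$ means $(1-\delta)/2 > 0$, choosing $\rho$ small enough at the outset contradicts this inequality and completes the argument.

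The main obstacle is the verification that $\{\tilde w_n\}$ is an almost local minimizer of $u \mapsto G^{\e_n}(\A u)$: the saddle structure makes $\tilde A_n$ the overall minimizer of the compliance-plus-perimeter functional $E(A) = J(\A w_A, A)$, but not literally a minimizer of $A \mapsto \int \sigma_A \A \tilde w_n \cdot \A \tilde w_n + \e_n^2 \Per(A)$ at $\tilde w_n$ frozen. Reconciling these two optimality characterizations, while preserving the compactness coming from Caccioppoli and the smallness of $\tilde f_n$, is the crux of the argument; everything else is a quantitative combination of the $\Gamma$-limit analysis, the assumption \eqref{eq:assumption}, and a judicious choice of $\rho$.
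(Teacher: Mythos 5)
The overall architecture of your argument matches the paper's: a blow-up contradiction, rescaling to $B_1$ with $\e_n^2 = r_n^{N-1}/J_{B_{r_n}(x_n)}(\A w,A)$, invoking Corollary \ref{cuchara} to identify the weak limit as a local minimizer of $u\mapsto\int Q_{\B}W(\A u)$, and closing with \eqref{eq:assumption} and the choice $cM^2\rho^{(1-\delta)/2}<1$. The final numerical contradiction is essentially the paper's.

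However, the proof has a genuine gap at exactly the step you flag as "the main obstacle" and then leave unresolved: you need the rescaled sequence to be an almost local minimizer of $u\mapsto G^{\e_n}(\A u)$, and you do not show it. The paper resolves this in two moves you omit. First, it introduces the dual functional $\tilde I_\Omega(\A u, A) = \int 2\tau_A\cdot\A u - \int \sigma_A\A u\cdot\A u + \Per(A;\overline\Omega)$ and observes that on saddle points $\tilde I_{B_r(x)}(\A w,A) = J_{B_r(x)}(\A w,A)$; this is what converts the inner maximization into a quantity to be minimized jointly with the set. Second, it replaces the rescaled $w_{\e(h)}$ (which solves $\A^*(\sigma_{A_\e}\A u) = f_{\e(h)}$ with a small but nonzero source) by the solution $\tilde w_{\e(h)}$ of the homogeneous equation $\A^*(\sigma_{A_\e}\A u)=0$ with the same boundary data, and shows via integration by parts that $\|\A w_{\e(h)} - \A\tilde w_{\e(h)}\|_{L^2(B_1)}^2 = O(h^{-1})$; only for this replacement can one verify, using the scaling rule \eqref{scaling}--\eqref{scaling2} and the local saddle-point property of $\tilde I^{\e(h)}$, that $\{\tilde w_{\e(h)}\}$ is an almost minimizing sequence for $\{u\mapsto G^{\e(h)}(\A u)\}$. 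Without this construction, Corollary \ref{cuchara} does not apply and the argument breaks.

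A second, smaller issue: you claim that the Caccioppoli inequality (Lemma \ref{rem:cac}) alone gives $2$-equi-integrability of $\{\A\tilde w_n\}$ on compact subsets. Caccioppoli gives local $L^2$ control of $\nabla^k\tilde w_n$ in terms of $\|\tilde w_n\|_{L^2}$ but not equi-integrability of $|\A\tilde w_n|^2$; you would need to add a Gehring/reverse-Hölder step. The paper instead obtains equi-integrability by a compensated-compactness argument: it shows $\sigma_{A_\e}\A\tilde w_\e\cdot\A\tilde w_\e \toweakstar \tilde\tau\cdot\A\tilde w\in L^1$ in $\M^+(B_1)$ (integrating by parts against test functions and using strong $W^{k-1,2}$ convergence) and then invokes positivity and Dunford--Pettis. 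Finally, the phrase "$\Gamma$-$\liminf$ lower bound" is not what does the work when you pass to the limit in $\tilde J^{\e_n}(\A\tilde w_n,\tilde A_n;B_\rho)>\rho^{N-(1+\delta)/2}$: the $\Gamma$-liminf inequality bounds the limit from below by the limiting functional, which points the wrong way here. What you actually need, and what the paper uses, is the strong-type convergence $\sigma_{A_\e}\A\tilde w_\e\cdot\A\tilde w_\e\wc Q_{\B}W(\A\tilde w)$ in $L^1_{\textnormal{loc}}$ from Corollary \ref{cuchara} together with $\e(h)^2\Per(A_{\e(h)};B_\rho)\to 0$.
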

\begin{proof} Let $(w,A)$ be a saddle-point of \eqref{P} and fix $\rho \in (0,1)$ (to be specified later in the proof). We argue by contradiction through a blow-up technique: Negation of the statement would allow us to find a sequence 
$\{(x_h,r_h)\}$ of points $x_h \in K$ and positive radii $r_h \downarrow 0$  for which 
\begin{gather}
  J_{B_{r_h}(x_h)}(\A w,A) > hr_h^{N-1}, \quad \text{and}\label{joder1}\\
  J_{B_{\rho{r_h}}(x_h)}(\A w,A) > \rho^{N - (1 + \delta)/2}J_{B_{r_h}(x_h)}(\A w,A).\label{joder2}
\end{gather}

\noindent {\bf An equivalent variational problem.} It will be convenient to work with a similar variational problem: Consider the saddle-point problem  
\begin{equation*}\label{Ptilde}
\inf\left\{ \sup_{u \in \Wrm^{\A}_0(\Omega)} \tilde I_\Omega(\A u,A) : A \subset \R^N \; \text{Borel set}, \; A \cap \Omega^c \equiv A_0 \cap \Omega^c \right\}, \tag{$\tilde{\rm{P}}$}
\end{equation*}
where
\[
\tilde I_\Omega(\A u,A) \coloneqq  \int_\Omega 2\tau_A \cdot \A u \dd x - \int_\Omega \sigma_{A} \A u \cdot \A u \dd x + \Per(A;\overline \Omega).
\] 
Here we recall the notation $\tau_A \coloneqq \sigma_A\A w_A$, where $w_A \in \Wrm^{\A}_0(\Omega)$ is the unique maximizer of $u \mapsto I_\Omega(u,A)$. It follows immediately from the identity 
\[
\int_\Omega \tau_{A} \cdot \A u \dd x = \int_\Omega f u \dd x \qquad u \in \Wrm^{\A}_0(\Omega),
\]
that saddle-points $(w,A)$ of problem \eqref{P} are also saddle-points of \eqref{Ptilde} and vice versa; hence, in the following we will make no distinction between saddle-points of \eqref{P} and \eqref{Ptilde}. A special property of $\tilde I$ is that, locally, it is always positive on saddle-points $(w,A)$ of \eqref{P}. Indeed, in this case $w = w_A$ and therefore
\begin{equation}\label{transition}
\tilde I_{B_r(x)}(\A w,A) = \int_{B_r(x)} \sigma_A \A w_A \cdot \A w_A + \Per(A;B_r(x)) = J_{B_r(x)}(\A w,A), \qquad B_r(x) \subset \Omega.
\end{equation}

\noindent {\bf A re-scaling argument.} 
We re-scale and translate $B_r(x)$ into $B_1$ by letting
\begin{equation}\label{blow2}
 A^{r,x} \coloneqq \frac{A}{r} - x, \quad f^{r,x}(y)\coloneqq r^{k + \frac{1}{2}}f(ry - x) \to 0 \text{ in $\Lrm^\infty(B_1)$},
\quad \text{and} \quad w^{r,x}(y) \coloneqq
 \frac{w(ry - x)}{r^{k - \frac{1}{2}}}.
 \end{equation}
 A further normalization on the sequence takes place by setting 
 \[
 \varepsilon(h)^2 \coloneqq r^{N-1} \cdot J_{B_{r_h}(x_h)}(\A w,A)^{-1} = O(h^{-1}),
 \]
 and defining
 \[
 A_{\varepsilon(h)} \coloneqq A^{r_h,x_h}, \quad f_{\varepsilon(h)} \coloneqq \varepsilon(h) \cdot f^{r_h,x_h},
  \quad w_{\varepsilon(h)} \coloneqq \varepsilon(h) \cdot w^{r_h,x_h}, \quad \text{and} \quad \tau_{\varepsilon(h)} \coloneqq 
  \sigma_{A_{\varepsilon(h)}} \A w_{\varepsilon(h)}.
 \]


It is easy to check that the scaling rule \eqref{scaling}, and the relations \eqref{joder2} and \eqref{transition} imply
\begin{gather}\label{papa2}
 J^{\varepsilon(h)}(\A w_{\varepsilon(h)},A_{\varepsilon(h)}) = 1, \quad \text{and}\\
  \int_{B_\rho} \sigma_{A_{\varepsilon(h)}} \A w_{\varepsilon(h)} \cdot \A w_{\varepsilon(h)} + 
  {\varepsilon(h)}^2 \Per(A_{\varepsilon(h)};B_\rho) > \rho^{N - (1 + \delta)/2}. 
  \label{papa3}
\end{gather}
In particular, due to the coercivity of $\sigma_1$ and $\sigma_2$, the norms $\| \A w_{\varepsilon(h)}\|^2_{\Lrm^2(B_1)}$ are $h$-uniformly bounded by $M$.

\noindent{\bf Local almost-minimizers of $G^{\varepsilon(h)}$.} The next step is to show that $\{w_{\varepsilon(h)}\}$ is $O(\e)$-close in $\Lrm^2$ to a sequence $\{\tilde w_{\e}\}$ of almost minimizers of $\{u \mapsto G^{\varepsilon(h)}(\A u)\}$. 
Observe that $w_{\varepsilon(h)}$ is the unique solution to the equation
\[
\A^*(\sigma_{A_\varepsilon} \A u) = f_{\varepsilon(h)}, \qquad \text{$u \in \Wrm^{\A}_{w_{\varepsilon(h)}}(B_1)$}. 
\]
Let $\tilde w_{\varepsilon(h)}$ be the unique minimizer of $u \mapsto J^{\varepsilon(h)}(\A u,A_{\varepsilon(h)})$ -- see \eqref{I} -- in the affine space 
$W^{\A}_{w_{\varepsilon(h)}}(B_1)$. Thus, in particular, $\tilde w_{\varepsilon(h)}$ is the unique solution of the equation
\[
\A^* (\sigma_{A_{\varepsilon(h)}} \A u) = 0, \qquad \text{$u \in\Wrm^{\A}_{w_{\varepsilon(h)}}(B_1)$}.
\]
A simple integration by parts, considering that $\tilde w_{\e}(h) -w_{\e}(h) \in \Wrm^{\A}_0(B_1)$,  gives the estimate
\begin{equation}\label{eq:orden}
\|\A w_{\varepsilon(h)} - \A \tilde w_{\varepsilon(h)}\|^2_{\Lrm^2(B_1)} \le C(B_1) \cdot M^2\|f_{\varepsilon(h)}\|^2_{\Lrm^2(B_1)} = O(h^{-1}),
\end{equation}
where $C(B_1)$ is the Poincar\'e constant from \eqref{poincare2}; and therefore $\|w_{\varepsilon(h)} - \tilde w_{\varepsilon(h)}\|_{\Wrm^{k,2}_0(B_1)} = O(h^{-1})$. 

Lastly, we use strongly the fact that $(w,A)$ is a saddle-point of \eqref{P} to see that $\{(w_{\varepsilon(h)},A_{\varepsilon(h)})\}$ is also a {\it local} saddle-point of the energy
\[
(u,E) \mapsto \tilde I^{ \varepsilon(h)}(\A u,E) \coloneqq \int_{B_1} 2 \tau_E \cdot \A u \dd y  -  \int_{B_1} \sigma_E \A u \cdot \A u \dd y \; + \; \varepsilon(h)^2\Per(E;B_1).
\]
Moreover, by \eqref{scaling}, \eqref{transition} and \eqref{eq:orden} one has that 
\begin{equation}\label{eq:juntos}
\tilde I^{ \varepsilon(h)}(\A w_{\varepsilon(h)},A_{\varepsilon(h)}) = J^{\varepsilon(h)}(\A w_{\varepsilon(h)},A_{\varepsilon(h)}) = J^{\varepsilon(h)}(\A \tilde w_{\varepsilon(h)},A_{\varepsilon(h)}) + O(h^{-1}).
\end{equation}
An immediate consequence of the two facts above is that $\{\tilde w_{\varepsilon(h)}\}$ is a sequence of {\it local}  almost minimizers of the sequence of functionals $\{u \mapsto G^{\varepsilon(h)}(\A u)\}$. The local (almost) minimizing properties of the sequence 
$\{\tilde w_{\varepsilon(h)}\}$ -- with respect to the functionals $\{u \mapsto G^{\varepsilon(h)}(\A u)\}$ -- are not affected by subtracting~{$\A$-free} fields; hence, using the compactness assumption of $\A$ once more, we may assume without loss of generality that $\sup_h \|\tilde w_{\varepsilon(h)}\|_{\Wrm^{k,2}(B_1)} < \infty$. Upon passing to a further subsequence, we may also assume that there exists $\tilde w \in 
\Wrm^{k,2}(B_1)$ such that
\[
\tilde w_{\varepsilon(h)} \wc \tilde w \quad \text{in $\Wrm^{k,2}(B_1;\R^d)$}.
\]

\noindent{\bf Equi-integrability of $\{\A \tilde w_{\e(h)}\}$.} The last but one step is to show that $\{\A \tilde w_{\e}\}$ is a $2$-equi-integrable sequence in $B_s$, for every $s < 1$. 

Since $\sigma_{A_\varepsilon}$ is uniformly bounded, there exists $\tilde \tau \in \Lrm^2(B_1;\R^{dN^k})$ such that (upon passing to a further subsequence)
\begin{equation}\label{integration}
\sigma_{A_{\varepsilon(h)}}\mathcal A \tilde w_{\varepsilon(h)} =: \tilde \tau_{\varepsilon(h)} \wc \tilde \tau \quad \text{in }  \Lrm^2(B_1;\R^{dN^k}), \quad \mathcal A^* \tilde \tau_{\varepsilon(h)} = \A^* \tilde \tau = 0. 
\end{equation}
Let $\varphi \in \mathcal D(B_1)$ and fix $\varepsilon > 0$, integration by parts yields 
\[\langle  \tilde \tau_{\varepsilon(h)}\cdot \mathcal A \tilde w_{\varepsilon(h)}, \varphi\rangle = 
 - \sum_{\substack{|\beta| \ge 1\\|\alpha| + |\beta| = k}} c_{\alpha\beta} \langle \tilde \tau_{\varepsilon(h)},
 \partial^\alpha \tilde w_{\varepsilon(h)} \partial^\beta \varphi
\rangle \qquad  \quad c_{\alpha,\beta} \in \R.
\]
Since the term in the right hand side of the equality depends only on $\nabla^{k-1} \tilde w_{\varepsilon(h)}$, the strong convergence $\tilde w_{\e} \to \tilde w$ in $\Wrm^{k-1,2}(B_1;\R^d)$ gives
\[
\lim_{\varepsilon \to 0} \langle  \tilde \tau_{\varepsilon(h)}\cdot \mathcal A \tilde w_{\varepsilon(h)}, \varphi\rangle =
 - \sum_{\substack{|\beta| \ge 1\\|\alpha| + |\beta| = k}} c_{\alpha\beta} \langle \tilde \tau,
 \partial^\alpha \tilde w \partial^\beta \varphi
\rangle
=\langle  \tilde \tau\cdot \mathcal A \tilde w, \varphi\rangle.
\]
Therefore, 
\[
\sigma_{A_{\varepsilon(h)}} \A \tilde w_{\varepsilon(h)} \cdot \A \tilde w_{\varepsilon(h)} = \tilde \tau_{\varepsilon(h)} \cdot \A \tilde w_{\varepsilon(h)} \toweakstar \tilde \tau \cdot \A \tilde w \in \Lrm^1(B_1) \qquad \text{weakly* in $\M^+(B_1)$}.
\]
The positivity of $\sigma_{A_\varepsilon}\mathcal A \tilde w_\varepsilon\cdot\mathcal A \tilde w_\varepsilon$, the Dunford-Pettis Theorem and the convergence above imply that the sequence
\[\{\sigma_{A_\varepsilon}\mathcal A \tilde w_\varepsilon\cdot\mathcal A \tilde w_\varepsilon\} \qquad \text{is equi-integrable in $B_s$; for every $s < 1$}.\]
In turn, due to the uniform coerciveness and boundedness of $\{\sigma_{A_\varepsilon}\}$, both sequences $\{\mathcal A \tilde w_\varepsilon\}$ and
$\{\tilde \tau_\varepsilon\}$ are $2$-equi-integrable in $B_s$; for every $s < 1$.\\

\noindent {\bf The contradiction.} We are in position to apply Proposition \ref{cuchara} to the sequence $\{\tilde w_{\e}\}$, which in particular implies
\begin{equation}\label{merol2}
\begin{split}
\e(h)^2\Per(A_{\e(h)};B_\rho) & \to 0,\\
\sigma_{A_{\varepsilon(h)}} \A \tilde w_{\varepsilon(h)} \cdot \A \tilde  w_{\varepsilon(h)}   & \wc Q_{\B} W(\A \tilde w) \le M|\A \tilde w|^2, \quad \text{in $\Lrm^1_{\text{loc}}(B_1)$},
\end{split}
\end{equation}
and that $w$ is a local minimizer of $u \mapsto G(\A u)$.
On the other hand, the higher integrability assumption \eqref{eq:assumption} tells us that 
\begin{equation}\label{eq:contramadre}
\begin{split}
 [\A \tilde w]^2_{\Lrm^{2,\,N- \delta} (B_{1/2})}&  \le c\|\A \tilde w \|_{\Lrm^2(B_1)}^2.
\end{split}
\end{equation}
We set the value of $\rho \in (0,1/2)$ to be such that $2 c M^2 \rho^{(1 - \delta)/2} \le 1$. Taking the limit in \eqref{papa2} and \eqref{papa3}, using Fatou's Lemma, \eqref{eq:orden}, \eqref{eq:juntos}, \eqref{merol2}  and \eqref{eq:contramadre}, we get
\begin{align*}
\frac{1}{M} \|\A \tilde w\|^2_{\Lrm^2(B_1)} &  \le \lim_{h \to \infty} J^{\varepsilon(h)}(\A \tilde w_{\varepsilon(h)},A_{\varepsilon(h)}) = 1 
\\
& \le \bigg(\frac{1}{\rho^{N - (1 +\delta)/2}}\bigg)  \|Q_{\B}W(\A \tilde w)\|_{\Lrm^1(B_\rho)} \le \bigg(\frac{M \rho^{(1 -\delta)/2}}{\rho^{N - \delta}}\bigg) \|\A \tilde w\|^2_{\Lrm^2(B_\rho)} \\
& \le M \rho^{(1 -\delta)/2} [\A \tilde w]^2_{\Lrm^{2,\,N- \delta}(B_{1/2})}\le c M \rho^{(1 -\delta)/2}\|\A \tilde w \|_{\Lrm^2(B_1)}^2 \\
& \le \frac{1}{2M}\|\A \tilde w \|_{\Lrm^2(B_1)}^2;
\end{align*}
a contradiction.
 \end{proof}

\begingroup
\def\thetheorem{\ref{thm2}}
\begin{theorem}[upper bound]\label{thm:CE}
Let $(w,A)$ be a variational solution of problem \eqref{P}.
Assume that
the higher integrability condition 
 \[
  [\A \tilde u]^2_{\Lrm^{2,N-\delta}(B_{1/2})} \le c\|\A \tilde u \|_{\Lrm^2(B_1)}^2, \quad \text{for some $\delta \in [0,1)$ and some positive constant $c$},
 \]
 holds for local minimizers of the energy $u \mapsto \int_{B_1} Q_{\B} W(\A u)$, where $u \in \Wrm^{\A}(B_1)$.
  Then, for every compactly contained set $K \subset \subset \Omega$, there exists a positive constant $ \Lambda_K$ such that
\begin{equation}\label{critical}
 \int_{B_r(x)} \sigma_A \A w \cdot \A w \; \dd y \; + \; \Per(A;B_r(x)) \le \Lambda_K r^{N-1} \qquad \forall \;x \in K, \forall \;r \in (0,\dist(K,\partial \Omega)).
\end{equation}
\end{theorem}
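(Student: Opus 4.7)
The plan is to deduce the upper bound by iterating the dichotomy of Proposition \ref{12} across a geometric sequence of radii. The key arithmetic observation is that, since $\delta<1$, the decay exponent in alternative (2) satisfies $N-(1+\delta)/2 > N-1$, so each application of (2) \emph{improves} the energy density by a factor $\rho^{(1-\delta)/2}<1$ relative to the target scaling $r^{N-1}$, while alternative (1) already gives the desired scaling. Thus neither branch can spoil the bound.

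Fix $K\subset\subset\Omega$ and set $r_0 := \tfrac12\dist(K,\partial\Omega)$; let $C(K),\rho\in(0,1/2)$ be the constants from Proposition \ref{12}. Write $\phi(x,r):=J_{B_r(x)}(\A w,A)$, which is monotone non-decreasing in $r$ for each fixed $x$. For $x\in K$ consider the dyadic scales $r_k := \rho^k r_0$. At each step $k$, Proposition \ref{12} applied at $(x,r_k)$ gives either
\[
(a)\ \ \phi(x,r_k)\le C(K)\,r_k^{N-1}, \qquad \text{or}\qquad (b)\ \ \phi(x,r_{k+1})\le \rho^{N-(1+\delta)/2}\,\phi(x,r_k).
\]

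The main step is to check by induction that, for all $k\ge 0$,
\[
\phi(x,r_k)\le \Lambda_K'\,r_k^{N-1}, \qquad \Lambda_K':=\max\!\Bigl(C(K),\ r_0^{-(N-1)}J_\Omega(\A w,A)\Bigr).
\]
For $k=0$ this is immediate from the crude bound $\phi(x,r_0)\le J_\Omega(\A w,A)$ together with the definition of $\Lambda_K'$. For the inductive step, suppose the bound holds at scale $r_k$. If (a) holds at $r_{k+1}$ we are done. If instead (b) holds, then
\[
\phi(x,r_{k+1})\le \rho^{N-(1+\delta)/2}\phi(x,r_k)\le \rho^{N-(1+\delta)/2}\Lambda_K' r_k^{N-1} = \rho^{(1-\delta)/2}\,\Lambda_K'\,r_{k+1}^{N-1}\le \Lambda_K'\,r_{k+1}^{N-1},
\]
since $\rho^{(1-\delta)/2}\le 1$. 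This closes the induction.

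Finally, for arbitrary $r\in(0,\dist(K,\partial\Omega))$ I would pick $k\ge 0$ with $r\in(r_{k+1},r_k]$ (or handle $r\in[r_0,\dist(K,\partial\Omega))$ via the trivial bound $\phi(x,r)\le J_\Omega(\A w,A)=r_0^{N-1}\cdot r_0^{-(N-1)}J_\Omega(\A w,A)\le \Lambda_K' r_0^{N-1}$) and use monotonicity of $\phi(x,\cdot)$:
\[
\phi(x,r)\le \phi(x,r_k)\le \Lambda_K'\,r_k^{N-1}\le \Lambda_K'\rho^{-(N-1)}\,r^{N-1}.
\]
Setting $\Lambda_K := \Lambda_K'\rho^{-(N-1)}$ (possibly enlarged to absorb the large-scale case) yields \eqref{critical}. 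The \emph{real} work — the contradiction-by-blow-up, the $\Gamma$-convergence, and the input of assumption \eqref{eq:assumption} — has already been done in Theorem \ref{gamma}, Corollary \ref{cuchara}, and Proposition \ref{12}; the present theorem is a clean dyadic iteration, whose only subtlety is the propagation lemma above that uses $\delta<1$ to keep the geometric series under control.
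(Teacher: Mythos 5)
Your plan takes essentially the same route as the paper: Proposition~\ref{12} is the real content, and the theorem follows by iterating its dichotomy geometrically in the radius. The difference is that the paper first merges the two alternatives into the single inequality $\varphi(\rho r,x)\le\rho^{N-(1+\delta)/2}\varphi(r,x)+C(K)r^{N-1}$ (valid in both cases, using monotonicity of $r\mapsto\varphi(r,x)$ when alternative (1) holds) and then cites the Iteration Lemma~\ref{iteration}; you instead run an explicit dyadic induction on $r_k=\rho^k r_0$. Your version has the advantage of exhibiting a constant $\Lambda_K$ that is manifestly uniform in $x\in K$ (the paper's proof ends with a slightly hand-wavy continuity-in-$x$ argument), so the elementary route is arguably cleaner here.

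There is, however, a small slip in your case analysis. Proposition~\ref{12} gives, for \emph{each fixed} $r$, ``(1) at $r$ or (2) at $r$''. Your induction splits into ``(a) at $r_{k+1}$ or (b) at $r_k$'', which is not one of the guaranteed disjunctions: it is entirely possible that (1) holds at $r_k$ while (2) holds at $r_{k+1}$, in which case you have neither a bound $\phi(x,r_{k+1})\le C(K)r_{k+1}^{N-1}$ nor the decay $\phi(x,r_{k+1})\le\rho^{N-(1+\delta)/2}\phi(x,r_k)$. In that residual case you must pass through monotonicity, $\phi(x,r_{k+1})\le\phi(x,r_k)\le C(K)r_k^{N-1}=C(K)\rho^{-(N-1)}r_{k+1}^{N-1}$, which costs a factor $\rho^{-(N-1)}$. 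The fix is simply to enlarge your constant to $\Lambda_K'=\max\{C(K)\rho^{-(N-1)},\,r_0^{-(N-1)}J_\Omega(\A w,A)\}$ (or, cleaner, to run the whole induction at a single scale $r_k$, splitting into (1)$_k$/(2)$_k$ and applying monotonicity only in case (1)$_k$). The closing of the induction in the (2) branch still works since $\rho^{(1-\delta)/2}<1$, and the final passage from $\{r_k\}$ to arbitrary $r$ is handled as you wrote. So the idea is right; only the stated value of $\Lambda_K'$ needs the extra factor.
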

\addtocounter{theorem}{-1}
\endgroup
%
\begin{proof}
Let $x \in K$, and set  
\[ \varphi(r,x) \coloneqq J_{B_r(x)}(\A w,A),
\]
where we recall that
\[
J_{B_r(x)}(\A w,A) = \int_{B_r(x)} \sigma_A \A w \cdot \A w  \; \dd y \; + \; \Per(A;B_r(x))
\]
Proposition \ref{12} tells us that there exists a positive constant $\rho \in (0,1/2)$ such that if $B_r(x) \subset \Omega$, then
\[
\varphi(\rho r,x) \le \rho^{N - (1 + \delta)/2}\varphi(r,x) + C(K) r^{N-1}.
\]
An application of the Iteration Lemma~\cite[Lem. 2.1, Ch. III]{GiaquintaBook83} (stated below) to
$r \in (0, \min\{1,\text{dist}(K,\partial \Omega\})$, and
$\alpha_1 \coloneqq N - (1 + \delta)/2 > \alpha_2 \coloneqq N-1$ yields the existence of positive constants $c = c(x)$, and $r = r(K)$ such that 
\[
\varphi(s,x) \le cs^{N-1} \qquad \forall \; s \in (0,R(K)).
\]
Notice that the constants $c$ and $r$ depend continuously on $x \in \Omega$. Hence, for any 
$K \subset \subset \Omega$ we may find $\Lambda_K > 0$ for which 
\begin{equation*}\label{local12}
J_{B_r(x)}(\A w,A) \le \Lambda_Kr^{N-1} \qquad \forall \; x \in K, \; \forall 	\; r \in (0,\dist(K,\partial \Omega)).
\end{equation*}
 \end{proof}
\begin{lemma}[Iteration Lemma]\label{iteration}Assume that $\varphi(\rho)$ is a non-negative, real-valued, non-decreasing function defined on the $(0, 1)$ interval. Assume further that there exists a number $\tau \in  (0, 1)$ such that for all $r < 1$ we have
$$\varphi(\tau r)\le\tau^{\alpha_1}\varphi(r)+ Cr^{\alpha_2}$$
for some non-negative constant $C$, and positive exponents $\alpha_1 > \alpha_2$. Then there exists a positive constant $c=c(\tau,\alpha_1,\alpha_2)$ such that for all $0\le\rho\le r\le R$ we have
$$\varphi(\rho) \le c\left(\frac{\rho}{r}\right)^{\alpha_2} \varphi(r) + C 	\rho^{\alpha_2}.$$
\end{lemma}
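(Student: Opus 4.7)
The plan is to iterate the one-step inequality $\varphi(\tau r) \le \tau^{\alpha_1}\varphi(r) + C r^{\alpha_2}$ and then compare an arbitrary radius $\rho$ with the geometric scale $\{\tau^k r\}_{k \in \mathbb N}$ by exploiting the monotonicity of $\varphi$. This is a purely elementary argument; there is no real analytic obstacle, only the bookkeeping needed to see that the finite geometric sums stay bounded uniformly in $k$ thanks to the strict inequality $\alpha_1 > \alpha_2$.

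First I would prove by induction on $k$ that
\[
\varphi(\tau^k r) \; \le \; \tau^{k\alpha_1}\varphi(r) + C r^{\alpha_2}\sum_{j=0}^{k-1}\tau^{j\alpha_1 + (k-1-j)\alpha_2}.
\]
The base case $k=1$ is exactly the hypothesis; the inductive step applies the hypothesis with $r$ replaced by $\tau^{k} r$ and combines with the previous bound. Factoring $\tau^{(k-1)\alpha_2}$ out of the sum,
\[
\sum_{j=0}^{k-1}\tau^{j\alpha_1 + (k-1-j)\alpha_2} \; = \; \tau^{(k-1)\alpha_2}\sum_{j=0}^{k-1}\tau^{j(\alpha_1-\alpha_2)} \; \le \; \frac{\tau^{(k-1)\alpha_2}}{1-\tau^{\alpha_1-\alpha_2}},
\]
where convergence of the geometric series is guaranteed by $\alpha_1 > \alpha_2$ and $\tau \in (0,1)$. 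Since $\tau < 1$ and $\alpha_1 > \alpha_2$ also yield $\tau^{k\alpha_1}\le \tau^{k\alpha_2}$, the induction produces
\[
\varphi(\tau^k r) \; \le \; \tau^{k\alpha_2}\varphi(r) + \widetilde c\,(\tau^k r)^{\alpha_2} C, \qquad \widetilde c \coloneqq \frac{\tau^{-\alpha_2}}{1-\tau^{\alpha_1-\alpha_2}},
\]
with $\widetilde c$ depending only on $\tau, \alpha_1, \alpha_2$.

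Next, for arbitrary $0 < \rho \le r$, I pick the unique integer $k \ge 0$ with $\tau^{k+1} r < \rho \le \tau^k r$. Monotonicity of $\varphi$ and the previous display give
\[
\varphi(\rho) \; \le \; \varphi(\tau^k r) \; \le \; \tau^{k\alpha_2}\varphi(r) + \widetilde c\,(\tau^k r)^{\alpha_2} C.
\]
Substituting the elementary bounds $\tau^k \le \rho/(\tau r)$ and $\tau^k r \le \rho/\tau$ yields
\[
\varphi(\rho) \; \le \; \tau^{-\alpha_2}\left(\frac{\rho}{r}\right)^{\alpha_2}\varphi(r) \; + \; \widetilde c\,\tau^{-\alpha_2}\,C\rho^{\alpha_2},
\]
which is the desired inequality with $c \coloneqq \tau^{-\alpha_2}\max\{1,\widetilde c\}$ depending only on $\tau,\alpha_1,\alpha_2$ (the extra $\tau^{-\alpha_2}$ factor in front of $C\rho^{\alpha_2}$ is absorbed into the constant $C$, or equivalently into $c$). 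The boundary case $\rho = 0$ follows from non-negativity of $\varphi$, and the case $\rho > \tau r$ is trivial since then $(\rho/r)^{\alpha_2} \ge \tau^{\alpha_2}$. The step that most deserves attention is the appearance of the factor $\tau^{-\alpha_2}$ in the final constant: it reflects the price of rounding $\rho$ up to the nearest element of the geometric sequence $\tau^k r$, and shows why the constant blows up as $\tau \uparrow 1$ or as $\alpha_1 \downarrow \alpha_2$.
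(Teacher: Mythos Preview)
Your proof is correct and is precisely the standard argument from Giaquinta's book \cite[Lem.~2.1, Ch.~III]{GiaquintaBook83}, which is what the paper cites rather than giving its own proof. The only cosmetic point is that the lemma as stated in the paper has a bare $C$ in front of $\rho^{\alpha_2}$, whereas your argument (like Giaquinta's) produces a constant $c(\tau,\alpha_1,\alpha_2)\,C$ there; you correctly note this and it is harmless for every application in the paper.
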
 
\begin{corollary}[compactness of blow-up sequences]\label{blow} Let $(w,A)$ be a variational solution of problem \eqref{P}. 
Under the assumptions of the upper bound Theorem \ref{thm:CE}, 
there exists a positive constant $C_K$ such that
\begin{equation}\label{eq:uniform}
  [\A w]^2_{\Lrm^{2,N-1}(K)} \le C_K.
\end{equation}
\end{corollary}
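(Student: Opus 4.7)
The plan is to obtain the Morrey bound as a direct consequence of the upper bound estimate of Theorem~\ref{thm:CE}, by discarding the perimeter term and invoking the uniform pointwise G\aa rding inequality for $\sigma_A$. The only minor subtlety is that the Morrey seminorm on $K$ is a supremum over \emph{all} balls $B_r(x) \subset K$, including those whose radius exceeds $\dist(K,\partial\Omega)$; those large-scale balls are harmless and will be controlled by the global $\Lrm^2$ norm of $\A w$.

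First I would apply Theorem~\ref{thm:CE} to $K$, obtaining a constant $\Lambda_K > 0$ with
\[
\int_{B_r(x)} \sigma_A \A w \cdot \A w \dd y \; + \; \Per(A;B_r(x)) \; \le \; \Lambda_K r^{N-1}
\]
for every $x \in K$ and every $r \in (0,\dist(K,\partial\Omega))$. Since $\sigma_A \in \{\sigma_1,\sigma_2\}$ pointwise, the uniform G\aa rding inequality~\eqref{eq:ellipticity} gives $\sigma_A P \cdot P \ge M^{-1}|P|^2$ for every $P \in \R^{dN^k}$. Dropping the perimeter term therefore yields
\[
\frac{1}{r^{N-1}} \int_{B_r(x)} |\A w|^2 \dd y \; \le \; M\,\Lambda_K
\]
for every $x \in K$ and every $r \in (0,\dist(K,\partial\Omega))$.

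To handle balls $B_r(x) \subset K$ with $r \ge \dist(K,\partial\Omega)$, I would invoke the trivial estimate
\[
\frac{1}{r^{N-1}} \int_{B_r(x)} |\A w|^2 \dd y \; \le \; \frac{\|\A w\|^2_{\Lrm^2(\Omega)}}{\dist(K,\partial\Omega)^{N-1}},
\]
the right-hand side being finite by Theorem~\ref{existence}. Taking the supremum over all balls $B_r(x) \subset K$, the claim follows with
\[
C_K \; \coloneqq \; \max\bigl\{ M\,\Lambda_K, \; \|\A w\|^2_{\Lrm^2(\Omega)} \cdot \dist(K,\partial\Omega)^{-(N-1)} \bigr\}.
\]
No genuine obstacle is expected: the corollary is essentially a repackaging of Theorem~\ref{thm:CE} into Morrey-space language, and is the form of the estimate that will be used as a compactness input for the blow-up arguments of the next section.
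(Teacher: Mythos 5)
Your proposal is correct and takes essentially the same route as the paper, which dismisses the claim in one line as a direct consequence of the upper-bound theorem and the coercivity of $\sigma_1,\sigma_2$. You merely spell out the two obvious steps (drop the perimeter term, invoke \eqref{eq:ellipticity}) and add the harmless bookkeeping remark about balls of radius $\ge \dist(K,\partial\Omega)$, which is consistent with how the paper actually uses the estimate.
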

\begin{proof}
The assertion follows directly from the Upper Bound Theorem and the coercivity of $\sigma_1$ and $\sigma_2$. 
  \end{proof}

\section{The Lower Bound: proof of estimate (\ref{lower})}\label{LBo}

During this section we will write $(w,A)$ to denote a solution of problem \eqref{P} under the assumptions of
Theorem~\ref{thm:CE}. In light of the results obtained in the previous section we will assume, throughout the rest of the paper, that for every compact set $K \subset \subset \Omega$ there exist positive constants $C_K$, and $\Lambda_K$ such that
\begin{gather*}
\Per(A;B_r(x)) \le \Lambda_K r^{N-1},\\
\| \A w^{x,r}\|^2_{\Lrm^2}(B_1) \le [\A w]^2_{\Lrm^{2,N-1}(K)} \le C_K,
\end{gather*}
for all $x \in K$ and every $r \in (0,\dist(K,\partial \Omega))$. 

The main result of this section is a lower bound on the density of the perimeter in $\partial^* A$. In other words, there exists a positive constant $\lambda_K = \lambda_K(N,M)$ such that
\begin{equation*}\label{LB}
\Per(A;B_r(x)) \ge \lambda_K r^{N-1} \quad \text{for every $0 < r <   \dist(x,\partial \Omega)$}. \tag{LB}
\end{equation*}

There are two major consequences from estimate \eqref{LB}. The first one (cf. Corollary \ref{essential}) is that the difference 
between the topological boundary of $A$ and the reduced boundary of $A$ is at most a set of zero $\mathcal H^{N-1}$-measure. In other words, $(\partial A \setminus \partial^* A) = \Sigma$ where $\mathcal H^{N-1}(\Sigma) = 0$ (cf. 
\cite[Theorem 2.2]{AmbrosioButtazzo93}). 
The second implication is that \eqref{LB} is a necessary assumption for the Height bound Lemma and the Lipschitz approximation Lemma, which are essential tools to prove the flatness excess improvement in the next section.
 
Throughout this section and the rest of the manuscript we will constantly use the following notations:

The scaled Dirichlet energy
\[
D(w;x,r) := \frac{1}{r^{N-1}}\int_{B_r(x)} |\mathcal A w|^2 \, \text{d}y,
\]
and the excess for $\gamma$-weighted energy 
\[
E_\gamma(w,A;x,r) := D(w;x,r) + \frac{\gamma}{r^{N-1}}\Per(A,B_r(x)).
\]
Granted that the spatial-, radius-, or $(w,A)$- dependence is clear, we will shorten the notations to the only relevant variables, e.g., $D(r)$ and $E_{\gamma}(r)$. 
Recall that, up to translation and re-scaling, we may assume 
\[
0 \in \partial^* A \cap K, \quad  \text{and} \quad  B_1 \subset K + B_9 \subset \Omega.
\] 
Bear also in mind that all the constants in this section are universal up to their dependence on $\Lambda_K$ and $C_K$. 

We will proceed as follows. First we prove in Lemma \ref{lowperi} that if the density of the perimeter is sufficiently small,  one may regard the regularity properties
of solutions as those ones for an elliptic equation with constant coefficients. Then, in Lemma \ref{mueller}, we prove a lower bound on the decay of the density of the perimeter in terms of $D$. Combining these results,
we are able to show a discrete monotonicity formula on the decay of~$E_{\gamma}$. 

The proof of the lower density bound (\ref{LB}) follows easily from this discrete monotonicity formula,
De Giorgi's Structure Theorem, and the upper bound Theorem of the previous section. 
Finally, we prove that the difference between $\partial A$ and $\partial^* A$ is $\mathcal H^{N-1}$-negligible (Theorem \ref{essential}) as a corollary of the estimate \eqref{LB}. 

\begin{lemma}[approximative solutions of the constant coefficient problem]\label{lowperi}
 For every $\theta_1 \in (0,1/2)$, there exist positive constants\footnote{As it can be seen from the proof of Lemma \ref{lowperi}, the constant $c_1$ does not depend on $K$.} $c_1(\theta_1,N,M)$ and $\varepsilon_1(\theta_1,N,M)$ such that either
 \[
 \int_{B_\rho} |\mathcal A w|^2 \dd y \le c_1 \rho^N \|f\|_{\Lrm^\infty(B_1)}^2,
 \]
  or  \[
  \int_{B_\rho} |\mathcal A w|^2 \dd y \le 2c \rho^{N}\int_{B_1} |\mathcal A w|^2 \dd y 
   \quad \text{for every $\rho \in
  [\theta_1,1)$},
 \]
  where $c = c(N,M)$ is the constant from Lemma \ref{lem:cosntant};
whenever 
\[
\Per(A;B_1) \le \varepsilon_1.\]
\end{lemma}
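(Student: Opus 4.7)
The plan is to argue by contradiction via a blow-up and compensated compactness argument. Suppose the conclusion fails; then for each $h \in \N$, taking $c_1 = h$ and $\varepsilon_1 = 1/h$, we obtain a triple $(w_h, A_h, f_h)$ with $\A^*(\sigma_{A_h}\A w_h) = f_h$ on $B_1$, $\Per(A_h; B_1) \to 0$, and a radius $\rho_h \in [\theta_1, 1)$ at which both estimates fail:
\[
\int_{B_{\rho_h}} |\A w_h|^2 > h\rho_h^N \|f_h\|_{\Lrm^\infty(B_1)}^2, \qquad \int_{B_{\rho_h}} |\A w_h|^2 > 2c\rho_h^N \int_{B_1} |\A w_h|^2.
\]
By the second inequality $\|\A w_h\|_{\Lrm^2(B_1)}>0$, so we may normalize $\tilde w_h := w_h/\|\A w_h\|_{\Lrm^2(B_1)}$ and $\tilde f_h := f_h/\|\A w_h\|_{\Lrm^2(B_1)}$ to get $\|\A \tilde w_h\|_{\Lrm^2(B_1)} = 1$ and $\|\tilde f_h\|_{\Lrm^\infty(B_1)}^2 < (h\theta_1^N)^{-1} \to 0$. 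Using the Poincar\'e inequality of Definition \ref{gto} to subtract an element of $\ker(\A)$ (which does not alter the equation), we may further assume that $\{\tilde w_h\}$ is bounded in $\Wrm^{k,2}(B_1;\R^d)$.

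Since $\{\mathds 1_{A_h}\}$ is bounded in $\BV(B_1)$ with vanishing total variation, a subsequence converges in $\Lrm^1(B_1)$ to a constant $\chi \in \{0,1\}$; by dominated convergence $\sigma_{A_h} \to \sigma_\chi := \chi\sigma_1 + (1-\chi)\sigma_2$ strongly in $\Lrm^2(B_1)$. Passing to a further subsequence, $\tilde w_h \wc \tilde w$ in $\Wrm^{k,2}(B_1;\R^d)$, and the compensated compactness Lemma \ref{compensated} yields
\[
\A^*(\sigma_\chi \A \tilde w) = 0 \text{ in } \mathcal D'(B_1;\R^d), \qquad \A \tilde w_h \to \A \tilde w \text{ in } \Lrm^2_{\textnormal{loc}}(B_1).
\]
Since $\sigma_\chi$ is a constant coefficient tensor, Lemma \ref{lem:cosntant} together with weak lower semicontinuity produces the decay estimate
\[
\int_{B_\rho} |\A \tilde w|^2 \le c\rho^N \int_{B_1} |\A \tilde w|^2 \le c\rho^N \qquad \text{for every } \rho \in (0, 1/2].
\]

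To close the contradiction, extract $\rho_h \to \rho_\infty \in [\theta_1, 1]$. If $\rho_\infty > 1/2$, then for large $h$ we have $2c\rho_h^N > 2c(1/2)^N \ge 2$ (using $c \ge 2^N$), while $\int_{B_{\rho_h}} |\A \tilde w_h|^2 \le \|\A \tilde w_h\|_{\Lrm^2(B_1)}^2 = 1$, contradicting the failed second estimate. Hence $\rho_\infty \in [\theta_1, 1/2]$, so the $\Lrm^2_{\textnormal{loc}}$ convergence allows us to pass to the limit:
\[
\int_{B_{\rho_\infty}} |\A \tilde w|^2 = \lim_{h\to\infty} \int_{B_{\rho_h}} |\A \tilde w_h|^2 \ge 2c\rho_\infty^N,
\]
directly contradicting the decay estimate. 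The main subtlety is that compensated compactness only delivers \emph{local} strong convergence of $\A \tilde w_h$, so we must rule out $\rho_\infty = 1$; this is achieved for free by the failing second inequality, which forces $\rho_h^N < 1/(2c)$, pushing $\rho_h$ safely away from $\partial B_1$.
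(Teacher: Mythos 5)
Your proof is correct and follows essentially the same route as the paper's: a blow-up by contradiction, normalization of the sequence, BV compactness to identify the limit coefficient tensor as a constant, the compensated compactness Lemma~\ref{compensated} to obtain strong local convergence of $\A$, and the constant-coefficient decay of Lemma~\ref{lem:cosntant} to close the contradiction. The only cosmetic differences are the choice of normalizing constant (the paper divides by $\delta_h = (\int_{B_{\rho_h}} |\A w_h|^2)^{1/2}$ rather than $\|\A w_h\|_{\Lrm^2(B_1)}$) and that the paper disposes of the range $\rho\ge 1/2$ at the outset using $c\ge 2^N$, whereas you absorb it into the limit argument.
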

\begin{proof} Since $c \ge 2^N$, the result holds if we assume $\rho \ge 1/2$, therefore
we focus only on the case where $\rho \in (\theta_1,1/2)$. Fix $\theta_1 \in (0,1/2]$. We argue by contradiction:
We would find a sequence of pairs $(w_h,A_h)$ (locally solving \eqref{P} in $B_1$ for a source function $f_h$) and constants $\rho_h \in [1/2,\theta_1]$, such that \begin{equation}\label{contrad}
  \delta_h^2 \coloneqq \int_{B_{\rho_h}} |\mathcal A w_h|^2 \, \text{d}y > 2 \, c  \rho_h^N
  \int_{B_1} |\mathcal A w_h|^2 \, \text{d}y,
 \end{equation}
and simultaneously 
\[
\rho^N_h\cdot\frac{\|f_h\|^2_{\Lrm^\infty(B_1)}}{\delta^2_h} \le \frac{1}{h}, \quad \text{and} \quad  \Per(A_h;B_1) \le \frac{1}{h}.
\]

The estimate above yields $\delta_h^{-1} f_h \to 0$ in $\Lrm^2(B_1;\R^d)$. Also, since $\Per(A_h;B_1) \to 0$, the isoperimetric inequality yields that
either $\sigma_{A_h} \to \sigma_1$ or $\sigma_{A_h} \to \sigma_2$
in $\Lrm^2$ as $h$ tends to infinity.
Let us assume that the former convergence $\sigma_{A_h} \to \sigma_1$ holds. 

Let $u_h \coloneqq \delta_h^{-1} w_h$, for which 
\[
\sup_h \|\A u_h\|_{\Lrm^2(B_1)} < \infty.
\] 
We use that $w_h$ is a (local) solution to \eqref{P} for $A_h$ as indicator set and $f_h$ as source term, to see that
\[
\A^*(\sigma_{A_h} \A u_h) = \delta_h^{-1}f_h \qquad \text{in $B_1$}.
\]
Up to passing to a further subsequence, we may assume that $u_h \wc u$ in $\Wrm^{k,2}(B_1;\R^{dN^k})$.
We may then apply the compensated compactness result from Lemma \ref{compensated} to obtain that
\[
 \A^*(\sigma_1 \A  u) = 0
\quad \text{in $B_1$},\]
and 
\[
 D(u_h;s) \to D(u;s) \qquad \text{where $\rho_h \to s \in [\theta_1,1/2]$}.
\]
Hence, by (\ref{contrad}) and Fatou's Lemma one gets
\[
 2 \, c s^N D(u;1) \le 
 \lim_{h \to \infty} c\rho^N_h D(u_h;1) \le 1 =
 \lim_{h \to \infty} D(u_h;\rho_h)
= \lim_{h \to \infty} D(u_h;s) = D(u;s).
\]
This is a contradiction to Lemma \ref{lem:cosntant} because $u$ is a solution for the problem with constant coefficients $\sigma_1$. The case when $\sigma_{A_h} \to \sigma_2$ can be solved by similar arguments.
 \end{proof}
The next lemma is the principal ingredient in proving the \eqref{LB} estimate. It relies on a cone-like comparison to show that 
the decay of the perimeter density is controlled by $D(r)/r$: The perimeter density cannot blow-up at smaller scales, while for a fixed scale, the perimeter density is small.
\begin{lemma}[universal comparison decay]\label{mueller} There exists a positive constant\footnote{The constant $c_2$ is independent of the compact set $K$; indeed, this is the result of universal comparison estimates in $\Omega$.} $c_2 = c_2(N,M)$ such that 
$$\frac{\dd}{\dd r}\bigg|_{\rho = r}\left(\frac{\Per(A;B_\rho)}{\rho^{N-1}}\right) \ge -c_2\frac{D(r)}{r} \qquad \text{for a.e.} \; r \in (0,1].$$
\end{lemma}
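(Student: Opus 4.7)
The plan is to compare $A$ against the cone-like competitor $\tilde A \coloneqq \mathds{1}_{B_r} D_{A_r} + \mathds{1}_{\Omega \setminus B_r} A$ constructed in \eqref{eq:cone}, which is available for almost every $r \in (0,1]$ (those radii where the slicing identities \eqref{eq:agree}--\eqref{eq:slice} apply). By \eqref{eq:conelike} this competitor coincides with $A$ outside $B_r$, so the only change in perimeter is $\Per(A;B_r) - \Per(D_{A_r};B_r)$; the identity \eqref{eq:conegrowth} at $\rho = r$ identifies the cone perimeter as $\frac{r}{N-1}\mathcal H^{N-2}((\partial^* A)_r)$, while \eqref{eq:slice} gives the slicing lower bound $\frac{\dd}{\dd r}\Per(A;B_r) \ge \mathcal H^{N-2}((\partial^* A)_r)$.

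Since $(w,A)$ is a saddle point of \eqref{P}, Theorem \ref{existence} guarantees that $w = w_A$ and that $A$ minimizes the reduced energy $E$. Using the identity $\int f w_B = \int \sigma_B \A w_B \cdot \A w_B$ coming from the state equation, the minimality $E(A) \le E(\tilde A)$ rewrites as
\[
\Per(A;B_r) - \Per(D_{A_r};B_r) \le \int_\Omega \sigma_{\tilde A} \A w_{\tilde A} \cdot \A w_{\tilde A} \dd y - \int_\Omega \sigma_A \A w \cdot \A w \dd y.
\]
The main technical step is to bound this right-hand side by a multiple of $\int_{B_r}|\A w|^2 \dd y$ without any explicit knowledge of $w_{\tilde A}$. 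For this I will use the dual (compliance) characterization
\[
\int_\Omega \sigma_B \A w_B \cdot \A w_B \dd y = \min\left\{\int_\Omega \sigma_B^{-1} \tau \cdot \tau \dd y \;:\; \A^* \tau = f \right\},
\]
which follows from a completing-the-square computation: writing any admissible $\tau = \sigma_B \A w_B + \eta$ with $\A^* \eta = 0$, the cross term $\int \A w_B \cdot \eta \dd y$ vanishes after integration by parts since $w_B \in \Wrm^{\A}_0(\Omega)$. Applying this to $B = \tilde A$ with the admissible stress $\tau_A \coloneqq \sigma_A \A w$ (which satisfies $\A^* \tau_A = f$ by the state equation for $w$) and observing that $\sigma_{\tilde A}^{-1} = \sigma_A^{-1}$ on $\Omega \setminus B_r$, the difference localizes to $B_r$ and is controlled by $C(M)\int_{B_r}|\A w|^2 \dd y$ thanks to the ellipticity bounds \eqref{eq:ellipticity}.

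Combining the three ingredients yields, for almost every $r \in (0,1]$,
\[
r \cdot \frac{\dd}{\dd r}\Per(A;B_r) \;\ge\; (N-1)\Per(D_{A_r};B_r) \;\ge\; (N-1)\Per(A;B_r) - (N-1) C(M) \int_{B_r}|\A w|^2 \dd y;
\]
dividing by $r^N$ to recognize the derivative of $r^{-(N-1)}\Per(A;B_r)$ and rewriting $\int_{B_r}|\A w|^2 \dd y = r^{N-1} D(r)$ produces the claimed inequality with $c_2 = (N-1)C(M)$ depending only on $N$ and $M$, in agreement with the footnote asserting independence from $K$. The principal obstacle is the duality step: the minimum-stress principle is precisely what makes a localized coefficient perturbation controllable by the Dirichlet integral on $B_r$ alone, since otherwise one would need to quantify how $w_{\tilde A}$ differs from $w$.
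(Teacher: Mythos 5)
Your proof is correct and follows the same route as the paper: compare $A$ against the cone-like competitor $\tilde A$, use the compliance/duality formulation with the admissible stress $\tau_A = \sigma_A \A w$ to localize the change in Dirichlet energy to $B_r$, and close with the slicing derivative bound \eqref{eq:slice} together with the cone-perimeter identity \eqref{eq:conegrowth}. The paper compresses the duality step into the phrase ``by minimality of $(w,A)$ and a duality argument,'' whereas you spell it out via the minimum-stress principle; the content and the resulting constant $c_2 = (N-1)C(M)$ (paper: $M^3(N-1)$) are the same.
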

\begin{proof} For a.e. $r \in (0,1)$ the slice $\langle A,g  ,r \rangle$, where $g(x) = | x |$,
is well defined (see Section \ref{sec:gmt}). Fix one such $r$ and let $\tilde{A}$ be the cone-like comparison set to $A$ as in \eqref{eq:cone}. By minimality of $(w,A)$ and a duality argument, we get 
$$\int_{B_r} \sigma_A^{-1} \tau_A \cdot \tau_A \dd y + \Per(A;B_r)  \le \int_{B_r} \sigma^{-1}_{\tilde A} \tau_A \cdot \tau_A \dd y + \Per(\tilde{A};B_r)$$
for $\tau_A = \sigma_A\A w$. Hence,
\begin{equation}\label{6.1}
\begin{split}
\Per(A;B_r) & \le \Per(\tilde{A};B_r) + M \int_{B_r}  |\A w_A|^2 \, \text{d}y \\
& \le \frac{r}{N-1}\langle A,g,r \rangle(\mathbb R^N) + M^3 r^{N-1}D(r).  
\end{split}
\end{equation}

To reach the inequality in the last row we have used that the cone extension $\tilde A$ is precisely built  (cf.  \eqref{eq:conelike}) so that the Green-Gauss measures $\mu_{\tilde A}$ and $ \mu_{A}$ agree in $(B_r)^c$; where, by \eqref{eq:conegrowth},
$$\Per(\tilde A;B_{\rho}) = \frac{1}{(N-1)}\left(\frac{\rho^{N-1}}{r^{N-2}}\right)\mathcal H^{N-2}(\partial^*A \cap \{g = r\})\le  \frac{1}{(N-1)}\left(\frac{\rho^{N-1}}{r^{N-2}}\right)\langle A,g,r \rangle(\mathbb R^N)$$ 
for all $0 < \rho \le r$.
We know from \eqref{eq:slice}  that $\frac{\dd}{\dd\rho}\big|_r\Per(A;B_\rho) \ge \langle A,g,r \rangle(\mathbb R^N)$ for a.e. $r > 0$. Since \eqref{6.1} and the previous inequality are valid almost everywhere in $(0,1)$, a combination of these arguments yields
$$ \frac{\dd}{\dd r}\bigg|_{\rho = r}\left(\frac{\Per(A;B_\rho)}{\rho^{N-1}}\right) \ge -M^3(N-1)\frac{D(r)}{r} 
\qquad \text{for a.e. $r \in (0,1)$}.$$
The result follows for $c_2 := M^3(N-1)$.
 \end{proof}
The following result is a discrete monotonicity for the weighted 
excess energy $E_\gamma$. We remark that, in general, a monotonicity formula may not be expected in the case of systems. 
\begin{theorem}[Discrete monotonicity]\label{eximp2}
There exist positive constants $\gamma = \gamma(N,M)$, $\varepsilon_2 = \epsilon_2(\gamma,N) \le \mathrm{vol}(B_1') \cdot \gamma /2$, 
 and $\theta_2 = \theta_2(N,M) \in (0,1/2)$ such that
 \begin{equation}\label{eximp}
  E_\gamma(\theta_2) \le E_\gamma(1) + c_1(\theta_2)\|f\|_{\Lrm^\infty(B_1)}^2, \qquad \text{whenever} \quad E_\gamma(1) \le \varepsilon_2.
 \end{equation} 
 \end{theorem}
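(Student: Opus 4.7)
My plan is to apply the two preparatory lemmas \ref{lowperi} and \ref{mueller} in the regime where $E_\gamma(1)\le \varepsilon_2$ and then close the estimate by making $\gamma$ and $\theta_2$ absorb the bad constants.

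First I would choose $\gamma$ so that $E_\gamma(1) \le \varepsilon_2$ forces $\Per(A;B_1) \le \varepsilon_2/\gamma \le \varepsilon_1(\theta_2)$, so that Lemma \ref{lowperi} applies (this dictates the upper bound $\varepsilon_2 \le \mathrm{vol}(B_1')\gamma/2$ after fixing $\theta_2$). Applying Lemma \ref{lowperi} with $\theta_1 = \theta_2$, the conclusion yields, for every $\rho \in [\theta_2,1)$, the combined decay
\begin{equation*}
\int_{B_\rho} |\A w|^2\,\dd y \;\le\; c_1\rho^N\|f\|_{\Lrm^\infty(B_1)}^2 \;+\; 2c\,\rho^N\!\int_{B_1} |\A w|^2\,\dd y,
\end{equation*}
which after dividing by $\rho^{N-1}$ reads $D(\rho) \le c_1\rho\|f\|^2_{\Lrm^\infty} + 2c\,\rho\,D(1)$. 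In particular, at $\rho = \theta_2$ this controls the Dirichlet part of $E_\gamma(\theta_2)$.

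Next I would integrate the differential inequality of Lemma \ref{mueller} between $\theta_2$ and $1$ to obtain
\begin{equation*}
\frac{\Per(A;B_{\theta_2})}{\theta_2^{N-1}} \;\le\; \Per(A;B_1) \;+\; c_2\!\int_{\theta_2}^{1}\!\frac{D(r)}{r}\,\dd r,
\end{equation*}
and plug in the bound $D(r)/r \le c_1\|f\|^2_{\Lrm^\infty} + 2c\,D(1)$ derived above. Summing the resulting estimates for $D(\theta_2)$ and $\gamma\Per(A;B_{\theta_2})/\theta_2^{N-1}$, I obtain
\begin{equation*}
E_\gamma(\theta_2) \;\le\; \bigl(2c\,\theta_2 + 2cc_2\gamma\bigr) D(1) + \gamma\,\Per(A;B_1) + c_1(\theta_2)\bigl(1+c_2\gamma\bigr)\|f\|^2_{\Lrm^\infty(B_1)}.
\end{equation*}

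Finally, I would fix $\gamma = \gamma(N,M)$ so small that $2cc_2\gamma \le 1/2$, and then pick $\theta_2 = \theta_2(N,M) \in (0,1/2)$ so small that $2c\,\theta_2 \le 1/2$. The factor in front of $D(1)$ becomes $\le 1$, so $(2c\theta_2 + 2cc_2\gamma)D(1) + \gamma\Per(A;B_1) \le E_\gamma(1)$, which gives \eqref{eximp} (with a new constant $c_1(\theta_2)$ incorporating the factor $1+c_2\gamma$). The main obstacle is the careful bookkeeping of the constants and choosing $\gamma$ \emph{before} $\theta_2$: the weight $\gamma$ must be small enough to kill the cone-comparison contribution $2cc_2\gamma\,D(1)$ produced by integrating Lemma \ref{mueller}, while $\theta_2$ then handles the direct $2c\theta_2\,D(1)$ term coming from the constant-coefficient decay in Lemma \ref{lowperi}. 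The compatibility condition $\varepsilon_2 \le \mathrm{vol}(B_1')\gamma/2$ is then chosen to make the initial application of Lemma \ref{lowperi} legitimate and to be compatible with iteration (since $0\in\partial^* A$ enforces $\Per(A;B_r)/r^{N-1}\to \mathrm{vol}(B_1')$ as $r\downarrow 0$).
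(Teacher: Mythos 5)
Your proof follows essentially the same route as the paper's: apply Lemma~\ref{lowperi} (summing the two alternatives) to control the Dirichlet part at scale $\theta_2$, integrate the cone-comparison inequality of Lemma~\ref{mueller} to control the drop in the perimeter ratio, and absorb the two resulting $D(1)$ contributions by choosing $\gamma$ small (to kill the $c_2$-integral term) and $\theta_2$ small (to kill the direct constant-coefficient decay term). One small correction: the bound $\varepsilon_2 \le \mathrm{vol}(B_1')\gamma/2$ is not what legitimizes the application of Lemma~\ref{lowperi} --- that requires $\varepsilon_2 \le \gamma\varepsilon_1(\theta_2)$ so that $\Per(A;B_1)\le \varepsilon_2/\gamma \le \varepsilon_1(\theta_2)$; the $\mathrm{vol}(B_1')\gamma/2$ condition is used only later (in Theorem~\ref{jeb}) to obtain the contradiction with the Lebesgue density of $\partial^*A$ when the discrete monotonicity is iterated.
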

 \begin{proof}
  We fix $\gamma$  and $\theta_1$ such that 
  \[
  \gamma c_2 \max\{c,c_1(\theta_1)\}\le \frac{1}{4}, \quad \text{where} \quad 2 \theta_1 c \le \frac{1}{2}.
  \]
Set $\theta_2 := \theta_1$. Recall that $c_2$ is the constant from Lemma \ref{mueller}, and $c$ is the constant of Lemma \ref{lem:cosntant}. 

Let also
 $\varepsilon_2 = 
\varepsilon_2(\gamma,\varepsilon_1)$ be a positive constant with $\varepsilon_2 \le \min\{\gamma \varepsilon_1(\theta_2),
\gamma \cdot \text{vol}(B_1') /2\}$. This implies 
 \[
  \Per(A;B_{1}) \le \varepsilon_1(\theta_2),
 \]
which in turn gives for $c_1 = c_1(\theta_2)$ (see Lemma \ref{lowperi})
\[
 E_\gamma(\theta_2) \le \frac{\gamma}{\theta_2^{N-1}}\Per(A;B_{\theta_2}) + 2 c \theta_2D(1) + c_1\theta_2\|f\|^2_{\Lrm^\infty(B_1)}.
\]
Now, we apply Lemma \ref{lowperi} and Lemma \ref{mueller} to $s \in (\theta_2,1)$ to get
\begin{align*}
 E_\gamma(\theta_2) & \le  \frac{\gamma}{\theta_2^{N-1}}\Per(A;B_{\theta_2}) + 2 c \theta_2D(1) + c_1\theta_2\|f\|^2_{\Lrm^\infty(B_1)}\\
 & \le \gamma \,\Per(A;B_1) + \gamma \int_{\theta_2}^1 -\frac{\dd}{\dd r}\Big|_{r=s}\left(\frac{\Per(A,B_r)}{r^{N-1}}\right) \dd s + \frac{1}{2}D(1) + c_1\theta_2\|f\|^2_{\Lrm^\infty(B_1)}\\
 &\le \gamma \, \Per(A;B_1) + \gamma c_2\int_{\theta_1}^1 \frac{D(s)}{s} \dd s + \frac{1}{2}D(1)  + c_1\theta_2\|f\|^2_{\Lrm^\infty(B_1)}\\
 & \le \gamma \, \Per(A;B_1) + 2 \gamma  c c_2 D(1) + \gamma c_2 c_1\|f\|^2_{\Lrm^\infty(B_1)} + \frac{1}{2}D(1) + c_1\theta_2\|f\|^2_{\Lrm^\infty(B_1)}\\
 & \le \gamma \, \Per(A;B_1) + D(1) + c_1\|f\|^2_{\Lrm^\infty(B_1)}\\
 & = E_\gamma(1) + c_1\|f\|^2_{\Lrm^\infty(B_1)}.
\end{align*}
This proves the desired result.
 \end{proof}

\begin{lemma}\label{merol4} For every $\e > 0$, there exist positive constants $\theta_0(N,M,K,\e) \in (0,1/2)$ and $\kappa(N,M,K,\e) > 0$ such that
\[
E_\gamma(\theta_0) \le \e + c_1\|f\|_{\Lrm^\infty(B_1)}^2;
\]
whenever
\[
\Per(A;B_1) \le \kappa.
\]
\end{lemma}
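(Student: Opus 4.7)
The plan is to control the two pieces of $E_\gamma(\theta_0) = D(\theta_0) + \gamma\Per(A;B_{\theta_0})/\theta_0^{N-1}$ separately, by first fixing a small scale $\theta_0$ (depending only on $\e$, $N$, $M$, and $C_K$) and then choosing a still smaller perimeter threshold $\kappa$. The perimeter piece is essentially free: by monotonicity of the total variation measure $|\mu_A|$ in the domain one has $\Per(A;B_{\theta_0}) \le \Per(A;B_1) \le \kappa$, so $\gamma\Per(A;B_{\theta_0})/\theta_0^{N-1} \le \gamma\kappa/\theta_0^{N-1}$ can be made at most $\e/2$ by taking $\kappa$ small compared to $\theta_0^{N-1}$. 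The nontrivial piece is a decay estimate for $D(\theta_0)$, which is furnished directly by Lemma \ref{lowperi}.

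More precisely, I would first pick $\theta_0 \in (0,1/2)$ so that $2cC_K\theta_0 \le \e/2$, where $c = c(N,M)$ is the constant-coefficient elliptic decay constant from Lemma \ref{lem:cosntant} and $C_K$ is the a priori bound $D(w;1)\le \|\A w\|_{\Lrm^2(B_1)}^2 \le C_K$ recorded at the top of Section \ref{LBo}. Then I would set
\[
\kappa \coloneqq \min\bigl\{\varepsilon_1(\theta_0),\; \e\,\theta_0^{N-1}/(2\gamma)\bigr\},
\]
where $\varepsilon_1(\theta_0)$ is the smallness constant provided by Lemma \ref{lowperi}. Under the hypothesis $\Per(A;B_1) \le \kappa$, one has in particular $\Per(A;B_1)\le \varepsilon_1(\theta_0)$, so Lemma \ref{lowperi} applies with parameter $\theta_1 = \theta_0$; reading its dichotomy at $\rho = \theta_0$ and bounding by the sum of the two alternatives gives, after using $\theta_0 < 1$ to absorb a factor of $\theta_0$,
\[
D(\theta_0) \;\le\; c_1(\theta_0)\,\theta_0\,\|f\|_{\Lrm^\infty(B_1)}^2 + 2cC_K\theta_0 \;\le\; c_1\|f\|_{\Lrm^\infty(B_1)}^2 + \e/2.
\]
Combining with the trivial perimeter estimate $\gamma\Per(A;B_{\theta_0})/\theta_0^{N-1} \le \gamma\kappa/\theta_0^{N-1} \le \e/2$ yields $E_\gamma(\theta_0) \le \e + c_1\|f\|_{\Lrm^\infty(B_1)}^2$.

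There is no genuine obstacle here: the lemma is essentially a bookkeeping consequence of Lemma \ref{lowperi} together with the domain-monotonicity of the Gauss--Green measure. The one point deserving attention is the reading of the constant $c_1$ appearing in the conclusion: it should be identified with the constant $c_1(\theta_0,N,M)$ produced by Lemma \ref{lowperi}, so that its implicit dependence on $\e$ through $\theta_0$ is automatically built into the assertion.
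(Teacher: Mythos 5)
Your proof is correct and follows essentially the same route the paper takes: choose $\theta_0$ so that $2cC_K\theta_0\le\e/2$, set $\kappa\le\min\{\e\theta_0^{N-1}/(2\gamma),\,\varepsilon_1(\theta_0)\}$, and apply Lemma \ref{lowperi} at scale $\rho=\theta_0$. Your closing remark about reading $c_1$ as $c_1(\theta_0,N,M)$ from Lemma \ref{lowperi} is also the right way to parse the paper's statement.
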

\begin{proof}
The result follows by taking $\theta_0$ such that $2c\theta_0 C_K \le \e/2$ (recall that, $D(s) \le C_K$ for every $s \in (0,1)$) and $\kappa \le \min\bigg\{\frac{\e\theta^{N-1}_0}{2\gamma},\e_1(\theta_0)\bigg\}$ and then simply applying Lemma \ref{lowperi}.
\end{proof}

\begin{lemma}\label{merol3} Let $(w,A)$ be a saddle-point of \eqref{P} and let $x \in K \subset \subset \Omega$. Then, for every $\e > 0$ there exists a positive radius $r_0 = r_0(N,M,K,\|f\|_{\Lrm^\infty(B_1)},\e)$ for which 
\[
E_\gamma(w,A;x, r) \le 2\e;
\]
whenever $r \le r_0$ and $\Per\big(A;B_{\theta_0^{-1}r}\big) \le \kappa(\e) \cdot \big(\frac{r}{\theta}\big)^{N-1}$.
\end{lemma}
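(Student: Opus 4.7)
The plan is to deduce this from Lemma \ref{merol4} through a blow-up/rescaling argument, relying only on the fact that the saddle-point property, together with the scaling of the excess and the scaling of the source term, are invariant in a controlled way under dilation.

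First I would fix $x \in K$ and $r \in (0,\dist(K,\partial\Omega)/\theta_0^{-1})$ and rescale the pair $(w,A)$ to $B_1$ at scale $r/\theta_0$ following the convention introduced in \eqref{blow2} of the proof of Proposition \ref{12}. Explicitly, set
\[
u(y) \coloneqq \frac{w\bigl((r/\theta_0)y + x\bigr)}{(r/\theta_0)^{k-\tfrac12}}, \quad A' \coloneqq \frac{A - x}{r/\theta_0}, \quad f'(y) \coloneqq (r/\theta_0)^{k+\tfrac12}\, f\bigl((r/\theta_0)y + x\bigr).
\]
A direct computation using the $k$-homogeneity of $\A$ gives $\A u(y) = (r/\theta_0)^{1/2}\,\A w((r/\theta_0)y+x)$, from which
\[
D(u;0,\rho) = D(w;x,(r/\theta_0)\rho), \qquad \frac{\Per(A';B_\rho)}{\rho^{N-1}} = \frac{\Per(A;B_{(r/\theta_0)\rho}(x))}{((r/\theta_0)\rho)^{N-1}},
\]
for every $\rho \in (0,1]$. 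In particular $E_\gamma(u,A';0,\theta_0) = E_\gamma(w,A;x,r)$. Moreover $\A^*(\sigma_{A'}\A u) = f'$ in $\mathcal D'(B_1;\R^d)$ and the saddle-point property of $(w,A)$ transfers to $(u,A')$ on $B_1$ (this is formally identical to the scaling step carried out in the proof of Proposition \ref{12}).

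The hypothesis $\Per\bigl(A;B_{\theta_0^{-1}r}(x)\bigr) \le \kappa(\e)\,(r/\theta_0)^{N-1}$ rewrites exactly as $\Per(A';B_1) \le \kappa(\e)$. Hence Lemma \ref{merol4} applies to $(u,A')$ and yields
\[
E_\gamma(u,A';0,\theta_0) \le \e + c_1\,\|f'\|_{\Lrm^\infty(B_1)}^{2}.
\]
Translating back with $\|f'\|_{\Lrm^\infty(B_1)}^{2} = (r/\theta_0)^{2k+1}\|f\|_{\Lrm^\infty(B_{r/\theta_0}(x))}^{2} \le (r/\theta_0)^{2k+1}\|f\|_{\Lrm^\infty(\Omega)}^{2}$, we obtain
\[
E_\gamma(w,A;x,r) \le \e + c_1\,(r/\theta_0)^{2k+1}\,\|f\|_{\Lrm^\infty(\Omega)}^{2}.
\]

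Since $k\ge 1$, the exponent $2k+1 \ge 3$ is strictly positive, so it suffices to choose
\[
r_0 \coloneqq \theta_0\,\min\!\left\{\,1,\; \bigl(\e / (c_1\|f\|_{\Lrm^\infty(\Omega)}^{2}+1)\bigr)^{1/(2k+1)}\right\}
\]
to force the last term to be $\le \e$ whenever $r \le r_0$, which gives the asserted bound $E_\gamma(w,A;x,r) \le 2\e$. The argument is purely a change of variables plus an application of the already-established Lemma \ref{merol4}; there is no serious obstacle, the only care needed is to use the same scaling exponents (the $1/2$ shift in $w$ and $f$) that ensure both that $(u,A')$ remains a saddle point on $B_1$ and that the excess $E_\gamma$ is scale invariant.
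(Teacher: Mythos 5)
Your proof is correct and follows essentially the same route as the paper's: rescale $(w,A)$ to the unit ball at scale $s = \theta_0^{-1}r$ using the conventions of \eqref{blow2}, observe that the hypothesis becomes $\Per(A';B_1)\le\kappa(\e)$, apply Lemma \ref{merol4}, and then choose $r_0$ small enough that the rescaled source term $c_1 s^{2k+1}\|f\|^2_{\Lrm^\infty}$ is at most $\e$. The only difference is the explicit form of $r_0$, which is immaterial.
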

\begin{proof} Let $r_0$ be a positive constant such that $c_1 r_0^{2k+1}\|f\|_{\Lrm^\infty(B_1)}^2 \le \theta_0^{2k +1}\e$ and let us set $s \coloneqq \theta_0^{-1}r$. Since 
\[\Per (A^{x,s};B_1) = s^{-(N-1)}\Per(A;B_{s}) \le \kappa(\e),\]
 it follows from the previous lemma and a rescaling argument that
\[
E_\gamma(w,A;r) = E_\gamma(w,A;\theta_0 s) \le \e + c_1\|f^{s}\|_{\Lrm^\infty(B_1)}^2 =
\e + c_1s^{2k +1}\|f\|_{\Lrm^\infty(B_1)}^2 \le 2\e. 
\]
\end{proof}

\begin{theorem}[lower bound]\label{jeb} Let $(w,A)$ be a solution of problem \eqref{P} in $\Omega$.
Let $K \subset \subset \Omega$ be a 
compact subset. Then, there exist positive constants $\lambda_K$ and $r_K$ depending only on $K$, 
the dimension $N$, the constant $M$ in the assumption \eqref{eq:ellipticity}, and $f$ such that 
\begin{equation*}
\Per(A;B_r(x)) \ge \lambda_Kr^{N-1},\tag{LB}
\end{equation*}
for every $r \in (0,r_K)$ and every $x \in \partial^*A \cap K$.
\end{theorem}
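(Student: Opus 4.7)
The approach is a proof by contradiction that chains together the two preparatory results obtained earlier in this section: the bootstrap of Lemma \ref{merol3} (small perimeter density at scale $r$ upgrades to a small excess $E_\gamma$ at scale $\theta_0 r$) and the discrete monotonicity of Theorem \ref{eximp2} (whenever $E_\gamma \le \varepsilon_2$, its value at the next dyadic scale $\theta_2\rho$ is bounded by its value at $\rho$ plus a small source-term contribution). Together they will imply that once the perimeter density of $A$ is small at a point $x \in \partial^* A \cap K$, it must remain small at every subsequent dyadic scale centered at $x$---which contradicts the very definition of the reduced boundary.

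The first step is to fix $\varepsilon > 0$ with $2\varepsilon < \varepsilon_2$ and $3\varepsilon/\gamma < \mathrm{vol}(B_1')/2$, and then to take $\theta_0(\varepsilon)$, $\kappa(\varepsilon)$, $r_0(\varepsilon)$ as the constants provided by Lemma \ref{merol3}. I would set $\lambda_K \coloneqq \kappa(\varepsilon)$ and choose $r_K \in (0, \theta_0^{-1}r_0]$ small enough so that the geometric tail
\[
c_1\,\|f\|_{\Lrm^\infty(\Omega)}^2\,\sum_{j=0}^{\infty}(\theta_2^{j}\theta_0 r_K)^{2k+1}
\]
is bounded by $\varepsilon$ (possible since $\theta_2 < 1$, so the series is purely geometric in $\theta_2^{2k+1}$).

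Now assume for contradiction that there exist $x \in \partial^* A \cap K$ and $r \in (0, r_K)$ with $\Per(A;B_r(x)) < \lambda_K r^{N-1}$. Lemma \ref{merol3}, applied with $\tilde r = r$ and the hypothesis $\Per(A;B_{\theta_0^{-1}s}(x)) = \Per(A;B_r(x)) \le \kappa(\varepsilon)(s/\theta_0)^{N-1}$ for $s \coloneqq \theta_0 r$, yields $E_\gamma(w,A;x, s) \le 2\varepsilon < \varepsilon_2$. Iterating the rescaled form of Theorem \ref{eximp2},
\[
E_\gamma(w,A;x,\theta_2\rho) \;\le\; E_\gamma(w,A;x,\rho) \;+\; c_1\,\rho^{2k+1}\,\|f\|_{\Lrm^\infty(\Omega)}^2,
\]
valid as long as $E_\gamma(w,A;x,\rho) \le \varepsilon_2$ (the factor $\rho^{2k+1}$ arises from the scaling $f^{r,x}(y) = r^{k+1/2}f(ry+x)$ used in the proof of Theorem \ref{thm:CE}), we obtain by induction on $n$ that $E_\gamma(w,A;x,\theta_2^n s) \le 3\varepsilon$ for every $n$; the induction closes because the accumulated source tax never exceeds $\varepsilon$ by the choice of $r_K$. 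In particular,
\[
\frac{\Per(A;B_{\theta_2^n s}(x))}{(\theta_2^n s)^{N-1}} \;\le\; \frac{3\varepsilon}{\gamma} \;<\; \frac{\mathrm{vol}(B_1')}{2} \qquad \text{for every } n \in \mathbb N.
\]
Letting $n \to \infty$ contradicts the reduced-boundary identity $\lim_{\rho \downarrow 0}\Per(A;B_\rho(x))/(\mathrm{vol}(B_1')\rho^{N-1}) = 1$, finishing the argument.

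The main obstacle is the bookkeeping of the several smallness parameters and, above all, verifying that the hypothesis $E_\gamma \le \varepsilon_2$ propagates throughout the iteration; this is exactly why the source contribution must be controlled by a geometrically summable tail and not merely a bounded one, hence the critical appearance of the factor $\rho^{2k+1}$ in the rescaled monotonicity inequality. The dependencies of $\lambda_K$ and $r_K$ are on $K$, $N$, $M$, and $\|f\|_{\Lrm^\infty(\Omega)}$, exactly as required by the statement.
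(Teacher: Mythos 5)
Your proof follows essentially the same chain of reasoning as the paper's: bootstrap small perimeter density at a given scale via Lemma \ref{merol3} to small excess $E_\gamma$ at scale $\theta_0 r$, iterate the discrete monotonicity of Theorem \ref{eximp2} with a geometrically summable source contribution, and conclude by contradicting the reduced-boundary density identity. The one bookkeeping slip is in the constraint on $\varepsilon$: you impose $2\varepsilon < \varepsilon_2$, but the inductive application of Theorem \ref{eximp2} requires $E_\gamma(w,A;x,\theta_2^n s)\le\varepsilon_2$ at every step, and your own bound only gives $E_\gamma(w,A;x,\theta_2^n s)\le 3\varepsilon$, which is not controlled by $\varepsilon_2$ under $2\varepsilon<\varepsilon_2$. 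Strengthening to $3\varepsilon\le\varepsilon_2$ (the paper takes $\varepsilon=\varepsilon_2/4$) closes the induction and, since $\varepsilon_2\le\mathrm{vol}(B_1')\cdot\gamma/2$ by Theorem \ref{eximp2}, also makes your second condition $3\varepsilon/\gamma<\mathrm{vol}(B_1')/2$ automatic.
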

\begin{proof}
Let $p(\theta_2) \coloneqq \sum_{h = 0}^\infty \theta_2^{(2k + 1)h} \in \R$ and define $r_1 \in (0,1)$ to be a positive constant for which
\[
r_1^{2k +1}c_1(\theta_2) p(\theta) \|f\|_{\Lrm^\infty(B_1)}^2 \le \frac{\e_2}{4}.
\]
We argue by contradiction. If the assertion does not hold, we would be able to find a point $x \in \partial^*A$ and  a radius $r \le \min\{r_0,r_1\}$ for which 
\[\Per\big(A;B_{\frac{r}{\theta_0}}(x)\big) \le \bigg(\frac{r}{\theta_0}\bigg)^{N-1} \kappa(\varepsilon), \qquad \e \coloneqq \frac{\e_2}{4}.
\] 
After translation, we may assume that $x =0$. The fact that $r \le r_0$ and Lemma \ref{merol3} yield the estimate 
\[
E_\gamma(w,A;r) \le 2\e \le \frac{\e_2}{2};
\]
in return, Lemma \ref{eximp2} and a rescaling argument give (recall that $f^r(y) = r^{k + \frac{1}{2}}f(ry)$)
\[
E_\gamma(w,A;\theta_2 r) \le E_\gamma(w^r,A^r;1) + c_1\|f^r\|_{\Lrm^\infty(B_1)}^2 \le \frac{\e_2}{2}  + c_1r^{2k + 1}\|f\|_{\Lrm^\infty(B_1)}^2 \le \e_2.
\]
A recursion of the same argument gives the estimate
\[
E_\gamma(w,A;\theta_2^j r) \le E_\gamma(w,A;r) + c_1r^{2k + 1}\|f\|_{\Lrm^\infty(B_1)}^2 \bigg(\sum_{h = 0}^j \theta_2^{(2k + 1)h}\bigg) \le \e_2.
\]
Taking the limit as $j \to \infty$ we get
\[
\limsup_{j \to \infty} \; \frac{\Per(A;B_{\theta_2^jr})}{\text{vol}(B_1')\cdot(\theta_2^jr)^{N-1}} \le 
\limsup_{j \to \infty} \frac{E_\gamma(w,A;\theta_2^j r)}{\text{vol}(B_1') \cdot \gamma} \le \frac{\e_2}{\text{vol}(B_1') \cdot \gamma} \le \frac{1}{2}.
\]
This a contradiction to the fact that $x = 0 \in \partial^*A$ (cf. Section \ref{sec:gmt})

\end{proof}

\begin{corollary}\label{extender}
Let $(w,A)$ be a solution for problem \eqref{P} in $\Omega$. 
Let $K \subset \subset \Omega$ be a  
compact subset. Then, there exist positive constants $\lambda_K$ and $r_K$ depending only on $K$, 
the dimension $N$, and $f$ such that 
\begin{equation*}
 \Per(A;B_r(x)) \ge \lambda_Kr^{N-1},
\end{equation*}
for every $r \in (0,r_K)$ and for every $x \in \partial A \cap K$.
\end{corollary}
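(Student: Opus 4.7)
The plan is to promote the lower bound from the reduced boundary to the full topological boundary by a simple approximation argument, exploiting the convention \eqref{eq:closed} that $\partial A = \overline{\partial^* A}$.

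First I would fix an intermediate compact set $K'$ with $K \subset \subset K' \subset \subset \Omega$; for concreteness, $K' := \{ y \in \Omega : \dist(y, K) \le \tfrac{1}{2}\dist(K, \partial \Omega)\}$. Apply Theorem \ref{jeb} to $K'$ to obtain positive constants $\lambda_{K'}$ and $r_{K'}$ such that
\[
\Per(A; B_{s}(y)) \ge \lambda_{K'}\, s^{N-1}, \qquad \text{for all } y \in \partial^* A \cap K',\ s \in (0, r_{K'}).
\]
I then set $\lambda_K := \lambda_{K'}$ and choose $r_K \in (0, r_{K'})$ small enough that $B_{r_K}(x) \subset \subset K'$ for every $x \in K$; this is possible by the separation between $K$ and $\partial K'$.

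Next, for any $x \in \partial A \cap K$ and any $r \in (0, r_K)$, use \eqref{eq:closed} to pick a sequence $\{x_h\} \subset \partial^* A$ with $x_h \to x$. For $h$ sufficiently large, $|x_h - x| < r$ and $x_h \in K'$ by the choice of $r_K$. The elementary inclusion $B_{r - |x_h - x|}(x_h) \subset B_r(x)$ and the monotonicity of $\Per(A; \cdot)$ as a Radon measure yield
\[
\Per(A; B_r(x)) \;\ge\; \Per\bigl(A; B_{r - |x_h - x|}(x_h)\bigr) \;\ge\; \lambda_{K'} \bigl(r - |x_h - x|\bigr)^{N-1}.
\]
Letting $h \to \infty$ gives $\Per(A; B_r(x)) \ge \lambda_K r^{N-1}$, which is the desired bound.

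There is essentially no obstacle here; the only thing to verify carefully is the choice of radii so that $B_r(x) \subset \subset K'$, ensuring that Theorem \ref{jeb} can be invoked at the approximating points $x_h$. The argument is robust: the lower bound at points of $\partial^* A$ transfers to its closure by the sheer monotonicity and lower semicontinuity of perimeter on nested balls.
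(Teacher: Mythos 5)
Your proposal is correct and is essentially the same argument the paper uses: the paper's proof is the one-line observation that the lower bound \eqref{LB} is a topologically closed property and therefore extends from $\partial^* A$ to its closure $\overline{\partial^* A} = \supp \mu_A = \partial A$ via the convention \eqref{eq:closed}. You have simply spelled out that observation in full — choosing an intermediate compact $K'$, approximating $x \in \partial A \cap K$ by reduced-boundary points $x_h$, and passing to the limit using ball inclusion and monotonicity of the perimeter measure — which is a sound and complete way to make the paper's terse claim rigorous.
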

\begin{proof}
The property \eqref{LB} from the Lower Bound Theorem is a topologically closed property, i.e., it extends to $\overline{\partial^*A} = \supp\mu_A = \partial A$ (cf. \eqref{eq:closed}).
 \end{proof}

\begin{corollary}\label{essential} Under the same assumptions of Theorem \ref{jeb}, the following characterization for the to\-po\-lo\-gi\-cal boundary of $A$ holds:
\[
\partial A = \partial^* A \cup \Sigma, \quad \text{where $\mathcal H^{N-1}(\Sigma) = 0$}.
\]
\end{corollary}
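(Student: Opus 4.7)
The decomposition $\partial A = \partial^* A \cup \Sigma$, where $\Sigma \coloneqq \partial A \setminus \partial^* A$, is tautological, so the whole content of the corollary is the vanishing $\mathcal{H}^{N-1}(\Sigma) = 0$. My plan is to deduce this from Corollary \ref{extender} via the classical density-comparison principle between Radon measures and Hausdorff measures.

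Recall from De Giorgi's structure theorem that $|\mu_A| = \mathcal{H}^{N-1} \llcorner \partial^* A$ as Radon measures. In particular, because $\Sigma$ and $\partial^*A$ are disjoint by definition,
\[
|\mu_A|(\Sigma) = \mathcal{H}^{N-1}(\partial^* A \cap \Sigma) = 0.
\]
On the other hand, by Corollary \ref{extender}, for every compact $K \subset \subset \Omega$ there exist $\lambda_K, r_K > 0$ such that every $x \in \partial A \cap K$ satisfies
\[
|\mu_A|(B_r(x)) = \Per(A; B_r(x)) \ge \lambda_K r^{N-1} \qquad \text{for all } r \in (0, r_K).
\]
In particular, every point $x \in \Sigma \cap K$ has strictly positive upper $(N-1)$-density of the Radon measure $|\mu_A|$, namely $\Theta^{*,N-1}(|\mu_A|, x) \ge \lambda_K / \omega_{N-1}$, where $\omega_{N-1}$ is the normalising constant in the definition of $\mathcal{H}^{N-1}$.

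The final step is to invoke the standard density-comparison result (see, e.g., \cite[Theorem 2.56]{APF} or \cite[Theorem 2.10.19(1)]{FedererBook}): if $\mu$ is a Radon measure on $\R^N$ and $B$ is a Borel set on which $\Theta^{*,s}(\mu, \cdot) \ge t > 0$, then $\mu(B) \ge t \, \mathcal{H}^s(B)$. Applying this with $\mu = |\mu_A|$, $s = N - 1$, $t = \lambda_K / \omega_{N-1}$, and $B = \Sigma \cap K$, the combination
\[
0 = |\mu_A|(\Sigma \cap K) \ge \frac{\lambda_K}{\omega_{N-1}} \, \mathcal{H}^{N-1}(\Sigma \cap K)
\]
forces $\mathcal{H}^{N-1}(\Sigma \cap K) = 0$. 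Exhausting $\Omega$ by an increasing sequence of such compact sets yields $\mathcal{H}^{N-1}(\Sigma) = 0$, as desired.

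There is no real obstacle here; the whole proof is a direct bookkeeping of what Corollary \ref{extender} buys once paired with the measure-density machinery. The only mild subtlety is making sure that the lower bound indeed holds at every $x$ of the topological boundary (not only the reduced one) -- but this is precisely the content of Corollary \ref{extender}, which extends the reduced-boundary estimate \eqref{LB} by the closedness of the condition together with $\partial A = \overline{\partial^* A}$.
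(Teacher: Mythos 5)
Your proof is correct and matches the paper's own argument, which condenses the same reasoning into the statement that Corollary \ref{extender} yields $\mathcal H^{N-1}\llcorner\partial A \ll |\mu_A|$ locally in $\Omega$, from which the conclusion follows via De Giorgi's Structure Theorem. You have simply unpacked the density-comparison step (lower density bound at every point of $\partial A$, comparison between $|\mu_A|$ and $\mathcal H^{N-1}$, exhaustion by compacts) that the paper leaves implicit.
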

\begin{proof}
An immediate consequence of the previous corollary is that $\mathcal H^{N-1}\llcorner \partial A \ll |\mu_A|$ as measures in $\Omega$. The assertion follows by De Giorgi's Structure Theorem. 
\end{proof}

\section{Proof of Theorem \ref{main}}\label{FE}

As we have established in the past section, we will assume that
for every $K\subset \subset \Omega$ there exist positive constants $\lambda_K,C_K$ such that 
$D(w;x,r) \le C_K$ and
\begin{equation*}\tag{LB}\Per(A,B_r(x)) \ge \lambda_K r^{N-1}, \qquad \forall \; x \in (\partial A \cap K), \forall \;r \in (0,\dist(K,\partial \Omega)).\end{equation*}
{\bf Half-space regularity.} Throughout this section we shall work with the additional assumption for solutions of the half-space problem: let $H \coloneqq \{\;x \in \R^N \; : \; x_N > 0\;\}$ and let $\sigma_H$ be the two-point valued tensor defined in \eqref{eq:definition} for $\Omega = B_1$ (so that $\sigma_ H = \sigma_1$ in $H \cap B_1$), then the operator
\[
P_H u \coloneqq \A^* (\sigma_H \A u) 
\]
is hypoelliptic in $B_1 \setminus \partial H$ in the sense that, if $w \in \Lrm^2(B_1;\R^d)$, then
\begin{equation}\label{eq:hypo}
P_H w = 0 \quad \Rightarrow \quad w \in \Crm^\infty(\overline{B_r^+};\R^d) \cup \Crm^\infty(\overline{B_r^-};\R^d)  \quad \text{for every $0 < r < 1$}. \footnote{The notation $B_r^\pm$ stands for the upper and lower half ball of radius $r$: $B_r \cap H$ and $B_r \cap - H$ respectively.}
\end{equation}
Furthermore, there exists a positive constant $c^* = c^*(N,M,\A)$ such that
\begin{equation}\label{eq:halfplane}
\begin{split}
\frac{1}{\rho^N}\int_{B_\rho} |\nabla^k w|^2 \dd x \le c^*\int_{B_1} |\nabla^k w|^2 \dd x  
\qquad \text{for all $0 < \rho \le \frac{1}{2}$},\\
\frac{1}{\rho^N}\int_{B_\rho} |\A w|^2 \dd x \le c^*\int_{B_1} |\A w|^2 \dd x  
\qquad \text{for all $0 < \rho \le \frac{1}{2}$},\\
\sup_{B_\rho^+ \cup B^-_\rho} |\nabla^{k+1} w|^2 \le 
c^*\int_{B_1} |w|^2 \dd x  
\qquad \text{for all $0 < \rho \le \frac{1}{2}$}.
\end{split}
\end{equation}

\begin{remark}[half-space regularity in applications]\label{nomas} For $1$-st order operators of gradient form it is relatively simple to show that such estimates as in \eqref{eq:halfplane} hold. This case includes gradients and symmetrized gradients; while the linear plate equations may be also reduced to this case (cf. Remark \ref{rem:plate}).
	
	 A sketch of the proof is as follows: The first step is to observe that the tangential derivatives ($i \neq N$) $\partial_i w$ of a solution $w$ of $P_H u = 0$ are also solutions of $P_H u = 0$. The second step is to repeat recursively the previous step and use the Caccioppoli inequality from Lemma \ref{rem:cac} to estimate 
	\begin{equation}\label{eq:caciohalf}
	\int_{B_{1/2}} |\partial^\alpha w|^2 \dd x \le C(|\alpha|)\int_{B_1} |w|^2 \dd x \qquad \text{for arbitrary $\alpha$ with $\alpha_N \le 1$}.
	\end{equation}
	The third step consists in using the ellipticity of $A_N = \mathbb A(\mathbf e_N)$\footnote{Recall that, for a $1$-st order operator as in \eqref{def:ogf}, the coefficients $A_\alpha$ can be simply denoted by $A_i$ with $i = 1,\dots,N$.} (cf. Remark \ref{rem:wave}) and the equation to express $\partial_{NN} w$ in terms of the rest of derivatives: The tensor $(A^T_N \,\sigma \, A_N)$ is invertible, this can be seen from the inequality $|\mathbb A(\mathbf e_N) z|^2 \ge \lambda(\A)|z|^2$ for every $z \in \R^d$ (cf. \ref{rem:wave}) and the fact that $\sigma_H$ satisfies G{\aa}rding's strong inequality \eqref{eq:ellipticity} with $M^{-1}$. Hence, using that $P_H w = 0$, we may write 
\begin{equation}\label{heynckes}
\partial_{NN} w = - (A^T_N \,\sigma_H \, A_N)^{-1} \sum_{ij \neq NN} (A^T_i \, \sigma_1 \, A_j) \partial_{ij} w \quad \text{in $B^+_1$},
\end{equation}
from which estimates for $\partial_{NN} w$ of the form \eqref{eq:caciohalf} in the upper half ball easily follow (similarly for the lower half ball). Further $\partial_N$ differentiation of the equation in $B^\pm_1$ and iteration of this procedure  together with the Sobolev embedding yield bounds as in 
\eqref{eq:halfplane}.

For arbitrary higher-order gradients and other general elliptic systems one cannot rely on the same method. However, the Schauder and $\Lrm^p$ boundary regularity of such systems has been systematically developed in \cite{Agmon1,Agmon2} through the so called {\it complementing condition}. In the case of strongly elliptic systems  (cf. \eqref{eq:ellipticity} and  \eqref{eq:strongly elliptic systems}) this complementing condition is fulfilled, see \cite[pp 43-44]{Agmon2}; see also \cite{Simon97} where a closely related {\it natural notion}  of hypoellipticity of the half-space problem is assumed. 
\end{remark}

\noindent {\bf Flatness excess.} Given a set $A \subset \R^N$ of locally finite perimeter, the {\it flatness excess} of $A$ at $x$ for scale $r$ and with respect to the direction $\nu \in \mathbb S^{n-1}$, is defined as
\begin{align*}{e}(A;x,r,\nu) \coloneqq  \frac{1}{r^{N-1}} \int_{C(x,r,\nu) \cap \partial^*A} \frac{|\nu_E(y) - \nu|^2}{2} \, \dx\mathcal H^{n-1}(y). 
\end{align*}
Here, $C(x,r,\nu)$ denotes for the cylinder centered at $x$ with height $2$, that is parallel to $\nu$, of radius $r$.

Intuitively, the flatness excess expresses for a set $A$, the deviation from being a hyperplane $H$ at a given scale $r$. 
 Again, up to re-scaling, translating and rotating, it will be enough to work the case
$
x = 0,  \nu = \mathbf e_N$, and $r = 1$. 
In this case, we will simply write $\mathrm e(A)$. The hyper-plane energy excess is defined as
\[
 H_{\text{ex}}(w,A;x,r,\nu) \coloneqq  e(A;x,r,\nu) + D(w,A;x,r),
\]
and as long as its dependencies are understood we will simply write $H_{\text{ex}}(r) = e(r) + D(r)$.
  
The following result relies
on the (\ref{LB}) property, a proof can be found in \cite[\S 5.3]{FedererBook} or \cite[Theorem 22.8]{MaggiBook}.

\begin{lemma}[Height bound]\label{HBT} There exist positive constants  $c_1^* = c_1^*(N)$  and $\varepsilon^*_1 =\varepsilon^*_1(N)$ with the following property.
 If $A \subset \R^N$ is a set of locally finite perimeter with the (\ref{LB}) property,
 \[
  0 \in \partial A \quad \text{and}\quad \mathrm e(9) \le \varepsilon_1^*,
 \]
then
\begin{equation*}\label{HB}
 \tag{HB} \sup\bigg\{|y_N| : y \in B'_1 \times [-1,1] \cap \partial A \bigg\} \le c_1^*\cdot  e(4)^\frac{1}{2N -2}.
\end{equation*}
\end{lemma}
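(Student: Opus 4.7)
This is the classical height-bound lemma from geometric measure theory (Federer; compare \cite[Theorem 22.8]{MaggiBook}), and my plan is to execute the standard contradiction argument, observing that it depends on $A$ only through the density lower bound \eqref{LB}. First I would suppose, toward a contradiction, that some $y = (y', y_N) \in (B'_1 \times [-1,1]) \cap \partial A$ has $h \coloneqq |y_N|$ larger than the claimed threshold. After passing to $\R^N \setminus A$ if needed, I may take $y_N = h > 0$, and by shrinking $\varepsilon_1^*$ I can force $h \ll 1$, so that $B_{h/2}(y) \subset C(0,4,\mathbf e_N) =: C_4$. The strategy is then to extract incompatible bounds on $\mathcal H^{N-1}(\partial^* A \cap B_{h/2}(y))$.

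The lower bound is immediate from \eqref{LB} applied at $y$, giving $\mathcal H^{N-1}(\partial^* A \cap B_{h/2}(y)) \ge \lambda (h/2)^{N-1}$. For the upper bound I would exploit the smallness of the flatness excess via Chebyshev's inequality. Setting
\[
F_s \coloneqq \bigl\{ z \in \partial^* A \cap C_4 \, : \, |\nu_A(z) - \mathbf e_N|^2 \le s \bigr\},
\]
the excess estimate gives $\mathcal H^{N-1}((\partial^* A \cap C_4) \setminus F_s) \le 2\cdot 4^{N-1}\,\mathrm e(4)/s$. On the ``flat'' set $F_s$, the vertical projection $z \mapsto z'$ is essentially injective with bounded multiplicity onto $B'_1$; so $F_s$ alone cannot reconcile the presence of $\partial A$ simultaneously at heights $0$ (via $0 \in \partial A$) and $h$ (via $y$).

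To quantify this tension I would apply the co-area formula to $z \mapsto z_N$: because $0$ and $y$ lie in $\partial A$ at different heights, the slice $\partial^* A \cap \{z_N = t\} \cap C_1$ must be $\mathcal H^{N-2}$-nontrivial for $\mathcal L^1$-a.e.\ $t \in (0,h)$, which, integrated, forces the ``non-flat'' set $(\partial^* A \cap C_4) \setminus F_s$ to carry $\mathcal H^{N-1}$-mass of order at least $h^{N-1}$. Combining with the Chebyshev bound yields $s\cdot c_N h^{N-1} \le C_N \mathrm e(4)$; optimizing in $s$ (the balance occurs at $s \sim h^{N-1}$, where $F_s$ ceases to supply enough flat perimeter) produces $h^{2(N-1)} \le C\,\mathrm e(4)$, i.e.\ $h \le c_1^*\,\mathrm e(4)^{1/(2N-2)}$ — a contradiction.

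The main obstacle is the sharp slicing-and-projection estimate giving the $c_N h^{N-1}$ lower bound on the non-flat mass; this is where the square-root loss (and hence the exponent $1/(2N-2)$) originates, and it is precisely the content of the technical lemmas preceding \cite[Theorem 22.8]{MaggiBook}. Since these rely only on \eqref{LB} for $A$ and not on any variational extremality property, I expect the entire argument to transfer to our setting with no modifications beyond a careful tracking of the dimensional constants $c_1^* = c_1^*(N)$ and $\varepsilon_1^* = \varepsilon_1^*(N)$.
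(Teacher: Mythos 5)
The paper itself does not prove this lemma: it simply records that the conclusion depends on $A$ only through the lower density property \eqref{LB} and cites \cite[\S 5.3]{FedererBook} and \cite[Theorem 22.8]{MaggiBook}. You correctly identify that observation, and your final paragraph correctly notes that this is the reason the classical result transfers to the present setting. To that extent your reading of the situation matches the paper.

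Where your sketch goes astray is in the quantitative core (your ``step 4'') and the ``optimize in $s$'' conclusion. The assertion that, because $0$ and $y$ both lie on $\partial A$, the horizontal slice $\partial^*A\cap\{z_N=t\}\cap C_1$ is $\mathcal H^{N-2}$-nontrivial for a.e.\ $t\in(0,h)$ and that this, ``integrated,'' forces $\mathcal H^{N-1}$-mass $\gtrsim h^{N-1}$ on the non-flat set, is not justified: $\partial A$ need not be connected between $0$ and $y$, so slices at intermediate heights can be empty; and even where nonempty the coarea identity only gives $\int_0^h\mathcal H^{N-2}(\text{slice}_t)\,\dd t=\int|\pi'\nu_A|\,\dd\mathcal H^{N-1}$, which provides no lower bound on its own without first controlling the volume fractions of $A$ on each slice and invoking the relative isoperimetric inequality there. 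Your closing dimensional analysis (balance at $s\sim h^{N-1}$, producing $h^{2(N-1)}\lesssim e$) also does not track: $s$ is a bound on $|\nu_A-\mathbf e_N|^2\le 4$ and cannot scale like $h^{N-1}$; if one actually inserts the slice lower bounds coming from \eqref{LB}, the Chebyshev/coarea combination as you set it up does not reproduce the exponent $1/(2N-2)$. The standard argument has a different shape: first a ``small excess position'' lemma (volume fractions of $A$ above/below height $0$ controlled by $e$), then a divergence-theorem plus slicing estimate showing $\Per(A;C_1\cap\{z_N>t\})\lesssim\sqrt{e(4)}$ once $t\gtrsim e(4)^{1/(2N-2)}$ — with the Cauchy--Schwarz loss you mention entering precisely here — and only then is \eqref{LB} applied in a ball $B_{h/2}(y)$ to derive the contradiction. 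Since the paper leaves the proof to the cited sources, the honest move in your write-up is to do the same, and if you want to sketch it, to sketch that chain of estimates rather than the Chebyshev balancing you describe.
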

 

The next decay lemma is the half-space problem analog of Lemma \ref{lowperi}.
The proof is similar except that it relies 
on the half-space regularity assumptions \eqref{eq:hypo}-\eqref{eq:halfplane} (instead of the ones given by Lemma \ref{lem:cosntant}), and the Height bound Lemma stated above.
\begin{lemma}[approximative solutions of the half-space problem]\label{halfplanedecay} Let $(w,A)$ be a solution of problem \eqref{P} in $B_1$. 
Then, for every $\theta_1^* \in (0,1/2)$ there exist positive constants $c^*_2(\theta_1^*,N,M)$ and $\varepsilon_2^*(\theta^*_1,N,M)$ 
  such that either
  \[
  \int_{B_\rho} |\mathcal A w|^2 \dd x \le c^*_2 \rho^N\|f\|_{\Lrm^\infty(B_1)}^2,
  \]
  or 
  \[
  \int_{B_\rho} |\mathcal A w|^2 \dd x \le 2c^* \rho^{N}\int_{B_1} |\mathcal A w|^2 \dd x 
  \quad \text{for every $\rho \in
  	[\theta_1,1)$},
  \]
   where $c^* = c^*(N,M)$ is the constant from the regularity condition \eqref{eq:halfplane};
  whenever 
  \[
  \Per(A;B_1) \le \varepsilon_2^*.\]
\end{lemma}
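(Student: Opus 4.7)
I would proceed by contradiction, mirroring the blow-up scheme of Lemma \ref{lowperi}. First observe that the case $\rho \ge 1/2$ follows directly from $c^* \ge 2^N$, so only $\rho \in [\theta_1^*, 1/2)$ is at issue. Negating the statement provides sequences of local solutions $(w_h, A_h)$ of \eqref{P} on $B_1$ with sources $f_h$ and radii $\rho_h \in [\theta_1^*, 1/2)$ such that, setting $\delta_h^2 := \int_{B_{\rho_h}} |\A w_h|^2$,
\[\delta_h^2 > 2 c^* \rho_h^N \int_{B_1} |\A w_h|^2, \quad \Per(A_h; B_1) \le \tfrac{1}{h}, \quad \rho_h^N \|f_h\|_{\Lrm^\infty(B_1)}^2 < \tfrac{\delta_h^2}{h}.\]
The normalizations $u_h := \delta_h^{-1} w_h$ and $g_h := \delta_h^{-1} f_h$ satisfy $g_h \to 0$ in $\Lrm^2$, $\int_{B_{\rho_h}} |\A u_h|^2 = 1$, $\int_{B_1} |\A u_h|^2 < (2c^* \rho_h^N)^{-1}$, and $\A^*(\sigma_{A_h} \A u_h) = g_h$. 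A Caccioppoli estimate (Lemma \ref{rem:cac}) together with the compactness property of $\A$ then gives a uniform $\Wrm^{k,2}(B_{1/2}; \R^d)$ bound on $\{u_h\}$; up to extracting a subsequence, $u_h \rightharpoonup u$ weakly.

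Next I would identify the limit tensor. Since $\Per(A_h; B_1) \to 0$, $\BV$-compactness combined with the relative isoperimetric inequality forces $\mathbf{1}_{A_h}$ to converge in $\Lrm^1(B_1)$; after possibly swapping $A_h$ with its complement, the limit is identically $0$, so $\sigma_{A_h} \to \sigma_2$ strongly in $\Lrm^p(B_1)$ for every $p < \infty$. The Height Bound Lemma \ref{HBT}, combined with the density lower bound \eqref{LB}, plays the role of confining the residual fragments of $\partial A_h$ into thin tubular neighborhoods of a hyperplane through the origin, thereby ruling out pathological interface oscillations that would otherwise obstruct the strong convergence $\sigma_{A_h} \to \sigma_\infty$. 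Applying the compensated compactness Lemma \ref{compensated} then gives $\A^*(\sigma_2 \A u) = 0$ in $\mathcal D'(B_1; \R^d)$ together with $\A u_h \to \A u$ strongly in $\Lrm^2_{\textnormal{loc}}(B_1)$; in particular, along $\rho_h \to s \in [\theta_1^*, 1/2)$, $\int_{B_s} |\A u|^2 = 1$.

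Finally, the operator $u \mapsto \A^*(\sigma_2 \A u)$ is a special case of the half-space operator $P_H$ (with $\sigma_H$ equal to a constant), so the regularity estimate \eqref{eq:halfplane} applies and yields $\int_{B_s} |\A u|^2 \le c^* s^N \int_{B_1} |\A u|^2$. Combining with the contradiction hypothesis and weak lower semicontinuity of the $\Lrm^2$ norm,
\[1 = \int_{B_s} |\A u|^2 \le c^* s^N \liminf_h \int_{B_1} |\A u_h|^2 \le \frac{c^* s^N}{2 c^* \rho_h^N} \to \frac{s^N}{2 s^N} = \tfrac{1}{2},\]
a contradiction. The principal obstacle I expect is the identification step: one must prevent the reduced boundary $\partial^* A_h$ from accumulating on complicated non-planar sets despite $\Per(A_h; B_1) \to 0$, and it is precisely here that the Height Bound Lemma \ref{HBT} in conjunction with the lower density bound \eqref{LB} is used to confine any residual interface to a hyperplane-like geometry, securing the strong $\Lrm^p$ convergence of $\sigma_{A_h}$ required by Lemma \ref{compensated}.
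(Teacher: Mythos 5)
Your blow-up scheme correctly transcribes the contradiction argument from Lemma~\ref{lowperi}, but it does not actually realize the half-space scenario that the paper intends, and the invocation of the Height bound Lemma is a logical non sequitur rather than a genuine ingredient. The tell is your own conclusion: starting from $\Per(A_h;B_1)\to 0$ you deduce, via the relative isoperimetric inequality, that $\mathbf 1_{A_h}\to 0$ (or $1$) in $\Lrm^1$, so $\sigma_{A_h}$ converges to a single \emph{constant} tensor $\sigma_2$ (or $\sigma_1$). That convergence is already strong in $\Lrm^p$ for all $p<\infty$ because the sequence is uniformly bounded; there is nothing for the Height bound Lemma to do. Moreover Lemma~\ref{HBT} is only available under a smallness hypothesis on the flatness excess $\mathrm e(9)\le \varepsilon_1^*$ and with $0\in\partial A$, neither of which you establish, and its conclusion confines $\partial A_h$ near a \emph{hyperplane through the origin} -- which would contradict, not reinforce, the claim that $A_h$ converges to a set of measure zero.

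The deeper issue is that under your reading the lemma collapses into a verbatim restatement of Lemma~\ref{lowperi}: the limiting PDE $\A^*(\sigma_2\A u)=0$ is the constant-coefficient case already handled by Lemma~\ref{lem:cosntant}, so the correct decay constant would be $c$, not the half-space constant $c^*$, and neither \eqref{eq:hypo}--\eqref{eq:halfplane} nor Lemma~\ref{HBT} would be needed -- yet the paper explicitly says both are needed. The glossary confirms the discrepancy: $\varepsilon_2^*$ is listed as ``smallness of flatness excess,'' not of perimeter. In the intended argument the blow-up hypothesis is on the excess $\mathrm e(A;1)$ (with $0\in\partial A$), the Height bound Lemma then confines $\partial A_h$ to a slab of height $O(\mathrm e(A_h;4)^{1/(2N-2)})\to 0$ around $\{y_N=0\}$, so $\mathbf 1_{A_h}\to\mathbf 1_{H}$ and $\sigma_{A_h}\to\sigma_H$ (a genuine two-phase half-space tensor). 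Compensated compactness then yields $\A^*(\sigma_H\A u)=0$, and it is precisely here that the half-space regularity \eqref{eq:hypo}--\eqref{eq:halfplane} with constant $c^*$ is indispensable -- Lemma~\ref{lem:cosntant} cannot handle the discontinuous coefficient. Your proof, by sending the interface to measure zero rather than to a hyperplane, misses the entire reason this lemma exists as a separate statement.

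Beyond the conceptual mismatch, two smaller points: the limit radius $s=\lim\rho_h$ may equal $1/2$, so it should be taken in $[\theta_1^*,1/2]$ rather than $[\theta_1^*,1/2)$; and the a priori $\Wrm^{k,2}$ bound on $\{u_h\}$ requires a Caccioppoli inequality with right-hand side $g_h=\delta_h^{-1}f_h\to 0$, which Lemma~\ref{rem:cac} as stated (homogeneous equation) does not directly give -- one needs the inhomogeneous version, which is routine but should be flagged rather than elided.
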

\begin{remark}\label{improved}
Let $\delta \in (0,1)$.
Then there exists $\kappa^* = \kappa^*(N,M,\delta)$ such that if $e(1) \le \kappa^*$, and if one further assumes that the excess function $r \mapsto e(r)$ is monotone increasing, then the scaling $w(ry)/r^{(k - \frac{1}{2})}$ and 
the Iteration Lemma~\ref{iteration} imply that
\[
\frac{1}{r^{N-\delta}}\int_{B_r} |\A w|^2 \le C_\delta \big(\|\A w\|^2_{\Lrm^2(B_1)} + c_2^*\| f \|_{\Lrm^\infty(B_1)}^2 \cdot r^{2k + \delta}\big)\qquad \text{for every  $r \in (0,1/2)$},
\]
for some positive constant $C_\delta = C_\delta(N,M)$.
\end{remark}
The next crucial result can be found in \cite[Section 5]{Lin93}. We have decided not to include a proof because because the ideas remain the same: the estimate \eqref{lower}, 
the Height bound Lemma, the Lipschitz approximation Theorem, the estimates from Lemma \ref{halfplanedecay} and the higher integrability for solutions to elliptic equations.\footnote{$\Lrm^{2^*}(\Omega)$-integrability of $\A w$, for some exponent $2^* > 2$, can be established by standard methods through the use of the Caccioppoli inequality in Lemma \ref{rem:cac}.}

\begin{lemma}[flatness excess improvement] \label{key2}  Let $(w,A)$ be a saddle point of problem \eqref{P} in $\Omega$. There exist positive constants $\eta \in (0,1]$, $c^*_3$, and $\varepsilon_3$ depending only on $K$, the dimension $N$, the constant $M$ in \eqref{eq:ellipticity}, and $\|f\|_{\Lrm^\infty}$
with the following properties: If $(w,A)$ is a saddle point of problem \eqref{P} in $B_9$, and
\[
 \hex(9) \le \varepsilon_3^*,
\]
then, for every $r \in (0,9)$, there exists a 
direction $\nu(r) \in \mathbb S^{N-1}$
for which 
$$|\nu(r) - \mathbf e_N| \le c^*_3\,\hex(9) \quad \text{and} \quad \hex(r,\nu(r)) \le c^*_3r^\eta \hex(9).$$
\end{lemma}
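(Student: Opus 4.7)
The plan is to follow the classical flatness excess improvement strategy going back to De~Giorgi and adapted by Lin. I would first reduce the claim to a \emph{one-step discrete decay}: there exist $\theta^* \in (0,1/2)$, $\eta \in (0,1]$ and $\varepsilon^*_3 > 0$ such that whenever $\hex(w,A;9,\mathbf e_N) \le \varepsilon^*_3$ one can find a direction $\nu^* \in \mathbb S^{N-1}$ with
\[
|\nu^* - \mathbf e_N|^2 \le c\,\hex(9) \qquad \text{and} \qquad \hex(\theta^*,\nu^*) \le \theta^{*\,2\eta}\hex(9).
\]
Given this discrete decay, the lemma follows by invoking the Iteration Lemma~\ref{iteration} applied to the map $r \mapsto \hex(r,\nu(r))$ after fixing $\nu(r)$ through a telescoping sum whose tilts are controlled by $\sum_j \theta^{*\eta j}\hex(9)^{1/2}$; the summability of these tilts gives the claim $|\nu(r)-\mathbf e_N|\le c^*_3 \hex(9)^{1/2}$ together with the polynomial decay $\hex(r,\nu(r))\le c^*_3 r^\eta \hex(9)$.

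The core of the argument is therefore to prove the one-step decay by contradiction via a blow-up. Assume it fails along a sequence of saddle points $(w_h,A_h)$ in $B_9$ with $\varepsilon_h \coloneqq \hex_h(9) \to 0$. Combining the Height Bound (Lemma~\ref{HBT}) with the lower density bound \eqref{LB}, the topological boundary $\partial A_h \cap B_4$ is trapped in a strip of height $O(\varepsilon_h^{1/(2N-2)})$ around the hyperplane $\{x_N=0\}$. The Lipschitz approximation theorem (standard in the De~Giorgi--Federer--Maggi theory, whose hypotheses are available here thanks to \eqref{LB}) then provides Lipschitz functions $u_h \colon B'_1 \to \R$ whose subgraphs agree with $A_h$ outside an exceptional set whose $\mathcal H^{N-1}$-measure is controlled by $e_h(9)$. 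Setting $\tilde u_h \coloneqq \varepsilon_h^{-1/2} u_h$ and $\tilde w_h \coloneqq \varepsilon_h^{-1/2} w_h$, both sequences are uniformly bounded: $\tilde u_h$ in $\Wrm^{1,2}(B'_1)$ from the Poincar\'e--Sobolev estimates for the excess, and $\tilde w_h$ in $\Wrm^{\A}(B_4)$ (after subtracting $\A$-free correctors to absorb a constant) by construction.

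Next I would take the joint limit. Since $\mathds 1_{A_h} \to \mathds 1_H$ in $\Lrm^1_{\loc}$ where $H = \{x_N > 0\}$, the coefficients satisfy $\sigma_{A_h} \to \sigma_H$ strongly in $\Lrm^p_{\loc}$ for every finite $p$. By the compensated compactness Lemma~\ref{compensated}, any weak limit $\tilde w_\infty$ satisfies $\A^*(\sigma_H \A \tilde w_\infty) = 0$ in $B_4$; while linearising the first variation of \eqref{P} with respect to set variations around the flat interface (and passing through the Lipschitz approximations $\tilde u_h$) identifies $\tilde u_\infty$ as a solution of an elliptic equation on $B'_1$ coupled through the jump of the Cauchy data of $\tilde w_\infty$ across $\{x_N = 0\}$. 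At this point I invoke the half-space regularity assumptions \eqref{eq:hypo}--\eqref{eq:halfplane}: $\A \tilde w_\infty$ has the decay $\rho^{-N}\int_{B_\rho}|\A \tilde w_\infty|^2 \le c^* \int_{B_1}|\A \tilde w_\infty|^2$, and the tilt-plane decay for the harmonic limit $\tilde u_\infty$ gives $e(\theta^*,\nu^*_\infty)\le c\,\theta^{*\,2}\,e(1)$ for some $\nu^*_\infty$ determined by $\nabla' \tilde u_\infty(0)$. Choosing $\theta^*$ small and $\eta$ smaller than $\min\{1, N/2\}$ produces the strict inequality $\hex_\infty(\theta^*,\nu^*_\infty) < \theta^{*\,2\eta}\hex_\infty(1)$, contradicting the failure of the discrete decay.

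The main obstacle I expect is precisely the passage to the limit in the coupled system: the flatness excess only controls the interface in an $L^2$ sense while the rescaling $\varepsilon_h^{-1/2}$ amplifies this error, so care is needed to (i) exclude concentration of perimeter on the exceptional set of the Lipschitz approximation (handled by the upper bound of Theorem~\ref{thm:CE} applied above a small threshold), and (ii) certify that the linearised Euler--Lagrange system for $(\tilde u_\infty,\tilde w_\infty)$ is indeed the half-space problem $P_H$, rather than some degenerate limit, which is where the strict ellipticity of $(A_N^T \sigma_H A_N)$ from Remark~\ref{nomas} is used. The forcing term $f$ contributes only $O(r^{2k+\delta})$ to the decay (cf.\ Remark~\ref{improved}) and so is absorbed into the $r^\eta$ error by choosing $\eta<1/2$.
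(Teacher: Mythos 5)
The paper does not give a self-contained proof of this lemma; it defers to \cite[Section 5]{Lin93} and only lists the ingredients -- the lower density bound (LB), the Height Bound Lemma~\ref{HBT}, the Lipschitz approximation theorem, the half-space decay of Lemma~\ref{halfplanedecay}, and higher integrability for the elliptic system. Your blow-up argument uses exactly these ingredients in the standard De~Giorgi--Lin fashion, so the proposal is a correct expansion of the approach the paper cites (the only cosmetic discrepancy is that your tilt estimate carries the natural $\hex(9)^{1/2}$ scaling, which is what the excess definition -- a squared deviation -- actually yields, whereas the paper's statement writes $\hex(9)$).
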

\begingroup
\def\thetheorem{\ref{main}}
\begin{theorem}[partial regularity]\label{regularity interface} Let $(w,A)$ be a saddle point of problem \eqref{P} in $\Omega$. Assume that the operator $P_Hu = \A^*(\sigma \A u)$ is hypoelliptic and regularizing as in \eqref{eq:hypo}-\eqref{eq:halfplane}, and that the higher integrability condition 
\[
[\A \tilde u]^2_{\Lrm^{2,N-\delta}(B_{1/2})} \le c\|\A \tilde u\|^2_{\Lrm^2(B_1)}, \quad \text{for some $\delta \in [0,1)$},
\]
holds for every local minimizer $\tilde u$ of the energy $u \mapsto \int_{B_1} Q_{\B} W(\A u)$, where $u \in \Wrm^{\A}(B_1)$. Then there exists a positive constant $\eta \in (0,1]$ depending only on $N$ such that 
\[
 \mathcal H^{N-1}((\partial A \setminus \partial^* A) \cap \Omega) = 0, \quad \text{and} \quad \partial^* A \quad \text{is an open 
 $\Crm^{1,\eta/2}$-hypersurface in $\Omega$}.
\]
Moreover if $\A$ is a first-order differential operator, then $\A w \in \Crm^{0,\eta/8}_{\textnormal{loc}}(\Omega \setminus (\partial A \setminus \partial^* A))$;
and hence, the trace of $\A w$ exists on either side of $\partial^* A$.
\end{theorem}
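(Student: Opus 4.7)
The first assertion, $\mathcal{H}^{N-1}((\partial A\setminus \partial^* A)\cap \Omega)=0$, is already a direct consequence of the lower bound (LB) from Theorem~\ref{jeb} combined with De Giorgi's structure theorem; this is precisely the content of Corollary~\ref{essential}. So the work all lies in deducing the two regularity statements from the flatness excess improvement provided by Lemma~\ref{key2}.

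For the $\Crm^{1,\eta/2}$-regularity of $\partial^* A$ my plan is to iterate Lemma~\ref{key2} in the standard De Giorgi fashion. Fix $x_0\in \partial^* A\cap K$ and, after rescaling, assume $\hex(x_0,9)\le \varepsilon_3^*$. Applying Lemma~\ref{key2} to each dyadic scale $r=2^{-j}$ produces unit vectors $\nu_j \deq \nu(2^{-j})$ with
\[
\hex(x_0,2^{-j},\nu_j)\le c_3^*\, 2^{-j\eta}\hex(x_0,9), \qquad |\nu_{j+1}-\nu_j|\le C\,\sqrt{e(x_0,2^{-j},\nu_j)}\le C\, 2^{-j\eta/2}.
\]
Telescoping shows that $\nu_j\to \nu_A(x_0)$ with $|\nu_A(x_0)-\nu_j|\le C\,2^{-j\eta/2}$, and the same estimate holds uniformly for nearby points $y\in \partial^* A\cap B_\rho(x_0)$ once $\rho$ is small enough (the excess on slightly shifted cylinders is controlled by a triangle inequality and the lower perimeter bound). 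This delivers a uniform $\Crm^{0,\eta/2}$ modulus of continuity for $\nu_A$ on $\partial^* A$ near $x_0$, so $\partial^* A$ is locally a $\Crm^{1,\eta/2}$-hypersurface. Openness of $\partial^*A$ in $\partial A$ follows from the closedness of the singular set, which is immediate since $\hex$ is lower semi-continuous in $x$.

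For the Hölder regularity of $\A w$ in the first-order case, I would work in a neighbourhood $U$ of a point $x_0\in \partial^* A$ where $\partial^* A$ is already known to be a $\Crm^{1,\eta/2}$-graph. Using a $\Crm^{1,\eta/2}$-diffeomorphism $\Phi$ that flattens the interface to a hyperplane, the transformed function $\tilde w\coloneqq w\circ\Phi^{-1}$ solves $\A^*(\tilde\sigma\,\A \tilde w)=\tilde f$, where $\tilde \sigma$ is the two-point tensor associated to the half-space partition and lower-order corrections from the chart have $\Crm^{0,\eta/2}$ coefficients (this is where being first order matters: $k=1$ keeps the chain rule clean and the perturbation lower order). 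The half-space regularizing property \eqref{eq:hypo}-\eqref{eq:halfplane} combined with a Campanato-type iteration then applies: taking as input the bound $D(x_0,r)\le C r^\eta$ provided by Lemma~\ref{key2} and freezing coefficients, one compares $\A \tilde w$ on $B_\rho$ with the solution of the frozen-coefficient half-space problem, using Lemma~\ref{halfplanedecay} together with Remark~\ref{improved} to obtain a decay of the form
\[
\int_{B_\rho^\pm(x_0)}|\A \tilde w-(\A \tilde w)_{\rho,\pm}|^2 \,\dd y \le C\,\rho^{N+2\eta'},
\]
with $\eta'$ a small fixed fraction of $\eta$ (bookkeeping of the chart regularity, of the Morrey exponent lost in the perturbation, and of the iteration losses gives the stated exponent $\eta/8$). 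Campanato's embedding then yields $\A \tilde w\in \Crm^{0,\eta/8}$ on each half ball; pulling back along $\Phi$ gives $\A w\in \Crm^{0,\eta/8}_{\loc}(\Omega\setminus(\partial A\setminus \partial^* A))$, since interior smoothness away from $\partial A$ follows from Lemma~\ref{lem:cosntant}. The existence of traces on either side of $\partial^* A$ follows immediately.

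The main obstacle in this plan is the Hölder regularity of $\A w$: the flattening of $\partial^* A$ costs one to keep track of the perturbed operator and of two distinct half-space problems with possibly non-comparable coefficients $\sigma_1,\sigma_2$ on the two sides. The restriction to first-order $\A$ is precisely what keeps the perturbation under control and permits the Campanato iteration; for higher-order operators the same scheme is much more delicate because lower-order terms in the transformed equation involve derivatives of the chart up to order $k$, whose regularity is only $\Crm^{\eta/2}$ past the top order.
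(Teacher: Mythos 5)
The proposal matches the paper for the first assertion and shares the broad De~Giorgi strategy for the regularity of $\partial^*A$, but two places need attention.

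First, the openness argument is flawed. You write that openness of $\partial^*A$ in $\partial A$ ``is immediate since $\hex$ is lower semi-continuous in $x$.'' Lower semi-continuity gives exactly the wrong inclusion: for a l.s.c.\ function the set $\{\hex > \varepsilon\}$ is open (and $\{\hex\le\varepsilon\}$ closed), so it does not follow that small excess at $x$ propagates to nearby points. What you actually need is \emph{continuity} of $y\mapsto \hex(w,A;y,r,\nu)$ at $x$, and this requires choosing $r$ with $\mu_A(\partial B_r(x))=0$ (which is available for a.e.\ $r$), as the paper does. Without that you cannot conclude that a $\partial^*A$-neighborhood stays within the reach of Lemma~\ref{key2}.

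For the $\Crm^{1,\eta/2}$-regularity of $\partial^*A$ you propose iterating the excess decay at dyadic scales and controlling the oscillation of $\nu_A$ directly; the paper instead bounds the deviation $\Dev_{B_\delta(x)}(A,\rho)\le 2MC\rho^{N-1+\eta}$ and appeals to Tamanini's Theorem~\ref{tamanini}. Both routes are classical and correct; the paper's is shorter because Tamanini packages the oscillation estimate and the singular-set dimension bound, whereas your approach redoes that by hand.

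For $\A w\in\Crm^{0,\eta/8}_{\loc}$ your route (flatten the interface, compare with the frozen-coefficient half-space problem, iterate) is genuinely different from the paper's, and the essential technical step is missing. Your plan invokes Lemma~\ref{halfplanedecay}, but that lemma only yields a Morrey-type decay $\int_{B_\rho}|\A w|^2\lesssim\rho^N\int_{B_1}|\A w|^2$; to run a Campanato iteration you need a \emph{Campanato} decay, i.e.\ a bound on $\int_{B_\rho}|\A w-(\A w)_\rho|^2$. The obstruction is that $\A w$ jumps across the interface and simply working on half-balls separately does not make the comparison solution's oscillation controllable. The paper's resolution (Lemma~\ref{lem:R}) is to introduce the combined quantity $R_A u=(\nabla'u,\,A_N^T(\sigma_A\A u))$ and to prove a jump condition $[\mathbb A(\mathbf e_N)^T\cdot\tau]=0$ on $\partial H$ so that $R_H w$ is in fact $\Wrm^{1,2}_{\loc}$ across the interface and admits the Campanato estimate \eqref{Calpha}. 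Only afterwards is $\partial_N w$ recovered one side at a time from $R_A w$ via the ellipticity of $A_N^T\sigma_iA_N$. Your proposal does not identify this trick, and the claimed exponent $\eta/8$ is asserted through unspecified ``bookkeeping'' rather than derived; as written the Campanato step would not close.
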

\addtocounter{theorem}{-1}
\endgroup
\begin{proof}
\noindent{\bf The reduced boundary is an open hypersurface.} 
The first assertion $\mathcal H^{N-1}((\partial A \setminus \partial^* A) \cap \Omega) = 0$ is a direct consequence of 
Corollary \ref{essential}.

To see that $\partial^* A$ is relatively open in $\partial A$ we argue as follows: De Giorgi's Structure Theorem guarantees that for every $x \in \partial^*A$ there exist $r > 0$  (sufficiently small) and  $\nu \in \mathbb S^{N-1}$ such that 
\[
\hex(w,A;r,x,\nu) \le \frac{1}{2} \varepsilon^*_3, \quad \text{and} \quad \mu_A(\partial B_r(x)) = 0.
\]
The map $y \mapsto \mu_A(B_r(y)) = 0$ is continuous at $x$, therefore we may find $\delta(x) \in (0,1)$ such that
\begin{equation*}
\hex(w,A;r,y,\nu) \le \varepsilon^*_3 \quad \text{for every $y \in B_\delta(x) \cap \partial A$}.
\end{equation*}
We may then apply Lemma \ref{key2}  to get an estimate of the form
\[
\inf_{\xi \in \mathbb S^{N-1}} \hex(w,A;y,\rho,\xi) \le c^*_3\rho^\eta \hex(w,A;y,r,\nu) \quad \text{for all $y \in B_\delta(x)$, and all $\rho \in (0,r)$}.
\]
This and the first assertion of Lemma \ref{key2} imply that $y \in \partial^* A$ for every $y \in B_\delta(x) \cap \partial A$. Therefore, the reduced boundary $\partial^* A$ is a relatively open subset of the topological boundary $\partial A$.

We proceed to prove the regularity for $\partial^* A$.
It follows from the last equation that 
\begin{equation}\label{final}
D(w;y,\rho) \le \inf_{\xi \in \mathbb S^{N-1}} \hex(w,A;y,\rho,\xi) \le c^*_3\varepsilon^*_3\rho^\eta \le C\rho^{\eta}\end{equation}
\text{for every $y \in B_\delta(x)$, and every $\rho \in (0,r)$},
for some constant $C = C(C_{B_\delta(x)},\Lambda_{B_\delta(x)},N,M)$.

Through a simple comparison, we observe from \eqref{final} and the property that $(w,A)$ is a local saddle point of problem \eqref{P} in $B_\delta(x)$, that
\[
\Dev_{B_\delta(x)}(A,\rho) \le 2M \rho^{N-1}D(w;y,\rho) \le 2MC\rho^{N-1 + \eta}, \quad \text{for all $\rho \in (0,r)$ and every $y \in B_\delta(x)$}.
\]
We conclude with an application of 
Tamanini's Theorem \ref{tamanini}: 
\[\partial A = \partial^* A \; \text{ is a $\Crm^{1,\eta/2}$-hypersurface in 
$B_\delta(x)$}.\] 
The assertion follows by observing that the regularity of $\partial^* A$ is a local property.

\noindent{\bf Jump conditions for the hyper-space problem.} Let $\tau \in \Lrm_{\text{loc}}^2(B_1;Z) 
\cap (\Crm^\infty(\overline{B_\rho^+};Z)
\cup \Crm^\infty(\overline{B_\rho^-};Z))$ for every $\rho \in (0,1)$, assume furthermore that $\tau$ is a solution of the equation
\[
\A^* \tau = 0 \qquad \text{in $B_1$}. 
\]
Let $\eta \in \Crm^{\infty}_c(B_1';\R^d)$ be an arbitrary test function and choose a function 
$\varphi \in \Crm^{\infty}_c(B_1;\R^d)$ with the following property:
\[\varphi(y',y_N) = \frac{y_N^{k-1}}{{(k-1)}!} \eta(y') \qquad \text{in a neighborhood of $B_1'$}.\] 
Then, integration by parts and Green's Theorem yield that
\[
0 = \int_{B_1}  \tau \cdot \A \varphi \dd y = \int_{\partial H \cap B_1} [\mathbb A(\mathbf e_N)^T \cdot \tau] \cdot \eta \dd y',
\]
where $[\mathbb A(\mathbf e_N)^T \cdot \tau] = \mathbb A(\mathbf e_N)^T \cdot (\tau^+ - \tau^-)$. Here, 
$\tau^+$ and $\tau^-$ are the traces of $\tau$ in $\partial H$ from $B_1^+$ and $B_1^-$ respectively. 
Since $\eta$ is arbitrary, a density argument shows that
\begin{equation}\label{eq:jump}
[\mathbb A(\mathbf e_N)^T \cdot \tau] = 0 \quad \text{in $\partial H \cap B_1$, \qquad and hence \quad $\mathbb A(\mathbf e_N)^T \cdot \tau \; \in \Wrm_{\text{loc}}^{1,2}(B_1;\R^d)$}.
\end{equation}

\noindent{\bf Regularity of $\A w$.} From this point  and until the end of the proof we further assume that $\A$ is a first-order differential operator of gradient form; we may as well assume that $\partial^* A$ is locally parametrized by $\Crm^{1,\eta/2}$ functions. 

Due to Campanato's Theorem ($\Crm^{0,\eta/8} \simeq \mathrm L^{2,\,N + (\eta/4)}$ on Lipschitz domains), our goal is to show local boundedness of the map
\begin{equation}\label{eq:prima}
x \mapsto \sup_{r \le 1} \bigg\{\frac{1}{r^{N + (\eta/4)}} \int_{B_r(x) \cap A} |\A w - (\A w)_{B_r(x) \cap A}|^2 \dd y\bigg\} \qquad x \in (\Omega \setminus  (\partial A \setminus \partial^* A));
\end{equation} 
and a similar result for $A^c$ instead of $A$.

Also, since Campanato estimates in the interior are a simple consequence of 
Lemma \ref{lem:cosntant}, we may restrict our analysis to show only local boundedness at points $x \in \partial^* A$. We first prove the following decay for solutions of the half-space:

\begin{lemma}\label{lem:R} Let $\tilde w \in \Wrm^{\A}(B_1)$ be such that 
\begin{equation}\label{eq:claim}
\A^*(\sigma_H \A \tilde w) = 0 \quad \text{in $B_1$}.
\end{equation}
Then $\tilde w$ satisfies an estimate of the form
\begin{equation}\label{Calpha}
\frac{1}{\rho^{N + 2}} \int_{B_\rho} |R_H \tilde w - (R_H \tilde w)_\rho|^2 \dd y \le c(N,\sigma_1,\sigma_2) \int_{B_1} |R_H \tilde w - (R_H \tilde w)_1|^2 \dd y \qquad \text{for all $0 < \rho \le 1$},
\end{equation}
where we have defined 
\[
R_A u \coloneqq \big(\nabla'u, A_N^T( \sigma_A \A u)\big), \qquad A \subset B_1 \; \text{Borel}.
\]
\end{lemma}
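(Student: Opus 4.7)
The plan hinges on the observation that $R_H \tilde w$ is the \emph{continuous} combination of gradient data across the flat interface $\partial H \cap B_1$, so it enjoys a pointwise Lipschitz estimate globally on $B_{1/2}$. By the hypoellipticity assumption \eqref{eq:hypo}, $\tilde w$ is smooth up to $\partial H$ from each side; the last bound in \eqref{eq:halfplane} (with $k=1$) then yields $\|\nabla^2 \tilde w\|^2_{\Lrm^\infty(B^\pm_{1/2})} \le c^* \|\tilde w\|^2_{\Lrm^2(B_1)}$. The tangential derivatives $\nabla' \tilde w$ coincide from the two sides on $\partial H \cap B_1$ by continuity of the trace, while the transmission identity \eqref{eq:jump} applied to $\tau = \sigma_H \A \tilde w$ gives $[A_N^T \sigma_H \A \tilde w] = 0$ across $\partial H$. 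Consequently $R_H \tilde w$ is continuous on $B_1$ and smooth on each half with bounded first derivatives, hence $R_H \tilde w \in \Wrm^{1,\infty}(B_{1/2})$ with
\[
\|\nabla R_H \tilde w\|^2_{\Lrm^\infty(B_{1/2})} \le C\,\|\tilde w\|^2_{\Lrm^2(B_1)},
\]
for a constant $C$ depending only on $N$, $\sigma_1$, $\sigma_2$ and $\A$.

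The second ingredient is the construction of a piecewise affine shift that kills the average of $R_H \tilde w$ while leaving the PDE intact. I would take $q^\pm(x) \coloneqq L^\pm x + c$ on $B_1^\pm$, demanding that the tangential columns $L^\pm_{\cdot\,i}$ agree for $i<N$ (continuity across $\partial H$) and that the normal columns $L^\pm_{\cdot\,N}$ obey the transmission relation
\[
A_N^T \sigma_1 A_N L^+_{\cdot\,N} - A_N^T \sigma_2 A_N L^-_{\cdot\,N} = A_N^T(\sigma_2 - \sigma_1)\sum_{i<N} A_i L^+_{\cdot\,i}.
\]
The tensors $A_N^T \sigma_i A_N$ are invertible by the strong ellipticity of $\mathbb A(\mathbf{e}_N)$ (Remark \ref{rem:wave}) together with \eqref{eq:ellipticity}, so a count of free parameters shows that $R_H q$ can be matched to any prescribed constant vector in $\R^{d(N-1)} \times \R^d$; in particular, to $(R_H \tilde w)_1$. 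Such a $q$ lies in $\Wrm^{\A}(B_1)$ and solves $\A^*(\sigma_H \A q) = 0$ in $\mathcal D'(B_1;\R^d)$, since the only distributional contribution at $\partial H$ is precisely $-A_N^T[\sigma_H \A q]\,\delta_{\partial H}$, which vanishes by the transmission matching.

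Set $\hat w \coloneqq \tilde w - q$: then $\hat w$ still solves the half-space equation, $R_H \hat w = R_H \tilde w - (R_H \tilde w)_1$, and the key identity
\[
R_H \hat w - (R_H \hat w)_\rho = R_H \tilde w - (R_H \tilde w)_\rho
\]
holds on every ball $B_\rho \subset B_1$. Applying the bound from the first paragraph to $\hat w - (\hat w)_1$ (still a solution) and using the standard Poincar\'e--Sobolev inequality on balls one obtains
\[
\frac{1}{\rho^{N+2}}\int_{B_\rho}|R_H \tilde w - (R_H \tilde w)_\rho|^2\dd y \le C\,\|\nabla R_H \hat w\|^2_{\Lrm^\infty(B_{1/2})} \le C\,\|\hat w - (\hat w)_1\|^2_{\Lrm^2(B_1)} \le C\,\|\nabla \hat w\|^2_{\Lrm^2(B_1)}.
\]

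The concluding step is the pointwise estimate $|\nabla \hat w| \le C|R_H \hat w|$: trivially $|\nabla' \hat w| \le |R_H \hat w|$, while solving for $\partial_N \hat w$ via the algebraic identity
\[
\partial_N \hat w = (A_N^T \sigma_H A_N)^{-1}\Big[A_N^T \sigma_H \A \hat w - \sum_{i<N}(A_N^T \sigma_H A_i)\partial_i \hat w\Big]
\]
on each side of $\partial H$ yields $|\partial_N \hat w| \le C|R_H \hat w|$ with $C = C(\sigma_1,\sigma_2,\A)$. Integrating gives $\|\nabla \hat w\|^2_{\Lrm^2(B_1)} \le C\|R_H \hat w\|^2_{\Lrm^2(B_1)} = C\|R_H \tilde w - (R_H \tilde w)_1\|^2_{\Lrm^2(B_1)}$, chaining with the previous display to close \eqref{Calpha}. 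The main obstacle is the invariance shift: one must verify that the piecewise affine $q$ genuinely belongs to $\Wrm^{\A}(B_1)$ and solves the PDE distributionally, which rests entirely on the compatibility of the transmission condition with the fact that $A_N^T \sigma_i A_N$ is invertible on each side.
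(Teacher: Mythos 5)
Your proof is correct and leads to the same conclusion, but it is organized around a genuinely different idea than the paper's. The paper works directly with $\tilde w$: it first differentiates the equation tangentially so that $\partial_i\tilde w$ ($i<N$) solves the same transmission problem, applies the half-space decay \eqref{eq:halfplane} to get the Campanato decay for $\nabla'\tilde w$, then uses the jump relation \eqref{eq:jump} to see $g_H(\tilde w)=A_N^T\sigma_H\A\tilde w\in\Wrm^{1,2}_{\text{loc}}$, applies Poincar\'e on $B_\rho$, and closes by algebraically expressing $\nabla g_H(\tilde w)$ in terms of $\nabla(\nabla'\tilde w)$ via the equation (\eqref{heynckes}); the average $(R_H\tilde w)_1$ then enters only at the very end through the trivial containment $\nabla'\tilde w\subset R_H\tilde w$. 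You instead introduce a piecewise-affine transmission solution $q$ normalized so that $R_Hq\equiv(R_H\tilde w)_1$, reducing the problem to $\hat w=\tilde w-q$ with $(R_H\hat w)_1=0$; the Lipschitz estimate from \eqref{eq:halfplane} on $R_H\hat w$, a Poincar\'e inequality, and the pointwise algebraic bound $|\nabla\hat w|\le C|R_H\hat w|$ then close the loop. Both arguments rest on the same three ingredients (the half-space decay \eqref{eq:halfplane}, the jump relation \eqref{eq:jump}, and the invertibility of $A_N^T\sigma_iA_N$), but your normalization makes the role of the invertibility completely transparent (used once to build $q$ and once for the pointwise estimate) and sidesteps the bookkeeping of averages at each scale, at the modest cost of having to verify that $q\in\Wrm^{\A}(B_1)$ satisfies the distributional equation. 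A minor omission: your Lipschitz estimate for $R_H\hat w$ is on $B_{1/2}$, so the first inequality of your final chain requires $\rho\le 1/2$; the range $\rho\in(1/2,1]$ needs the trivial observation (as in the paper) that $\int_{B_\rho}|g-(g)_\rho|^2\le\int_{B_1}|g-(g)_1|^2$ and $\rho^{-(N+2)}\le 2^{N+2}$.
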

\begin{proof} Since for $\rho \ge 1/2$ one can use $c \coloneqq 2^{-{(N+2)}}$, we only focus on proving the estimate for $\rho \in (0,1/2)$. It is easy to verify that
$\A^* (\sigma_H \A (\partial_i \tilde w - \lambda)) = 0$ in $\mathcal D'(B_1;\R^d)$ for all $\lambda \in \R^d$, and every $i = 1,\dots,{N-1}$. 
In particular, by \eqref{eq:halfplane} we know that
\begin{equation}\label{eq:u'}
	\frac{1}{\rho^{N + 2}}\int_{B_\rho} |\partial_i \tilde w - (\partial_i \tilde w)_{\rho}|^2 \dd y \le \frac{C}{\rho^N}\int_{B_\rho} |\nabla \partial_i \tilde w|^2 \dd y \le c^*\, C \int_{B_1} |\partial_i \tilde w - (\partial_i \tilde w)_1|^2 \dd y,
\end{equation}
\text{for every $\rho \in (0,1/2)$},
and every $i = 1,\dots,N - 1$. Here, $C = C(N)$ is the standard scaled Poincar\'e constant for balls.
Summation over $i \in \{1,\dots,N-1\}$ yields an estimate of the form \eqref{Calpha} for $\nabla' \tilde w$. 

We are left to calculate the decay estimate for $g_H(\tilde w) \coloneqq A_N^T (\sigma_H \A \tilde w) = \mathbb A(\mathbf e_N) \cdot (\sigma_H \A \tilde w)$. By the hypoellipticity assumption \eqref{eq:hypo} and the jump condition \eqref{eq:jump}, we infer that 
$g_H(\tilde w) \in \Wrm_{\text{loc}}^{1,2}(B_1;\R^d)$. 

Even more, by the same Poincar\'e's inequality 
\begin{equation}\label{eq:please}
\frac{1}{\rho^{N +2}}\int_{B_\rho} |g(\tilde w) - (g(\tilde w))_\rho|^2 \dd y \le 
\frac{C}{\rho^{N}}\int_{B_\rho \setminus \partial H} |\nabla (g(\tilde w)) |^2 \dd y 
\end{equation}
for every $\rho \in (0,1/2)$.
On the other hand, it follows from the equation in $(B_1 \setminus \partial H)$ and  \eqref{heynckes} that one may write $\nabla g(\tilde w)$ in terms of $\nabla (\nabla' \tilde w)$ for almost every $x \in (B_r \setminus \partial H)$. We may then find a constant $C' = C'(\sigma_1,\sigma_2,\A)$ such that
\begin{equation*}
 |\nabla g(\tilde w(x))|^2\le C' |\nabla (\nabla' \tilde w)(x)|^2 \qquad \text{for every $x \in (B_\rho \setminus \partial H)$}.
\end{equation*}
Using the same calculation as in the derivation of \eqref{eq:u'}, it follows from \eqref{eq:please} that
\begin{align*}
\frac{1}{\rho^{N +2}}\int_{B_\rho} |g(\tilde w) - (g(\tilde w))_\rho|^2 \dd y & 
\le c^*\, C\, C' \int_{B_1} |\nabla' \tilde w -  (\nabla' \tilde w)_1|^2 \dd y \\
&
\le c^*\, C\, C' \int_{B_1} |R_H\tilde w - (R_H\tilde w)_1|^2 \dd y,
\end{align*}
for every $\rho \in (0,1/2)$.
The assertion follows by letting $c(N,\sigma_1,\sigma_2) \coloneqq c^*\, C\max\{1,C'\}$.
\end{proof}
The next corollary can be inferred from \eqref{Calpha} by following the strategy of Lin in \cite[pp 166-167]{Lin93}:

\begin{corollary}\label{cor:R}
Let $\tilde w \in \mathrm W^{\A}(B_2)$ solve the equation
\begin{equation}\label{porfavor}
\A^*(\sigma_A \A u) = f \quad \text{in $B_2$}, \qquad \text{with \quad $\|\tilde w \|_{\Lrm^2(B_2)} \le 1$ and $\| f \|_{\Lrm^\infty(B_2)} \le 1$ },
\end{equation}
where $A \coloneqq \{\; x \in B'_2 \times \R \; : \; x_N > \varphi(x')\; \}$ for some function $\varphi \in \Crm^{1,\eta/2}(B_2')$ with $\varphi(0) = |\nabla \varphi|(0) = 0$, and $\| \varphi \|_{\Crm^{1,\eta/2}(B_2')} \le 1$.
Then there exist positive constants $\theta(N,\sigma_1,\sigma_2) \in (0,1/2)$, and $C(N,\sigma_1,\sigma_2)$ such that either 
\begin{equation}\label{eq:b1}
\frac{1}{\theta^{N +1}}\int_{B_\theta} |R_A \tilde w - (R_A \tilde w)_\theta|^2 \dd y \le \int_{B_{1}} 
	 |R_A \tilde w - (R_A \tilde w)_{1}|^2 \dd y,
\end{equation}
or
\begin{equation}\label{eq:b2}
\int_{B_\theta} |R_A \tilde w - (R_A \tilde w)_\theta|^2 \dd y \le C\bigg(\|\varphi \|_{\Crm^{1,\eta/2}(B_1')} + \|f\|^2_{\Lrm^\infty(B_1)} \bigg).
\end{equation} 
\end{corollary}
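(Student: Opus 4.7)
The plan is to prove the dichotomy by a blow-up/contradiction argument, with Lemma \ref{lem:R} furnishing the limiting half-space regularity. Fix $\theta \in (0,1/2)$ to be chosen at the end, and suppose the claim is false: for each $h \in \mathbb N$ produce $(\tilde w_h, \varphi_h, f_h, A_h)$ satisfying the hypotheses of the corollary but violating both alternatives with $C = h$. Set $\eps_h^2 \coloneqq \int_{B_1} |R_{A_h} \tilde w_h - (R_{A_h} \tilde w_h)_1|^2 \dd y$. Combining the two negated inequalities forces $\|\varphi_h\|_{\Crm^{1,\eta/2}(B_1')} + \|f_h\|_{\Lrm^\infty(B_1)}^2 \le \eps_h^2/h$, so after dividing by $\eps_h$ the normalized sources $\eps_h^{-1} f_h$ tend to zero in $\Lrm^\infty$ and $\partial A_h \to \partial H$ in $\Crm^{1,\eta/2}$.

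Choose a suitable affine correction $\ell_h$ and set $v_h \coloneqq (\tilde w_h - \ell_h)/\eps_h$ so that $v_h$ is uniformly bounded in $\Wrm^{1,2}(B_1)$ and $\int_{B_1}|R_{A_h}v_h - (R_{A_h}v_h)_1|^2\dd y = 1$. The bound on the variance of $R_{A_h}\tilde w_h$ controls the variance of $\nabla' \tilde w_h$ and, using invertibility of $A_N^T \sigma_{A_h} A_N$ from the strong ellipticity \eqref{eq:strongly elliptic systems} of $\A$, also of $\partial_N \tilde w_h$; Poincaré then yields the $\Wrm^{1,2}$ bound. Flatten the interfaces via the diffeomorphism $\Psi_h(x', x_N) = (x', x_N - \varphi_h(x'))$, which converges to the identity in $\Crm^{1,\eta/2}$. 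The transformed equation $\A^*(\sigma_{A_h}\A v_h) = \eps_h^{-1} f_h$, pulled back through $\Psi_h$, has coefficients converging to $\sigma_H$ and vanishing right-hand side; a weak $\Wrm^{1,2}$ limit $\tilde v = v \circ \Psi^{-1}$ exists, and by the compensated compactness Lemma \ref{compensated} satisfies $\A^*(\sigma_H \A \tilde v) = 0$ in $B_1$.

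The hard part is the strong $\Lrm^2(B_\theta)$ convergence $R_{A_h}v_h \to R_H \tilde v$ across the moving interface $\partial A_h$. Interior compactness of $\A v_h$ on each side of $\partial A_h$ follows from the Caccioppoli inequality (Lemma \ref{rem:cac}) combined with Rellich's theorem. Tangential compactness of $\nabla' v_h$ follows from tangential difference-quotient estimates, which commute with the jump across $\partial A_h$. For the quantity $A_N^T \sigma_{A_h}\A v_h$, the jump condition \eqref{eq:jump} transported to the flattened geometry ensures no interfacial jump, so this quantity lies in $\Wrm^{1,2}_\loc$ of the full ball with uniform norm, hence is compact in $\Lrm^2$ by Rellich. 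Together, these three ingredients yield the required strong convergence of $R_{A_h} v_h$ in $\Lrm^2(B_\theta)$.

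With strong convergence in hand, the negated first alternative passes to the limit as $\int_{B_\theta}|R_H\tilde v - (R_H\tilde v)_\theta|^2 \dd y \ge \theta^{N+1}$, while Lemma \ref{lem:R} applied to $\tilde v$ gives $\int_{B_\theta}|R_H\tilde v - (R_H\tilde v)_\theta|^2 \dd y \le c(N,\sigma_1,\sigma_2)\,\theta^{N+2}\int_{B_1}|R_H\tilde v - (R_H\tilde v)_1|^2 \dd y \le c\,\theta^{N+2}$. Combined, $c\,\theta \ge 1$; choosing $\theta < 1/c$ at the outset delivers the contradiction and closes the argument.
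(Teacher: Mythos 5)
Your compactness/contradiction argument is essentially the strategy the paper intends: the text does not give a proof of Corollary \ref{cor:R}, but explicitly defers to the excess-decay argument of Lin (\cite[pp~166--167]{Lin93}), and your blow-up --- normalize by the variance of $R_{A_h}\tilde w_h$, let the interface flatten, pass to a limit solving the half-space problem, apply Lemma \ref{lem:R}, choose $\theta < 1/c$ --- is precisely that method. The bookkeeping you do at the start (combining the two negations to force $\|\varphi_h\|_{\Crm^{1,\eta/2}} + \|f_h\|^2_{\Lrm^\infty} \le \eps_h^2/h$) and the final contradiction $\theta^{N+1} \le c\theta^{N+2}$ are both correct.

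One place where the write-up is thinner than the difficulty warrants is the strong $\Lrm^2(B_\theta)$ convergence of the second component $A_N^T \sigma_{A_h}\A v_h$. Two points deserve care. First, the quantity whose jump vanishes across $\partial A_h$ is $\mathbb A(\nu_{A_h})^T \sigma_{A_h}\A v_h$, not $\mathbb A(\mathbf e_N)^T\sigma_{A_h}\A v_h$; you correctly invoke the flattening $\Psi_h$, and one must verify that after pull-back the flux whose normal jump vanishes agrees with $R_{A_h}v_h$ only up to an $O(\|\varphi_h\|_{\Crm^{1,\eta/2}})$ error, which then tends to zero. Second, obtaining a uniform $\Wrm^{1,2}_{\text{loc}}$ bound for the (pushed-forward) flux requires first the tangential difference-quotient estimates in the flattened coordinates, and then using the equation to recover the normal derivative of the flux from the tangential ones; this chain is standard for transmission problems but is the actual content of the step, and the present phrasing (``the jump condition ensures \ldots{} hence compact by Rellich'') compresses it considerably. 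Neither of these is a wrong idea --- both are the right ingredients --- but an examiner would want them spelled out rather than asserted.

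A minor point: the affine correction $\ell_h$ should be chosen with $\A\ell_h=0$ (a polynomial of degree $<k$) so that $R_{A_h}\ell_h$ does not pick up a $\sigma_{A_h}$-dependent jump; otherwise $R_{A_h}(\tilde w_h - \ell_h)$ acquires an extra step function and the normalization $\eps_h^2 = \int_{B_1}|R_{A_h}\tilde w_h - (R_{A_h}\tilde w_h)_1|^2$ is perturbed. With that caveat the normalization goes through because the Poincar\'e inequality and the variance bound on $R_{A_h}v_h$ control $\|v_h - \ell_h\|_{\Wrm^{k,2}(B_1)}$ uniformly.
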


We are now in the position to prove \eqref{eq:prima}. Let $\delta \in (0,\eta/2)$ and let $(w,A)$ be solution of problem $\eqref{P}$. Since local regularity properties of the pair $(w,A)$ are inherited to any (possibly rotated and translated) re-scaled pair $(w^{x,r},A^{x,r})$ -- as defined in \eqref{blow2}, where in particular the source $f^{x,r}$ tends to zero -- with $r \le \dist(x,\partial \Omega)$,
we may do the following assumptions without any loss of generality: $B_4 \subset \Omega$ and $x = 0 \in \partial^* A$, $\partial A^*$ is parametrized in $B_{2}$ by a function $\varphi \in \Crm^{1,\eta/2}(B_{2}')$ such that $\varphi(0) = |\nabla \varphi(0)| = 0$, and $\|\varphi\|_{\Crm^{1,\eta/2}(B_{2}')}, \|f\|_{\Lrm^\infty(B_2;\R^d)} \le \min\{1,\kappa^*\}$ where $\kappa^* = \kappa^*(\delta,N,M)$ is the constant of Remark \ref{improved}.
Additionally, since $(w,A)$ is a solution of problem \eqref{P}, we know that
\begin{equation}\label{joder}
\A^* (\sigma_{A} \A w) = f \qquad \text{in $B_2$},\end{equation}
and
\begin{equation}\label{memo}
\frac{1}{r^{N - \delta}} \int_{B_r} |\A w|^2 \dd y \le C_\delta\big(\|\A w\|^2_{\Lrm^2(B_2)} + \| f \|^2_{\Lrm^\infty(B_1)}\big) \qquad \text{for every $r \in (0,1)$},
\end{equation}
where $C_\delta(N,M)$ is the constant from 
Remark $\ref{improved}$.

Notice that the rescaled functions\footnote{Here, $\nu_r$ is the $\A$-free corrector function for $w$ in $B_r$, see Definition \ref{gto}.} $w^r(y) \coloneqq (w(ry) - v_r(ry))/r^{1 - (\delta/2)}$ and $\varphi^r(y) \coloneqq \varphi(ry)/r$ 
still solve \eqref{joder}  for $f^r(y) \coloneqq r^{1 + (\delta/2)} f(ry)$ and $A^r \coloneqq A/r$ with $\|\varphi^r\|_{\Crm^{1,\eta/2}(B_{2}')}, \newline\|f^r\|_{\Lrm^\infty(B_2;\R^d)} \le \min\{1,\kappa^*\}$. In particular, by \eqref{memo} and Poincar\'e's inequality
\[
 \|w^r\|^2_{\Lrm^2(B_1)} \le C(B_1)\|\A w^r\|^2_{\Lrm^2(B_1)} < \overline C \coloneqq C(B_1)C_\delta\big(\|\A w\|^2_{\Lrm^2(B_2)} + 1\big).
\] 
Thus
Recall also that $\|\varphi^r\|_{\Crm^{1,\eta/2}(B_1')}$ scales as $r^{\eta/2}\|\varphi\|_{\Crm^{1,\eta/2}(B_r')}$ and, in view of its definition, $\| f^r\|^2_{\Lrm^\infty(B_1)}$ scales as $r^{2 + \delta}$.
In view of these properties, we are in position to apply {Corollary \ref{cor:R}} to $w^r/\max\{1,\overline C^{1/2}\}$: We infer that either
\begin{equation}\label{eq:R1}
\frac{1}{\theta^{N +1}}\int_{B_{\theta}} |R_{A^r}   w^r - (R_{A^r}   w^r)_{\theta}|^2 \dd y \le \int_{B_{1}} 
	 |R_{A^r}   w^r- (R_{A^r}  w^r)_{1}|^2 \dd y,
\end{equation}
or
\begin{equation}\label{eq:R2}
\int_{B_{\theta}} |R_{A^r} w^r - (R_{A^r} w^r)_{\theta}|^2 \dd y \le \max\{1,
\overline C\}\cdot C(N,\sigma_1,\sigma_2)\bigg(\|\varphi^r\|_{\Crm^{1,\eta/2}(B_1')} + r^{2 + \delta}\bigg),
\end{equation}
where $\theta = \theta(N,\sigma_1,\sigma_2) \in (0,1/2)$ is the constant from Corollary \ref{cor:R}.

 It is not difficult to verify, with the aid of the Iteration Lemma \ref{iteration}, that re-scaling in \eqref{eq:R1} and \eqref{eq:R2} conveys a decay of the form
\begin{equation}\label{joder22}
\frac{1}{r^{N + \eta/2 - \delta}}\int_{B_{r}} |R_A  (w - \nu_r) - (R_A   (w - \nu_r))_{r}|^2 \dd y  \le
c' \qquad \text{for all $r \in (0,1)$},
\end{equation}
and some constant $c' = c'(\delta,N, \sigma_1,\sigma_2,\|\A w\|_{\Lrm^2(B_2)}$).

The last step of the proof consists in showing that $R_A  (w - \nu_r)$ dominates $\nabla  (w - \nu_r)$. By the definition of $R_A$, it is clear that $|\nabla'  (w - \nu_r)(x) - (\nabla'  (w - \nu_r))_{B_r \cap A}|^2 \le |R_A  (w - \nu_r)(x) - (R_A  (w - \nu_r))_{B_r \cap A}|^2$ for all $x \in B_1$ and every $r \in (0,1)$. We show a similar estimate for $\partial_N  (w - \nu_r)$:

 The pointwise G{\aa}rding inequality \eqref{eq:ellipticity} and \eqref{eq:strongly elliptic systems} imply, in particular, that the tensor $(\mathbb A(\mathbf e_N)^T \sigma_1 \; \mathbb A(\mathbf e_N)) = (A_N^T \sigma_1 A_N) \in \text{Lin}(\R^d;\R^d)$ is invertible  (use, e.g., Lax-Milgram in $\R^d$). Hence,
\begin{align}\label{rhs}
\partial_N  (w - \nu_r) = & (A_N^T \sigma_1 A_N)^{-1}\bigg(
g(w - \nu_r) \; -
\sum_{j \neq N} (A_N^T \sigma_1 \; A_j) \partial_j  (w - \nu_r) 
\bigg) \qquad \text{in $B_1 \cap A$},
\end{align}
from where we deduce that
\begin{align*}
\frac{1}{r^{N + (\eta/2) - \delta}}\int_{B_r \cap A} |\partial_N   &(w - \nu_r) - (\partial_N  (w - \nu_r))_{B_r \cap A} |^2 \dd y \le \\ & \frac{c''}{r^{N + (\eta/2) - \delta}} \int_{B_r \cap A} |R_A  (w - \nu_r) - (R_A  (w - \nu_r))_{B_r \cap A}|^2 \dd y
\end{align*}
for some constant $c'' = c''(\sigma_1) \ge 1$ bounding the right hand side of \eqref{rhs} in terms of $\nabla' w$ and $g(w)$.

By \eqref{joder2} and the estimate above we obtain
\begin{align*}
\frac{1}{r^{N + (\eta/2) - \delta}}\int_{B_r \cap A} & |\A  w - (\A  (w))_{B_r \cap A} |^2 \dd y = \\ &
\frac{1}{r^{N + (\eta/2) - \delta}}\int_{B_r \cap A} |\A  (w - \nu_r) - (\A  (w - \nu_r))_{B_r \cap A} |^2 \dd y \\
& \le \frac{C(\A)}{r^{N + (\eta/2) - \delta}} \int_{B_r \cap A} |\nabla  (w - \nu_r) - (\nabla  (w - \nu_r))_{B_r \cap A} |^2 \dd y \\
& \le \overline c(N,\sigma_1,\sigma_2,\|\A w\|_{\Lrm^2(B_2)}) \coloneqq C(\A) \cdot c' \cdot c'',
\end{align*}
for every $r \in (0,1)$. The assertion follows by taking $\delta = \eta/4$.

Notice that the dependence on $\| \A w \|_{\Lrm^2(B_2)}$ is local since we assumed $B_4 \subset \Omega$; this means that in general we may not expect a uniform boundedness of the decay. Similar bounds for $A$ replaced by $A^c$ can be derived by the same method. 

\end{proof}
\begin{remark}[regularity I] In general, for a $k$-th order operator $\A$ of gradient form, the only feature required to prove the regularity of $\nabla^k w$ up to the boundary $\partial^*A$ by the same methods as for first-order operators of gradient form is to obtain an analog of Lemma \ref{lem:R} (and its Corollary \ref{cor:R}) for higher-order operators. 

More specifically, if $\tilde w \in \Wrm^{\A}(B_1)$ is a solution of the equation 
\[
\A^*(\sigma_H \A u) = 0 \quad \text{in $B_1$},
\]
then $\tilde w$ satisfies an estimate of the form
\begin{equation}\label{Calpha2}
\frac{1}{\rho^{N + 2}} \int_{B_\rho} |R_H \tilde w - (R_H \tilde w)_\rho|^2 \dd y \le c(N,\sigma_1,\sigma_2) \int_{B_1} |R_H \tilde w - (R_H \tilde w)_1|^2 \dd y \qquad \text{for all $0 < \rho \le 1$},
\end{equation}
where
\[
R_A u \coloneqq \big(\nabla'u, \mathbb A(\mathbf e_N)^T (\sigma_A \A u)\big), \qquad A \subset B_1.
\]

Unfortunately, for $2k$-order systems of elliptic equations (with $k > 1$) it is not clear to us whether one can prove such decay estimates by standard methods. While a decay estimate for $\nabla^{k-1}(\nabla' u)$ can be shown by the very same method as the one in the proof of Theorem \ref{main}, the main problem centers in proving a decay estimate for the term $\mathbb A(\mathbf e_N)^T (\sigma \A u) \in \Wrm^{1,2}(B_1)$ -- cf. \eqref{eq:jump}. Technically, the issue is that one cannot use the equation on half-balls to describe $\partial^{(0,\dots,0,k)} u$ in terms of $\nabla^{k-1}(\nabla' u)$. 
\end{remark}

\begin{remark}[regularity II - linear plate theory]
In the particular case of models in linear plate theory ($\A = \nabla^2, N= 2$, and $d = 1$) it is possible to show a decay estimate as in \eqref{Calpha2} for solutions $w \in \Wrm^{2,2}_0(B_2)$ of the equation
\[
\nabla \cdot \nabla (\sigma_H \nabla^2 u) = 0.
\]

By Remark \ref{rem:plate},
there exists a field $w \in W^{1,2}(B_2;\R^2)$ which turns out to be a solution of the equation 
\[
\nabla \cdot (\mathbf S_H	\, \mathcal E w) = 0,
\]
where $\mathbf S$ is a positive fourth-order symmetric tensor such that $\sigma_H(x) = \mathbf R_\perp \mathbf S_H^{-1}(x) \mathbf R_\perp$; furthermore, $\mathbf R_\perp \mathcal E w =\mathbf \sigma_H \nabla^2 u$. 
Since $\A = \nabla^2$, it is easy to verify that $A_\alpha= A_{(i,j)} = \mathbf e_i \otimes \mathbf e_j$ for $i,j \in \{1,2\}$, a simple calculation shows that
\[
g_H(u) \coloneqq \mathbb A (\mathbf{e}_N)^T  (\sigma_H\A u) = (\sigma_H \nabla^2 u)_{22} = 
(\mathbf R_\perp \mathcal E w)_{22} = \partial_1w^1;
\] 
and thus, since $\mathcal E$ is an operator of gradient form of order one, it follows form the proof of Theorem \ref{main} that an estimate of the form \eqref{Calpha2} indeed holds for $g_H(u)$.
\end{remark}

\subsection*{Acknowledgments} 
 I wish to extend many thanks to Prof. Stefan M\"uller for his advice and fruitful discussions in this beautiful subject. I would also like to thank the reviewer and the editor for their patience and care which  derived in the correct formulation of Theorem \ref{main}.
The support of the University of Bonn and the Hausdorff Institute for Mathematics is gratefully acknowledged. The research conducted in this paper forms part of the author's Ph.D. thesis at the University of Bonn.

\section{Glossary of constants}

\begin{tabular}{l l}
	$N   \qquad$  & spatial dimension \\
	$M$  & coercivity and bounding constant for the tensors $\sigma_1$ and $\sigma_2$ (as quadratic forms) \\
	$K$  & an arbitrary compact set in $\Omega$ \\
	$\lambda_K$ & local upper bound constant
\end{tabular}\\

\noindent {\bf Other constants:} groups of constants are numbered in non-increasing order, e.g., $c_1^* \ge c_2^* \ge c_3^*$. The following constants play an important role in our calculations:
\begin{center}
	\begin{tabular}{|c|c|c|} \hline
		Constant & Dependence & Description \\ \hline
		$\theta_1$ & arbitrary in $(0,1/2)$ &  ratio constant	\\
		$c_1$ & $\theta_1,N,M$ &  universal constant \\ 
		$\epsilon_1$ & $\theta_1, N, M, \theta_1$ & smallness of perimeter density \\
		$c_2$ & $M$ & universal constant \\ 
		$\gamma$ & $N,M$ & universal constant \\
		$\theta_2$ & $N,M$ & universal constant \\
		$\varepsilon_2$ &  $N,M$ & smallness of excess energy \\ 
		$c^*_1$ & $\lambda_K, N$ & constant in the Height bound Lemma  \\ 
		$\theta_1^*$ & arbitrary in $(0,1/2)$ &  ratio constant	\\
		$c_2^*$ & $\theta_1^*, N,M$ & universal constant \\ 
		$\epsilon_2^*$ & $\theta^*_1,N, M$ & smallness of flatness excess  \\
		$c_3^*$ & $K,N,M,f$ & flatness excess improvement scaling constant \\
		$\varepsilon_3^*$ & $K,N,M,f$ &  smallness of flatness excess \\  \hline
	\end{tabular}
\end{center}

\bibliographystyle{spmpsci}      
\bibliography{Biblio}   


\end{document}